\newcommand{\pushright}[1]{\ifmeasuring@#1\else\omit\hfill$\displaystyle#1$\fi\ignorespaces}
\newcommand{\pushleft}[1]{\ifmeasuring@#1\else\omit$\displaystyle#1$\hfill\fi\ignorespaces}
\newtheorem{thm}[subsection]{Theorem}
\newtheorem*{defn}{Definition}
\newtheorem{corollary}[subsection]{Corollary}
\newtheorem{lemma}[subsection]{Lemma}
\newtheorem{remark}[subsection]{Remark}
\newcommand{\isom}{\cong}
\newcommand{\Q}{\mathbb Q}
\newcommand{\Z}{\mathbb Z}
\renewcommand{\O}{\mathcal{O}}
\renewcommand{\P}{\mathbb P}
\renewcommand{\phi}{\varphi}
\newcommand{\Spec}{\mathrm{Spec}\ }
\DeclareMathOperator{\Art}{Art}
\DeclareMathOperator{\Gal}{Gal}
\DeclareMathOperator{\Aut}{Aut}
\DeclareMathOperator{\wt}{wt}
\DeclareMathOperator{\Pic}{Pic}
\DeclareMathOperator{\disc}{disc}
\DeclareMathOperator{\cha}{char}
\title[Conductors and minimal discriminants]{Conductors and minimal discriminants of hyperelliptic curves with rational Weierstrass points}
\author{Padmavathi Srinivasan}
\begin{document}
 
\date{\today}

\begin{abstract}
 Let $C$ be a hyperelliptic curve of genus $g$ over the fraction field $K$ of a discrete valuation ring $R$. Assume that the residue field $k$ of $R$ is perfect and that $\cha k \neq 2$. Assume that the Weierstrass points of $C$ are $K$-rational. Let $S = \Spec R$. Let $\mathcal{X}$ be the minimal proper regular model of $C$ over $S$. Let $\Art (\mathcal{X}/S)$ denote the Artin conductor of the $S$-scheme $\mathcal{X}$ and let $\nu (\Delta)$ denote the minimal discriminant of $C$. We prove that $-\Art (\mathcal{X}/S) \leq \nu (\Delta)$. As a corollary, we obtain that the number of components of the special fiber of $\mathcal{X}$ is bounded above by $\nu(\Delta)+1$.   
\end{abstract}

\maketitle

\section{Introduction}
\subsection{}
Let $R$ be a discrete valuation ring with perfect residue field $k$. Let $K$ be the fraction field of $R$. Let $\nu \colon K \rightarrow \Z \cup \{ \infty \}$ be the corresponding discrete valuation. Let $C$ be a smooth, projective, geometrically integral curve of genus $g \geq 1$ defined over $K$. Let $S = \Spec R$. Let $X$ be a proper, flat, regular $S$-scheme with generic fiber $C$. The Artin conductor associated to the model $X$ is given by
\[ \Art (X/S) =  \chi(X_{\overline{K}}) - \chi(X_{\overline{k}}) - \delta ,\]
where $\chi$ is the Euler-characteristic for the \'etale topology and $\delta$ is the Swan conductor associated to the $\ell$-adic representation $\Gal (\overline{K}/K) \rightarrow \Aut_{\Q_\ell} (H^1_{\mathrm{et}}(X_{\overline{K}}, \Q_\ell))$ ($\ell \neq \cha k$). The Artin conductor is a measure of degeneracy of the model $X$; it is a non-positive integer that is zero precisely when $X/S$ is smooth or when $g=1$ and $(X_k)_{\mathrm{red}}$ is smooth. If $X/S$ is a regular, semistable model, then $X/S$ equals the number of singular points of the special fiber $X_k$. Let $\Art (C/K)$ denote the Artin conductor associated to the minimal proper regular model of $C$ over $R$.

For hyperelliptic curves, there is another measure of degeneracy defined in terms of minimal Weierstrass equations. Assume that $C$ is hyperelliptic and that $\cha K \neq 2$. An integral Weierstrass equation for $C$ is an equation of the form $y^2+Q(x)y = P(x)$ with $P(x),Q(x) \in R[x]$, such that $C$ is birational to the plane curve given by this equation. The discriminant of such an equation is the non-negative integer $\nu(2^{-4(g+1)} \disc(4P(x)+Q(x)^2))$. A minimal Weierstrass equation is an equation for which the integer $\nu(2^{-4(g+1)} \disc(4P(x)+Q(x)^2))$ is as small as possible amongst all integral equations. The corresponding integer $\nu(\Delta)$ is the minimal discriminant. The minimal discriminant of $C$ is zero precisely when the minimal proper regular model of $C$ is smooth over $S$.

When $g=1$, we have $-\Art (C/K) = \nu(\Delta)$ by the Ogg-Saito formula \cite[p.156, Corollary 2]{saito2}. When $g=2$ and $\cha k \neq 2$, Liu~\cite[p.52, Theoreme 1 and p.53, Theoreme 2]{liup} shows that $-\Art (C/K) \leq \nu(\Delta)$; he also shows that equality can fail to hold. Our main result is an extension of Liu's result to hyperelliptic curves of arbitrary genus under the hypothesis that the Weierstrass points are rational.
\begin{thm}\label{main}
 Let $R$ be a discrete valuation ring with perfect residue field $k$. Assume that $\cha k \neq 2$. Let $K$ be the fraction field of $R$. Let $K^{\mathrm{sh}}$ denote the fraction field of the strict Henselization of $R$. Let $C$ be a hyperelliptic curve over $K$ of genus $g$. Let $\nu \colon K \rightarrow \Z \cup \{\infty\}$ be the discrete valuation on $K$. Assume that the Weierstrass points of $C$ are $K^{\mathrm{sh}}$-rational. Let $S = \Spec R$ and let $\mathcal{X}/S$ be the minimal proper regular model of $C$. Let $\nu(\Delta)$ denote the minimal discriminant of $C$. Then,
 \[ -\Art (\mathcal{X}/S) \leq \nu(\Delta) .\]
\end{thm}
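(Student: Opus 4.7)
The plan is to bound $-\Art(\mathcal{X}/S)$ by constructing an explicit regular model $Y$ that dominates $\mathcal{X}$, bounding $-\Art(Y/S)$ by $\nu(\Delta)$ directly from the minimal equation, and then using minimality of $\mathcal{X}$.

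First I would reduce to the case where $R$ is strictly Henselian: both $\Art(\mathcal{X}/S)$ and $\nu(\Delta)$ are invariant under unramified base change, and after this reduction all Weierstrass points are $K$-rational, so a minimal integral equation takes the form $y^2 = f(x) = c\prod_{i=1}^{2g+2}(x-\alpha_i)$ with $\alpha_i \in K$. Next I would observe that under these hypotheses $C$ has semistable reduction: since $\cha k \neq 2$ the group scheme $J(C)[2]$ is \'etale, and since it is generated by divisor classes supported on the Weierstrass points it is unramified, so by Grothendieck's criterion $J(C)$ (and hence $C$, by Deligne--Mumford) has semistable reduction. Consequently the Swan conductor $\delta$ vanishes and the Artin conductor simplifies to the purely topological expression $-\Art(\mathcal{X}/S) = \chi(\mathcal{X}_{\overline k}) - \chi(C_{\overline K})$.

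Next I would construct a regular model $Y \to S$ by starting with the Weierstrass model $W \to S$ (the normal double cover of $\P^1_S$ branched along $f = 0$) and resolving its singularities. These singularities are controlled by the combinatorial ``cluster picture'' of the roots---the tree of subsets of $\{\alpha_i\}$ organized by pairwise valuations $\nu(\alpha_i - \alpha_j)$---each cluster corresponding to a family of Weierstrass points that collide modulo the uniformizer. The resolution produces a regular $Y$ whose components, multiplicities, and intersection data are explicit functions of the cluster data. I would then compute $-\Art(Y/S) = \chi(Y_{\overline k}) - \chi(C_{\overline K})$ combinatorially and compare it with
\[ \nu(\Delta) = (4g+2)\nu(c) + 2\sum_{i<j}\nu(\alpha_i - \alpha_j) \]
(corrected by $\nu(2)$ in mixed characteristic), verifying $-\Art(Y/S) \leq \nu(\Delta)$ by induction on the cluster tree, matching each newly introduced component of $Y_{\overline k}$ against a corresponding contribution to the pairwise-valuation sum.

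Finally, because $\mathcal{X}$ is the unique minimal proper regular model, $Y$ admits a morphism to $\mathcal{X}$ realized as a sequence of blow-ups of smooth closed points (each reversing a contraction of a $(-1)$-curve); with $R$ strictly Henselian and $k$ algebraically closed, each such blow-up adds exactly $1$ to $-\Art$, so $-\Art(\mathcal{X}/S) \leq -\Art(Y/S) \leq \nu(\Delta)$. The hard part will be the third step: describing $Y$ and its special fiber explicitly for an arbitrary cluster picture and pushing through the inductive inequality uniformly in $g$, since nested clusters produce chains of rational components whose Euler-characteristic contributions must be carefully balanced against the pairwise-valuation contributions. This combinatorial bookkeeping is the heart of the extension of Liu's genus-$2$ case analysis to arbitrary genus.
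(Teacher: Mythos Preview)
Your overall strategy coincides with the paper's: pass to the strict Henselization, build an explicit regular model from the combinatorics of how the roots $\alpha_i$ collide (your ``cluster picture'' is exactly the paper's rooted tree $T_B$ attached to the iterated blow-up $\mathrm{Bl}_n(\P^1_R)$), compare $-\Art$ of that model to $\nu(\Delta)=\sum_{i<j}2\nu(\alpha_i-\alpha_j)$ by matching local contributions vertex by vertex, and then descend to the minimal model using $-\Art(X/S)=n(X_s)-1+\tilde f$ with $\tilde f$ model-independent. The ``hard part'' you flag is precisely what the paper carries out via the functions $d(v)$, $D(v)$, $D'(v)$, $D''(v)$ and the inequality $D''(v)\le d(v)$.

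There is, however, a genuine error in your second paragraph: the claim that $C$ has semistable reduction is false. Grothendieck's criterion requires the inertia to act trivially on $A[n]$ for some $n\ge 3$ prime to $\cha k$; trivial action on $A[2]$ is not enough, because $-1\in\mathrm{GL}(T_\ell A)$ acts trivially on $A[2]$ without being unipotent. A concrete counterexample already in genus $1$: for $p>3$, the curve $y^2=x(x-p)(x-2p)$ over $\Q_p$ has all Weierstrass points rational yet has additive reduction of Kodaira type $I_0^*$. In higher genus the same phenomenon persists, and indeed the paper's explicit regular model has components of multiplicity $2$ (Lemma~\ref{construction}(b)), which can survive in the minimal model.

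This matters in two places. First, your deduction of $\delta=0$ via semistability is invalid; the correct argument (used in the paper) is Saito's theorem that $\delta=0$ whenever every component multiplicity is prime to $\cha k$, which applies here since the multiplicities are all $1$ or $2$ and $\cha k\neq 2$. Second, and more seriously for the combinatorial step, your resolved model $Y$ will not be semistable: the Euler-characteristic bookkeeping must include the terms $(1-m_i)\chi(\Gamma_i)$ and $(m_j-1)\Gamma_i.\Gamma_j$ coming from multiplicity-$2$ components, not just the nodal term $\sum_{i<j}\Gamma_i.\Gamma_j$. These extra terms are exactly what produce the odd/even dichotomy in the paper's local formulas (Theorem~\ref{localDel}) and the need for the corrections $E(v)$ and $D''(v)$; a purely semistable accounting would miss them.
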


The method of proof is different from the one adopted by Liu in the case of genus $2$ curves. In \cite{saito2}, Saito proves that for a proper regular model $X$ of a smooth curve, $-\Art (X/S)$ equals a certain discriminant defined by Deligne in terms of powers of the relative dualizing sheaf $\omega_{X/S}$. Liu compares the Deligne discriminant of the minimal proper regular model and the minimal discriminant by comparing both of them to a third  discriminant that he defines, that is specific to genus $2$ curves \cite[p.56, Definition 1, p.52, Theoreme 1 and p.53, Theoreme 2]{liup}. In fact, he obtains an exact formula for the difference that can be computed using the Namikawa-Ueno classification of fibers in a pencil of curves of genus $2$ \cite{namiueno}. Since the number of possibilities for the special fiber in a family of curves grows very quickly with the genus (there are already over $120$ types for genus $2$ curves!), we cannot hope to use an explicit classification result and a case by case analysis 
to compare the Deligne discriminant and the minimal discriminant. 

We instead proceed by constructing an explicit proper regular model for the curve $C$ (Section~\ref{construct}). We can immediately reduce to the case where $R$ is a Henselian discrete valuation ring with algebraically closed residue field. 
We may then write a minimal Weierstrass equation for our curve of the form $y^2-f(x)$ where $f$ is a monic polynomial in $R[x]$ that splits completely. If the Weierstrass points of $C$ specialize to distinct points of the special fiber, then the usual compactification of the plane curve $y^2-f(x)$ in weighted projective space over $R$ is already regular. In the general case, we iteratively blow up $\P^1_R$ until the Weierstrass points have distinct specializations. After a few additional blow-ups, we take the normalization of the resulting scheme in the function field of the curve $C$. This gives us a proper regular model for the curve $C$ (Theorem~\ref{Xisregular}) (not necessarily minimal). 

We have the relation $-\Art(X/S) = n(X_s)-1+\tilde{f}$ for a regular model $X$ of the curve $C$, where $n(X_s)$ is the number of components of the special fiber of $X$ and $\tilde{f}$ is an integer that depends only on the curve $C$ and not on the particular regular model chosen. This tells us that to bound $-\Art (X/S)$ for the minimal proper regular model from above, it suffices to bound $-\Art (X/S)$ for some regular model for the curve from above.

In Section~\ref{explicitformula}, we give an explicit formula for the Deligne discriminant for the model we have constructed. After a brief interlude on dual graphs in Section~\ref{dualgraphs}, we restate the formula for the Deligne discriminant using dual graphs (Section~\ref{dualgraphs}). This formula tells us that the Deligne discriminant decomposes as a sum of local terms, indexed by the vertices of the dual graph of the special fiber of the regular model we constructed (Section~\ref{DDanddual}). In Section~\ref{description}, we give a description of the rest of the strategy to prove the main theorem using this formula. The additional ingredients that are necessary are a decomposition of the minimal discriminant into a sum of local terms (Section~\ref{breakupnaive}) and explicit formulae for the local terms in the Deligne discriminant in terms of dual graphs (Section~\ref{localcontribution}). In Section~\ref{comparison}, we show how to compare the Deligne discriminant for the model we have constructed 
and the minimal discriminant locally. To finish the proof, we sum the inequalities coming from all the local terms to obtain $-\Art(X/S) \leq \nu(\Delta)$. As a corollary, we obtain upper bounds on the number of components in the special fiber of the minimal proper regular model (Corollary~\ref{number}). This has applications to Chabauty's method of finding rational points on curves of genus at least $2$ \cite{poonenstoll}.

It might be possible to adapt the same strategy to extend the results to the case of non-rational Weierstrass points. The main difficulties in making this approach work are in understanding the right analogues of the results in Sections~\ref{breakupnaive} and \ref{localcontribution}.

\subsection{Notation}
The invariants $-\Art(X/S)$ and $\nu(\Delta)$ are unchanged when we extend scalars to the strict Henselization. So from the very beginning, we let $R$ be a Henselian discrete valuation ring with algebraically closed residue field $k$. Let $K$ be its fraction field. Assume that $\cha k \neq 2$. Let $\nu \colon K \rightarrow \Z \cup \{\infty\}$ be the discrete valuation on $K$. Let $t$ be a uniformizer of $R$; $\nu(t) = 1$. Let $S = \Spec R$. Let $C$ be a hyperelliptic curve over $K$ with $K$-rational Weierstrass points and genus $g \geq 2$. 

Let $y^2-f(x) = 0$ be an {\textup{\color{blue}integral Weierstrass equation}} for $C$, i.e., $f(x) \in R[x]$ and $C$ is birational to the plane curve given by this equation. The {\textup{\color{blue}discriminant}} of a Weierstrass equation ${\color{blue}d_f}$ equals the discriminant of $f$ considered as a polynomial of degree $2g+2$. A {\textup{\color{blue}minimal Weierstrass equation}} for $C$ is a Weierstrass equation for $C$ such that $\nu(d_f)$ is as small as possible amongst all integral Weierstrass equations for $C$. The {\textup{\color{blue} minimal discriminant $\nu (\Delta)$ of $C$}} equals $\nu(d_f)$ for a minimal Weierstrass equation $y^2-f(x)$ for $C$.

We will first show that we can find a minimal Weierstrass equation such that $f$ is a monic, separable polynomial of degree $2g+2$ in $R[x]$ that splits completely; $f(x) = (x-b_1 )(x-b_2 ) \ldots (x-b_{2g+2} )$ in $R[x]$. Let $y^2-h(x)$ be any minimal Weierstrass equation for $C$. Let $H(x,z) = z^{2g+2}g(x/z)$. Choose a point $\tilde{P} \in \P^1(k)$ that is not a zero of $H$ and let $P \in \P^1(R)$ be a lift of $\tilde{P}$; $P \bmod t = \tilde{P}$. Since $\mathrm{GL}_2(R)$ acts transitively on $\P^1(R)$, we can find $\phi \in \mathrm{GL}_2(R)$ that sends $P$ to $[1:0] \in \P^1_R$. Then, if $F(x,z) = \phi \cdot H(x,z)$, then $F(x,1)$ is of degree $2g+2$ and $u \colonequals F(1,0) \in R$ is a unit. Let $f(x) = u^{-1}F(x,1)$. Since $\cha k \neq 2$ and $R$ is Henselian with algebraically closed residue field, we can find a $u' \in R$ such that $u'^2 = u$. This tells us that by scaling $y$ by $u'$, we obtain a Weierstrass equation $y^2-f(x)$ for $C$ such that $f(x)$ is monic and separable of degree $2g+2$. Since 
$\det \phi$ is a unit in $R$, and the discriminant of $f$ differs from the discriminant of $h$ by a power of $\det \phi$, it follows that $y^2-f(x)$ is a minimal Weierstrass equation for $C$. Fix such an equation. 

For any proper regular curve $Z$ over $S$, we will denote the special fiber of $Z$ by $Z_s$, the generic fiber by $Z_\eta$ and the geometric generic fiber by $Z_{\overline{\eta}}$. We will denote the fraction field of an integral scheme $Z$ by $K(Z)$, the local ring at a point $z$ of a scheme $Z$ by $\O_{Z,z}$ and the unique maximal ideal in $\O_{Z,z}$ by $\mathfrak{m}_{Z,z}$. The reduced scheme attached to a scheme $Z$ will be denoted $Z_{\mathrm{red}}$.

\section{Construction of the regular model}\label{construct}

We first prove a lemma that gives sufficient conditions for the normalization of a regular $2$-dimensional scheme in a degree $2$ extension of its function field to be regular.
\begin{lemma}\label{useful}
Let $Y$ be a regular integral $2$-dimensional scheme and let $f$ be a rational function on $Y$ that is not a square. Assume that the residue field at any closed point of $Y$ is not of characteristic $2$. (Weil divisors make sense on $Y$.) Let $(f) = \sum_{i \in I} m_i \Gamma_i$. Assume that 
 \begin{enumerate}
  \item Any two $\Gamma_i$ for which $m_i$ is odd do not intersect. 
  \item Any $\Gamma_i$ for which $m_i$ is odd is regular. 
 \end{enumerate}

 Then the normalization of $Y$ in $K(Y)(\sqrt{f})$ is regular.
\end{lemma}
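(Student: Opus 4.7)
The plan is to verify regularity of the normalization locally at each closed point of $Y$, constructing the local integral closure explicitly. Fix a closed point $y \in Y$ and let $A = \O_{Y,y}$; this is a two-dimensional regular local ring (hence a UFD) with residue characteristic different from $2$. Since $Y$ is regular it is normal, so the normalization of $Y$ in $K(Y)(\sqrt{f})$ is finite over $Y$, and the question reduces to showing that the integral closure $B$ of $A$ in $L := K(Y)(\sqrt{f})$ is regular.

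Because $A$ is a UFD, the rational function $f$ factors there as a unit times a product of prime powers, where each prime corresponds to one of the $\Gamma_i$ through $y$ and appears to the exponent $m_i$. The field $L$ depends on $f$ only modulo $(K(Y)^\times)^2$, so only the parities of the $m_i$ matter, and hypothesis (1) forces at most one such parity to be odd at $y$. Multiplying $f$ by a square in $K(Y)^\times$ (which preserves $L$), I may therefore assume that either $f = u \in A^\times$, or $f = u\pi$ where $\pi \in A$ is a prime generating the height-one prime cut out by the unique odd-multiplicity component $\Gamma$ through $y$. The candidate for $B$ is $\widetilde{B} := A[T]/(T^2 - f)$, a free $A$-algebra of rank $2$ which is a domain with fraction field $L$ because $f$ is not a square in $K(Y)$. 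It will suffice to prove $\widetilde{B}$ is regular, as it is then normal and thus equal to $B$.

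In the unit case the discriminant $-4u$ of $\widetilde{B}/A$ is a unit, so $\widetilde{B}/A$ is étale; étale extensions preserve regularity of the base, so $\widetilde{B}$ is regular. In the prime case, hypothesis (2) says $A/(\pi)$ is regular, equivalently $\pi \notin \mathfrak{m}_A^2$, so $\pi$ extends to a regular system of parameters $(\pi, x)$ of $A$. Since $T^2 = u\pi \in \mathfrak{m}_A \widetilde{B}$, the ring $\widetilde{B}$ is local with maximal ideal $(\mathfrak{m}_A, T) = (\pi, x, T)$. But inside $\widetilde{B}$ one has $\pi = u^{-1} T^2 \in (T)$, so this collapses to $(x, T)$, generated by $\dim \widetilde{B} = 2$ elements; hence $\widetilde{B}$ is regular.

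The substantive step is the prime case, and hypothesis (2) is used in an essential way in the collapse $(\pi, x, T) = (x, T)$: it is exactly the regularity of $\Gamma$ at $y$ that makes $\pi$ part of a regular system of parameters and thus allows $\pi$, rather than requiring an independent third generator, to be absorbed into $(T)$ in $\widetilde{B}$. Dropping either hypothesis — two odd components meeting at $y$, or a single odd component that is singular at $y$ — would force the maximal ideal of $\widetilde{B}$ to need at least three generators, producing a singularity in the normalization above $y$.
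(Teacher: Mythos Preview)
Your proof is correct and follows essentially the same approach as the paper's: reduce to the local ring $A=\O_{Y,y}$, use the UFD property to replace $f$ by a unit or a unit times a prime, and then show $A[T]/(T^2-f)$ is regular via \'etaleness in the unit case and an explicit two-element generating set for the maximal ideal in the prime case. Your write-up is in fact slightly more careful than the paper's sketch --- you explain why the candidate ring is a domain with the correct fraction field (hence equals the integral closure once shown normal), and you justify locality of $\widetilde{B}$ in the prime case --- but the underlying argument is the same.
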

\begin{proof}
We will sketch the details of the proof. The construction of the normalization is local on the base. Therefore, it suffices to check that for every closed point $y$ of $Y$, the normalization of the corresponding local ring $\O_{Y,y}$ in $K(Y)(\sqrt{f})$ is regular. There are two cases to consider. 

The first case is when $m_i$ is even for every $\Gamma_i$ that contains $y$. In this case, since $\O_{Y,y}$ is a regular and hence a unique factorization domain, we can write $f = (c_1/c_2)^2u$ for some $c_1,c_2 \in \O_{Y,y} \setminus \{ 0 \}$ and a unit $u \in \O_{Y,y}$. Using the fact that $2$ is a unit in $\O_{Y,y}$ for every $y$, a standard computation then shows that the normalization of $\O_{Y,y}$ in $K(Y)(\sqrt{f})$ is $\O_{Y,y}[z]/(z^2-u)$. From this presentation, we conclude that the normalization is \'{e}tale over $\O_{Y,y}$, and hence regular by {\cite[p.49, Proposition 9]{blr}}. 

The second case is when exactly one of the $m_i$ is odd for the $\Gamma_i$ that contain $y$. Let $a$ be an irreducible element of the unique factorization domain $\O_{Y,y}$, corresponding to the unique $\Gamma_i$ for which $m_i$ is odd. In this case, $f = (c_1/c_2)^2 au$, where $c_1,c_2 \in \O_{Y,y} \setminus \{ 0 \}$ and $u$ is a unit in $\O_{Y,y}$ as before. One can then check that the normalization of $\O_{Y,y}$ in $K(Y)(\sqrt{f})$ is $\O_{Y,y}[z]/(z^2-au)$. Since $\Gamma_i$ is regular at $y$, we can find an element $b \in \O_{Y,y}$ such that $a$ and $b$ generate the maximal ideal of $\O_{Y,y}$. One can then check that $z$ and $b$ generate the unique maximal ideal of $\O_{Y,y}[z]/(z^2-au)$. This implies that $\O_{Y,y}[z]/(z^2-au)$ is regular. 
\end{proof}

In our example, $Y = \P^1_R$ and the rational function $f$ is $(x-b_1 )(x-b_2 ) \ldots (x-b_{2g+2} )$. The divisor of $f$ is just the sum of the irreducible principal horizontal divisors $(x-b_i )$, all appearing with multiplicity $1$ in $(f)$, and the divisor at $\infty$ (the closure of the point at $\infty$ on the generic fiber), with multiplicity $-(2g+2)$. If the $b_i$ belong to distinct residue classes modulo $t$, then the condition in the lemma is satisfied and we get the regular scheme $\mathrm{Proj}\ \frac{R[x,y,z]}{y^2-z^{2g+2}f(x/z)}$. If some of the $b_i$ belong to the same residue class, then the corresponding horizontal divisors would intersect at the closed point on the special fiber given by this residue class and we cannot apply the lemma directly with $Y = \P^1_R$. We will instead apply the lemma to the divisor of $f$ on an iterated blow-up of $\P^1_R$. The generic fiber of this new $Y$ is still $\P^1_K$, so the regular scheme that we obtain will still be a relative $S$-curve with generic 
fiber the hyperelliptic curve we started with. 

We will need another lemma to show that we can resolve the issue discussed above by replacing $\P^1_R$ by an iterated blow-up of $\P^1_R$. The following lemma is a minor modification of \cite[p.64, Lemma 1.4]{liulor}, where we consider irreducible divisors appearing in the divisor of an arbitrary rational function on a model (instead of the rational function $t$) and the order of vanishing of $f$ along these divisors instead. We recover \cite[p.64, Lemma 1.4]{liulor} by taking $f$ to be $t$. 
\begin{lemma}\label{valf}
 Let $Y/R$ be a regular model of a curve $Y_\eta/K$. Let $f$ be a rational function on $Y$. Let $C$ and $D$ be irreducible divisors of $Y$ that appear in the divisor of $f$, and let the order of vanishing of $f$ along $C$ and $D$ be $r_C$ and $r_D$ respectively. Let $y \in Y$ be a closed point, and let $Y'$ denote the model of $Y_\eta$ obtained by blowing up $Y$ at $y$. Let $E \subset Y'$ denote the exceptional divisor.
 \begin{enumerate}
  \item If $y$ is a regular point of $C$ that does not belong to any other irreducible divisor appearing in $(f)$, then the order of vanishing of $f$ along $E$ equals $r_C$.
  \item If $y \in C \cap D$ and does not belong to any other divisors appearing in $(f)$, and if $C$ and $D$ intersect transversally at $y$, then the order of vanishing of $f$ along $E$ is $r_C +r_D$.
 \end{enumerate}
\end{lemma}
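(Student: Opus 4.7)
The plan is to reduce to a purely local computation in the regular $2$-dimensional local ring $\mathcal{O}_{Y,y}$, exploit the hypotheses to write $f$ as a monomial in a regular system of parameters times a unit, and then read off the order of vanishing along $E$ by pulling back to the standard affine charts of the blow-up.

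First I would set up the local picture. Since $Y$ is regular and $2$-dimensional, $\mathcal{O}_{Y,y}$ is a $2$-dimensional regular local ring, and its maximal ideal $\mathfrak{m}_{Y,y}$ is generated by a regular system of parameters $(u,v)$. In case (1), $y$ is a regular point of $C$, so I can choose the local equation $u$ of $C$ at $y$ as part of a regular system of parameters $(u,v)$. Since no other component of $(f)$ passes through $y$, localizing the factorization $(f) = \sum m_i \Gamma_i$ at $y$ gives $f = u^{r_C}\cdot g$ with $g \in \mathcal{O}_{Y,y}^\times$. In case (2), because $C$ and $D$ meet transversally at $y$, the local equations of $C$ and $D$ at $y$ generate $\mathfrak{m}_{Y,y}$, so I can take them as $u$ and $v$ respectively; then $f = u^{r_C} v^{r_D}\cdot g$ with $g$ a unit, because no other divisor in $(f)$ passes through $y$.

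Next I would compute the pullback of $f$ on the blow-up. Since the question is local at $y$, I may replace $Y$ by $\mathrm{Spec}\,\mathcal{O}_{Y,y}$, in which case the blow-up $Y'\to Y$ at $y$ is covered by the two standard affine charts $\mathrm{Spec}\,\mathcal{O}_{Y,y}[v/u]$ and $\mathrm{Spec}\,\mathcal{O}_{Y,y}[u/v]$. On the first chart the exceptional divisor $E$ is cut out by $u$, and its generic point corresponds to the height-one prime $(u)$. In case (1) the pullback of $f$ on this chart is $u^{r_C}\cdot g$, and since $g$ is a unit (already in $\mathcal{O}_{Y,y}$, hence in $\mathcal{O}_{Y,y}[v/u]$), this shows $\mathrm{ord}_E(f)=r_C$. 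In case (2) one substitutes $v = u\cdot(v/u)$, giving
\[
f = u^{r_C}\bigl(u\cdot (v/u)\bigr)^{r_D}\cdot g = u^{r_C+r_D}\cdot (v/u)^{r_D}\cdot g,
\]
and since $(v/u)^{r_D}g$ does not vanish at the generic point of $E$ (the class of $v/u$ there is the affine coordinate on $E \cong \mathbb{P}^1$ at a non-zero non-infinity value is irrelevant; what matters is that $v/u$ is nonzero at the generic point of $E$ inside the chart), I conclude $\mathrm{ord}_E(f) = r_C + r_D$.

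Finally I would note the only thing that takes genuine care is the invariance of $\mathrm{ord}_E$ under the choice of chart, which one can verify by the symmetric computation on the second chart, or simply by noting that $\mathrm{ord}_E$ is the valuation at a well-defined discrete valuation ring independent of the affine patch used. No deeper obstacle is present: the whole argument is a direct blow-up computation in a regular local ring, enabled by the transversality and regularity hypotheses that let us factor $f$ as a monomial times a unit in $\mathcal{O}_{Y,y}$.
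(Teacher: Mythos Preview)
Your proposal is correct and follows exactly the approach the paper indicates: the paper's own proof is simply ``Omitted. This can be seen using explicit equations of the blow-up in a neighbourhood of $y$,'' and what you have written is precisely that explicit blow-up computation in local coordinates. The one sentence that is slightly garbled (about ``a non-zero non-infinity value is irrelevant'') could be cleaned up to say that $v/u$ is a unit in the local ring $\mathcal{O}_{Y',\eta(E)}$ since its image in the residue field $k(E)$ is the affine coordinate, hence nonzero.
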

\begin{proof}
 Omitted. This can be seen using explicit equations of the blow-up in a neighbourhood of $y$.
\end{proof}

We are now ready to construct the regular model $X$ of $C$. A very similar construction already appears in \cite{kausz} under some additional simplifying hypotheses. The model that is obtained there turns out to be semi-stable. The regular model $X$ that is constructed below is not necessarily semi-stable.

Let $D_i$ be the irreducible principal horizontal divisor $(x-b_i )$ on $\P^1_R$. First blow-up $\P^1_R$ at those closed points on the special fiber where any two of the $D_i$ intersect to obtain a new scheme $\mathrm{Bl}_1(\P^1_R)$. On this scheme, the strict transforms of any two divisors $D_i$ and $D_j$ for which the $b_i$ agree mod $t$ and not mod $t^2$ will no longer intersect. If some of the $b_i$ agree mod $t^2$ as well, then continue to blow-up (that is, now blow up $\mathrm{Bl}_1(\P^1_R)$ at the closed points on the special fiber of $\mathrm{Bl}_1(\P^1_R)$ where any two of the strict transforms of the divisors $(x-b_i)$ intersect, and call the result $\mathrm{Bl}_2(\P^1_R)$). Since the $b_i$ are pairwise distinct, we will eventually end up with a scheme $\mathrm{Bl}_n(\P^1_R)$ where no two of the irreducible horizontal divisors occuring in $(f)$ intersect. We may hope to set $Y$ equal to $\mathrm{Bl}_n(\P^1_R)$, but the divisor of the rational function $f$ might now vanish along some irreducible 
components of the special fiber. 

Lemma~\ref{valf} now tells us that a single blow-up of $\mathrm{Bl}_n(\P^1_R)$ based at a finite set of closed points will ensure that no two components where $f$ vanishes to odd order intersect. Do this as well and call the resulting scheme $Y$. Call an irreducible component of the special fiber of $Y$ {\color{blue} even} if the order of vanishing of $f$ along this component is even. Similarly define {\color{blue} odd} component. Similarly define odd and even components of $\mathrm{Bl}_n(\P^1_R)$.

Recall the notion of a good model as defined in \cite[p.66,1.8]{liulor}. A regular model $Y/\O_K$ of $Y_\eta/K$ is {\color{blue}good} if it satisfies the following two conditions:
\begin{enumerate}
 \item The irreducible components of $Y_s$ are smooth.
 \item Each singular point of $Y_s$ belongs to exactly two irreducible components of $Y_s$ and these components intersect transversally.
\end{enumerate}
The blow-up of a good model at a closed point is again a good model.

The model $Y$ we have constructed is a good model of $\P^1_K$ as it is obtained using a sequence of blow-ups starting from the good model $\P^1_R$ of $\P^1_K$. The model $\mathrm{Bl}_n(\P^1_R)$ is the model we would get using \cite[p.66, Lemma 1.9]{liulor} if we start with the model $\P^1_R$ and the divisor $(f)$ on it. Set $X$ to be equal to the normalization of $Y$ in $K(Y)(\sqrt{f})$. 

\begin{thm}\label{Xisregular}
 The scheme $X/S$ is regular.
\end{thm}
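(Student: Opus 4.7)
The plan is to verify the two hypotheses of Lemma~\ref{useful} for the regular model $Y$ and the rational function $f$, from which regularity of $X$ follows immediately. Since $Y$ is obtained from $\mathbb{P}^1_R$ by a sequence of blow-ups at closed points where certain divisors meet, the divisor $(f)$ on $Y$ decomposes as
\[ (f) = \sum_{i=1}^{2g+2} \widetilde{D}_i + \sum_{j} a_j E_j, \]
where $\widetilde{D}_i$ is the strict transform of the horizontal divisor $D_i = (x - b_i)$, each appearing with multiplicity $1$, and the $E_j$ are the irreducible components of the special fiber $Y_s$ with multiplicities $a_j$ determined recursively by Lemma~\ref{valf}.

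Next I would check hypothesis (2), that every odd-multiplicity component of $(f)$ is regular. The horizontal components $D_i$ are sections of $\mathbb{P}^1_R \to S$ (because each $b_i \in R$), hence isomorphic to $\Spec R$ and in particular regular; their strict transforms on iterated blow-ups at closed points remain regular. The vertical components $E_j$ are smooth because $Y$ is a good model of $\mathbb{P}^1_K$, obtained by successive blow-ups of the good model $\mathbb{P}^1_R$ at closed points, and the good-model property is preserved under such blow-ups as noted in the excerpt.

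For hypothesis (1), I have to check that no two odd-multiplicity components meet. The first $n$ rounds of blow-ups were performed exactly so that the strict transforms $\widetilde{D}_i$ and $\widetilde{D}_{i'}$ become disjoint for $i \neq i'$, so no two odd horizontal components intersect. The final blow-up, based at a finite set of closed points, is then chosen precisely so that no two components along which $f$ vanishes to odd order meet — this includes not only odd–odd intersections among the $E_j$, but also the intersections between any $\widetilde{D}_i$ (always odd) and any $E_j$ with $a_j$ odd that pass through a common closed point. I would justify this by noting that by Lemma~\ref{valf}, a blow-up at a closed point lying on a single odd component or at the transverse intersection of two odd components changes the local parity picture in a controlled way, and iterating the construction separates all odd components.

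The main obstacle is the bookkeeping for the final blow-up: one has to argue that after $\mathrm{Bl}_n(\mathbb{P}^1_R)$, the remaining intersections of odd components in $(f)$ are finite in number and transverse, so that a single further blow-up at each such point resolves them without creating new odd–odd intersections. Once the two hypotheses are verified, Lemma~\ref{useful} applies, and since $X$ is by definition the normalization of $Y$ in $K(Y)(\sqrt{f})$, regularity of $X$ follows.
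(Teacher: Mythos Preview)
Your approach is correct and matches the paper's: the proof amounts to checking that the divisor of $f$ on $Y$ satisfies the two hypotheses of Lemma~\ref{useful}, which holds by the way $Y$ was constructed. One minor omission in your decomposition is the horizontal divisor $\overline{\{\infty\}}$, which appears with even multiplicity $-(2g+2)$ and hence does not affect the odd-component analysis; otherwise the paper's own proof is even terser than yours, simply asserting that the hypotheses hold and deferring the verification to the construction paragraph preceding the theorem.
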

\begin{proof}
 The components of $Y_s$ are smooth and the divisor $(f)$ satisfies the conditions in the statement of Lemma~\ref{useful}. It follows that $X$ is regular.
\end{proof}

We will now prove that $X$ is a good model of $C$ and compute the multiplicities of the components of the special fiber of $X$. Let the divisor of $t$ on $X$ be $\sum m_i \Gamma_i$; here the sum runs over all irreducible components of the special fiber $X_s$ and the $\Gamma_i$ are integral divisors on $X$. Let $\psi$ denote the map $X \rightarrow \mathrm{Bl}_n(\P^1_R)$.

\begin{lemma}\label{construction} \hfill
\begin{enumerate}[\upshape (a)]
\item The scheme $X$ is a good model of $C$.
\item Each $m_i$ is $1$ or $2$. Furthermore, $m_i = 2$ if and only if either
\begin{enumerate}[(i)]
\item $\psi(\Gamma_i)$ is an odd component of $(\mathrm{Bl}_n(\P^1_R))_s$, or,
\item $\psi(\Gamma_i) = \Gamma \cap \Gamma'$ for two distinct odd components $\Gamma$ and $\Gamma'$ of $(\mathrm{Bl}_n(\P^1_R))_s$.
\end{enumerate}
\end{enumerate} 
\end{lemma}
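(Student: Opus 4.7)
The plan is to derive both parts from the local description of $\pi \colon X \to Y$ given in the proof of Lemma~\ref{useful}, combined with a bookkeeping of the multiplicity of $(t)$ along the components of $Y_s$ throughout the sequence of blow-ups.

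For (a), I would analyze each closed point $x \in X$ by cases on the image $y = \pi(x) \in Y$. If $y$ lies on no odd component, then $\mathcal{O}_{X,x}$ is \'etale over $\mathcal{O}_{Y,y}$, so the special fiber of $X$ near $x$ is locally isomorphic to that of $Y$ near $y$, inheriting the good-model property from $Y$. If $y$ lies on an odd component $E$ with local equation $a$, then $\mathcal{O}_{X,x} = \mathcal{O}_{Y,y}[z]/(z^2-au)$, and $(z,b)$ is a regular system of parameters, where $b$ is either a local equation for a second (necessarily even) component through $y$ or a transverse local coordinate on $Y$. A direct local calculation identifies the reduced special fiber near $x$ as $\{z=0\}$ alone, or $\{z=0\} \cup \{b=0\}$ in the two-component case --- in either case smooth components meeting transversally at $x$.

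For (b), I would first determine the multiplicities in $(t)$ of the components of $Y_s$. By induction on the stages of the iterative construction of $\mathrm{Bl}_n(\P^1_R)$, every component of $(\mathrm{Bl}_n(\P^1_R))_s$ has multiplicity $1$ in $(t)$: at each stage the points being blown up are intersections of strict transforms $\tilde D_i \cap \tilde D_j$ of horizontal divisors, and a coordinate calculation shows that such an intersection always sits in the interior of the most recently introduced exceptional divisor, never on its crossings with older components. The final blow-up, performed at finitely many closed points of $\mathrm{Bl}_n(\P^1_R)$, produces exceptional divisors of multiplicity $2$ in $(t)$ over intersection points $\Gamma \cap \Gamma'$ of two distinct odd components of $(\mathrm{Bl}_n(\P^1_R))_s$ (each of multiplicity $1$, so the blow-up formula yields $1+1=2$; these exceptional divisors are even in $(f)$ by Lemma~\ref{valf}), and of multiplicity $1$ over the remaining blown-up points.

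Pushing these multiplicities through $\pi$ uses that the ramification index at the generic point of a component $E$ of $Y_s$ is $2$ precisely when $E$ is odd in $(f)$, and $1$ otherwise; hence the multiplicity of any $\Gamma \subset \pi^{-1}(E)$ in $(t)$ equals $e_{\Gamma/E} \cdot n_E$. Running through the possibilities: $E$ odd in $\mathrm{Bl}_n(\P^1_R)$ yields $m_\Gamma = 2 \cdot 1 = 2$; $E$ even in $\mathrm{Bl}_n(\P^1_R)$ yields $m_\Gamma = 1 \cdot 1 = 1$; $E$ a final-blow-up exceptional over an intersection of two odd components yields $m_\Gamma = 1 \cdot 2 = 2$; and any other final-blow-up exceptional yields $m_\Gamma = 1 \cdot 1 = 1$. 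This gives the dichotomy $m_\Gamma \in \{1,2\}$, with $m_\Gamma = 2$ exactly in cases (i) and (ii) of the statement. The main obstacle will be the inductive bookkeeping for the multiplicities on $Y$: one must verify, in coordinates on each successive exceptional divisor, that the horizontal-divisor intersections being blown up always lie in the interior of a single component of the current special fiber, never at a crossing of two components. Once this is granted, the multiplicity counts on $Y$ and their transfer to $X$ follow routinely from the local structure in Lemma~\ref{useful} and the ramification analysis.
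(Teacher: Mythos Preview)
Your proposal is correct and follows essentially the same approach as the paper: both use the local description from Lemma~\ref{useful} for part~(a), and for part~(b) both track the multiplicity of $(t)$ through the sequence of blow-ups (the paper invokes \cite[Lemma~1.4]{liulor} repeatedly) and then finish with the ramification-index computation $m_\Gamma = e_{\Gamma/E}\cdot n_E$. The inductive bookkeeping you flag as the main obstacle---that each successive blow-up center in the construction of $\mathrm{Bl}_n(\P^1_R)$ lies on a single component of the current special fiber---is precisely what the paper takes for granted when it says ``a repeated application,'' and your coordinate justification (horizontal divisors are transverse to the fiber, so their strict transforms meet the new exceptional divisor away from its crossing with the old component) is the correct way to make it explicit.
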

\begin{proof} \hfill
\begin{enumerate}[\upshape (a)]
\item Let $S$ be the set of odd components of $Y_s$ and let $B$ be the divisor $\sum_{\Gamma \in S} \Gamma + \sum_{i=1}^{2g+2} \overline{ \{ b_i \} }$ where $\overline{ \{ b_i \} }$ is the horizontal divisor that is the closure of the point $b_i$ on the generic fiber $\P^1_K$. Since the map $X \rightarrow Y$ is finite of degree $2$, the image of an irreducible component of $X_s$ is an irreducible component of $Y_s$, and there are at most two irreducible components of $X_s$ mapping down to an irreducible component of $Y_s$. All the irreducible components of $Y_s$ are isomorphic to $\P^1_k$. There are two irreducible components of $X_s$ mapping down to a given component of $Y_s$ only when the component of $Y_s$ is an even component that does not intersect any of the irreducible divisors appearing in $B$. In this case the two components in $X_s$ that map down to the given component of $Y_s$ do not intersect, and are isomorphic to $\P^1_k$. In all other cases there is a unique component of $X_s$ mapping down to 
a component of $Y_s$.

Since at most two irreducible components of $Y_s$ pass through any given point of $Y_s$, we see that this implies that at most two irreducible components of $X_s$ pass through any given point of $X_s$. The intersection point $x$ of two irreducible components of $X_s$ has to map to the intersection point $y$ of two irreducible components of $Y_s$. If $y$ is the intersection of two even components, then the map $\psi$ is etale at $x$, so the intersection is still transverse. If $y$ is the intersection of an even and odd component, because the intersection of these components is transverse, we can pick the function $g$ in the proof of Lemma~\ref{useful} to be a uniformizer for the even component. This shows that \'{e}tale locally, the two components that intersect at $x$ are given by the vanishing of $\sqrt{t_ju}$ and $g$ and as these two elements generate the maximal ideal at $x$ \'{e}tale locally, the intersection is transverse once again. For a closed point $x$ on $X_s$ lying on exactly one component $\Gamma$ of $X_s$, the same argument shows that we can choose a system of parameters at the point such that one of them cuts out the component $\Gamma$ of $X_s$. This shows that the irreducible components of $X_s$ are smooth.

\item A repeated application of \cite[p.64, Lemma 1.4]{liulor} tells us that the multiplicity of every irreducible component of $(\mathrm{Bl}_n(\P^1_R))_s$ is $1$. The same lemma tells us that $Y_s$ has a few additional components of multiplicity either $1$ or $2$ - If we blow up the closed point that is the intersection of an odd component of the special fiber of $\mathrm{Bl}_n(\P^1_R)$ with a horizontal divisor appearing in $(f)$, then we get a component of multiplicity $1$ in the special fiber and if we blow up the intersection of two odd components of the special fiber, we get a component of multiplicity $2$. Since $f$ vanishes to an even order along components of multiplicity $2$ in $Y_s$, each $m_i$ is either $1$ or $2$ - It is $1$ if $\Gamma_i$ maps down to an even component of $Y_s$ and its image in $(\mathrm{Bl}_n(\P^1_R))_s$ does not equal the intersection point of two components of the special fiber and it is $2$ otherwise. This is because $\O_{Y,\eta(\psi(\Gamma_i))} \rightarrow \O_{X,\eta(\Gamma_i)}$ is an extension of discrete valuation rings (here $\eta(C)$ for an integral curve $C$ denotes its generic point), and the corresponding extension of fraction fields is of degree $2$. $t$ is a uniformizer in $\O_{Y,\eta(\psi(\Gamma_i))}$, so its valuation above is either $1$ or $2$ depending on whether the extension is ramified at $(t)$ or not. The extension is not ramified if the image of $\Gamma_i$ in $Y$ is an even component. \qedhere
\end{enumerate}
\end{proof}

\section{An explicit formula for the Deligne discriminant}\label{explicitformula}
The Deligne discriminant of the model $X$ is $-\Art (X/S) := -\chi(X_{\overline{\eta}})+\chi(X_s)+\delta$, where $\delta$ is the Swan conductor associated to the $\ell$-adic representation $\mathrm{Gal}\ (\overline{K}/K) \rightarrow \mathrm{Aut}_{\Q_\ell} \ (H^1_{\mathrm{et}}(X_{\overline{\eta}}, \Q_\ell))$ ($\ell \neq \mathrm{char}\ k$) \cite[p.153]{saito2}. 

\begin{lemma}
 \[ -\Art(X/S) = -\chi(X_{\overline{\eta}})+\chi(X_s) =  \sum_i \left( (1-m_i)\chi(\Gamma_i) + \sum_{j \neq i} (m_j-1) \Gamma_i.\Gamma_j \right) + \sum_{i < j} \Gamma_i.\Gamma_j .\] 
\end{lemma}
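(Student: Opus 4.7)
The assertion combines two claims: the identification $-\Art(X/S)=-\chi(X_{\overline\eta})+\chi(X_s)$, which amounts to vanishing of the Swan conductor $\delta$, and an explicit combinatorial formula for the right-hand side in terms of $X_s$. I would treat these separately, beginning with $\delta=0$. The Swan conductor of $H^1_{\mathrm{et}}(C_{\overline K},\Q_\ell)$ is independent of the auxiliary prime $\ell\neq\cha k$, so I would take $\ell=2$ (permissible since $\cha k\neq 2$). The $2$-torsion $J[2]$ of the Jacobian of $C$ is generated by classes $[W_i-W_j]$ of differences of Weierstrass points, since $2W_i$ is linearly equivalent to $\pi^*\mathcal{O}_{\mathbb{P}^1}(1)$ for every Weierstrass point under the hyperelliptic double cover $\pi\colon C\to \mathbb{P}^1$, and such differences span $J[2]\cong(\Z/2)^{2g}$. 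The $K$-rationality hypothesis on Weierstrass points therefore makes $J[2]$ a trivial Galois module, so inertia acts on the Tate module $T_2J$, and dually on $H^1_{\mathrm{et}}(C_{\overline K},\Q_2)$, through the kernel of the reduction $\mathrm{GL}_{2g}(\Z_2)\to\mathrm{GL}_{2g}(\mathbb{F}_2)$, which is pro-$2$. Wild inertia is pro-$p$ with $p=\cha k\neq 2$ and admits no nontrivial continuous map to a pro-$2$ group, so it acts trivially and $\delta=0$.

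For the combinatorial identity, symmetrizing the ordered double sum gives
\[ \sum_i\sum_{j\neq i}(m_j-1)\,\Gamma_i\cdot\Gamma_j \;=\; \sum_{i<j}(m_i+m_j-2)\,\Gamma_i\cdot\Gamma_j, \]
so the right-hand side of the claimed equality simplifies to $\sum_i(1-m_i)\chi(\Gamma_i)+\sum_{i<j}(m_i+m_j-1)\Gamma_i\cdot\Gamma_j$. On the left, $\chi(X_{\overline\eta})=2-2g$, while $\chi(X_s)=\chi(X_s^{\mathrm{red}})$ because nilpotents do not affect $\ell$-adic \'etale cohomology. Lemma~\ref{construction} tells us $X$ is a good model, so $X_s^{\mathrm{red}}$ is a union of smooth $\Gamma_i$ whose only singularities are nodes, each lying on exactly two components. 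Iterated Mayer--Vietoris then yields $\chi(X_s)=\sum_i\chi(\Gamma_i)-\sum_{i<j}\Gamma_i\cdot\Gamma_j$, and the desired identity reduces to
\[ \sum_i m_i\,\chi(\Gamma_i) + (2g-2) = \sum_{i<j}(m_i+m_j)\,\Gamma_i\cdot\Gamma_j. \]

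To prove this final identity I would apply adjunction twice. For each smooth component, adjunction on $X$ reads $\chi(\Gamma_i)=-\Gamma_i^2-K_X\cdot\Gamma_i$. Since $X_s=\mathrm{div}(t)$ is principal, $\mathcal{O}_X(X_s)$ is trivial, so $X_s\cdot\Gamma_j=0$ and $X_s^2=0$; the first of these rearranges to $m_j\Gamma_j^2=-\sum_{i\neq j}m_i\Gamma_i\cdot\Gamma_j$, and summing over $j$ while pairing ordered pairs gives $\sum_j m_j\Gamma_j^2=-\sum_{i<j}(m_i+m_j)\,\Gamma_i\cdot\Gamma_j$. Adjunction for the Cartier divisor $X_s$ on the regular arithmetic surface $X$ gives $2p_a(X_s)-2=X_s\cdot(K_X+X_s)=K_X\cdot X_s$, and flatness of $X/S$ forces $p_a(X_s)=g$, so $K_X\cdot X_s=2g-2$. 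Combining these,
\[ \sum_i m_i\chi(\Gamma_i) = -\sum_i m_i\Gamma_i^2 - K_X\cdot X_s = \sum_{i<j}(m_i+m_j)\,\Gamma_i\cdot\Gamma_j - (2g-2), \]
which rearranges to the required identity. The main obstacle is the first step: the Swan conductor vanishing rests on the external tameness input, whereas the identity itself is a self-contained intersection-theoretic calculation once one invokes the right adjunctions.
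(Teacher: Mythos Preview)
Your proof is correct, and the intersection-theoretic half runs on exactly the same ingredients as the paper's: adjunction for each $\Gamma_i$, adjunction for the full fiber $X_s$ together with $p_a(X_s)=g$, the relation $X_s\cdot\Gamma_i=0$, and the formula $\chi(X_s)=\sum_i\chi(\Gamma_i)-\sum_{i<j}\Gamma_i\cdot\Gamma_j$ for a normal-crossings configuration. You first symmetrize the double sum and reduce to a single identity, whereas the paper computes $-\chi(X_{\overline\eta})$ and $\chi(X_s)$ separately in the asymmetric form and adds; these are cosmetic rearrangements of the same calculation.

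The genuine difference is how you obtain $\delta=0$. The paper invokes Lemma~\ref{construction}(b) (all $m_i\in\{1,2\}$) together with Saito's tameness criterion \cite[Theorem~3]{saito1}, which guarantees $\delta=0$ whenever all component multiplicities are prime to $\cha k$. You instead exploit the rationality of the Weierstrass points directly: triviality of $J[2]$ forces the inertial image in $\Aut(T_2J)$ to land in the pro-$2$ congruence subgroup, so the pro-$p$ wild inertia (with $p\neq 2$) must act trivially. Your argument is self-contained and avoids the heavy external input from \cite{saito1}, at the cost of being tied to the rational-Weierstrass hypothesis; the paper's route, by contrast, would extend unchanged to any good model with multiplicities coprime to $\cha k$, independent of where the Weierstrass points live. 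Both are valid in the present setting.
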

\begin{proof}
 Since all irreducible components of $X_s$ have multiplicity either $1$ or $2$ in the special fiber and $\mathrm{char}\ k \neq 2$, \cite[p.1044, Theorem 3]{saito1} implies that $\delta = 0$. 

 Using the intersection theory for regular arithmetic surfaces, for a canonical divisor $K$ on $X$, we have
\begin{align*}
 -\chi(X_{\overline{\eta}}) &= 2p_a(X_{\overline{\eta}}) - 2 \\
 &= 2p_a(X_s)-2 \\
 &= X_s.(X_s+K) \\
 &= X_s.K \ \ \ (\textrm{because } X_s \ {\textrm{is a complete fiber}}, X_s.X_s = 0) \\
 &= \sum_i m_i \Gamma_i.K \\
 &= \sum_i m_i (-\chi(\Gamma_i) - \Gamma_i.\Gamma_i) \ \ \ (\textrm{by the adjunction formula applied to the divisor } \Gamma_i)\\
 &= \sum_i \left( -m_i \chi(\Gamma_i) + \sum_{j \neq i} m_j \Gamma_j.\Gamma_i \right) .
\end{align*}
The last equality is obtained from $X_s.\Gamma_i = 0$. 

Let $\lambda \colon \sqcup \Gamma_i \rightarrow (X_s)_{\mathrm{red}}$ be the natural map which is just the inclusion of each $\Gamma_i$ into $(X_s)_{\mathrm{red}}$. Since the $\Gamma_i$ are smooth, \cite[p.151, Theorem 2.6]{lor} tells us that $\chi(X_s) = \chi((X_s)_{\mathrm{red}}) = -\delta_{X_s} + \sum \chi(\Gamma_i)$ where $\delta_{X_s} = \sum_{P \in (X_s)_{\mathrm{red}}} ( |\lambda^{-1}(P)| - 1 )$. In our case $\delta_{X_s}$ is just the number of points where two components of $X_s$ meet. Since the intersections in $X_s$ are all transverse, 
\[ \delta_{X_s} = \sum_{i < j} \Gamma_i.\Gamma_j  = \sum_i \sum_{j \neq i} \Gamma_i.\Gamma_j - \sum_{i < j} \Gamma_i.\Gamma_j .\] 
Putting all this together, we can rewrite $\chi(X_s)$ in the following form
\[ \chi(X_s) = \sum_i \left( \chi(\Gamma_i) - \sum_{j \neq i} \Gamma_i.\Gamma_j \right) + \sum_{i < j} \Gamma_i.\Gamma_j .\]
This expression, together with the formula above for $-\chi(X_{\overline{\eta}})$ gives
\[ -\Art (X/S) = \sum_i \left( (1-m_i)\chi(\Gamma_i) + \sum_{j \neq i} (m_j-1) \Gamma_i.\Gamma_j \right) + \sum_{i < j} \Gamma_i.\Gamma_j . \qedhere \]
\end{proof}

\begin{remark}
 \textup{The formula 
 \[ -\chi(X_{\overline{\eta}}) + \chi(X_s) = \sum_i \left( (1-m_i)\chi(\Gamma_i) + \sum_{j \neq i} (m_j-1) \Gamma_i.\Gamma_j \right) + \sum_{i < j} \Gamma_i.\Gamma_j \]
 holds for any regular $S$ curve $X$ with smooth, projective, geometrically integral generic fiber and whose special fiber is a strict simple normal crossings divisor (i.e., the components themselves might have multliplicities bigger than $1$, but each of the components is smooth, and the reduced special fiber has at worst nodal singularities). We also recover the result that if $X/S$ is regular and semi-stable, then $-\Art (X/S) = \sum_{i < j} \Gamma_i.\Gamma_j$, since in this case $m_i = 1$ for all $i$ and $\delta = 0$ by \cite[p.1044, Theorem~3]{saito1}.}
\end{remark}

\section{Dual graphs}\label{dualgraphs}
By the construction of $X$ we have a sequence of maps $X \rightarrow Y \rightarrow \mathrm{Bl}_n(\P^1_R) \rightarrow \P^1_R$. Let $T_X$ be the dual graph of $X_s$, i.e., the graph with vertices the irreducible components of $X_s$, and an edge between two vertices with an edge if the corresponding irreducible components intersect. Let $T_Y$ be the dual graph of $Y_s$ and $T_B$ the dual graph of $(\mathrm{Bl}_n(\P^1_R))_s$.  For a vertex $v$ of any of the graphs $T_X,T_Y$ or $T_B$, the irreducible component corresponding to the vertex in the respective dual graph will be denoted $\Gamma_v$. Let $\psi_1$ denote the map $X \rightarrow Y$ and let $\psi_2$ the map $Y \rightarrow \mathrm{Bl}_n(\P^1_R)$. Let $\psi = \psi_2 \circ \psi_1$. 

We will denote the vertices of a graph $G$ by $V(G)$. For any $v \in V(G)$, let $N(v)$ (for neighbours of $v$) denote the set of vertices $w$ for which there is an edge between $v$ and $w$. If $G$ is a directed graph and $v \in V(G)$, let $C(v)$ (for children of $v$) denote the set of vertices $w$ for which there is an edge pointing from $v$ to $w$. 

The graph $T_B$ naturally has the structure of a rooted tree (remembering the sequence of blow-ups, i.e., whether the component was obtained as a result of a blow-up at a closed point of the other component). The graph $T_Y$ is obtained from the graph of $T_B$ by attaching some additional vertices between two pre-existing vertices connected by an edge and some additional leaves, so $T_Y$ is also a tree. By virtue of being rooted trees, the edges of $T_B$ and $T_Y$ can  be given a direction (and we choose the direction that points away from the root). 

There is a natural surjective map $\phi_1: V(T_X) \rightarrow V(T_Y)$: if the image of an irreducible component $\Gamma_{v''}$ of $X_s$ under $\psi_1$ is an irreducible component $\Gamma_{v'}$ of $Y_s$ then let $\phi_1(v'') = v'$. If two vertices of $T_X$ are connected by an edge, so are their images in $T_Y$. We can use this surjection to transfer the direction on the edges of $T_Y$ to the edges of $T_X$; this makes $T_X$ a directed graph. Call a vertex of $T_B$ {\color{blue}{odd}} (respectively {\color{blue}{even}}) if the order of vanishing of $f$ along the corresponding component is odd (respectively even). Similarly define odd and even vertices of $T_Y$. This definition is consistent with the earlier definition of odd and even components of $Y$ and $\mathrm{Bl}_n(\P^1_R)$.

\section{Deligne discriminant and dual graphs}\label{DDanddual}
The last term $\sum_{i < j} \Gamma_i.\Gamma_j$ in the Deligne discriminant can be thought of as the sum $\sum_{v'' \in V(T_X)} \left( \sum_{w'' \in C(v'')} \Gamma_{v''}.\Gamma_{w''} \right)$ . We use this observation to decompose the Deligne discriminant as a sum over the vertices of the graph $T_X$. Let $m_{v''}$ be the multiplicity of $\Gamma_{v''}$ in $X_s$. We then have
\[ -\Art (X/S) = \sum_{v'' \in V(T_X)} \left( (1-m_{v''})\ \chi(\Gamma_{v''}) + \sum_{w'' \in N(v'')} (m_{w''} - 1) \Gamma_{v''}.\Gamma_{w''} + \sum_{w'' \in C(v'')} \Gamma_{v''}.\Gamma_{w''} \right) .\]

\section{Description of the strategy}\label{description}
To compare the discriminant $d_f$ of the polynomial $f$ with the valuation of the Deligne discriminant of the model $X$, it would be useful if we could decompose $d_f$ as a sum of local terms. In the next section, we will show that there is a way to decompose the minimal discriminant as a sum over the vertices of $T_B$. There is a simple relation between the irreducible components of $X_s$ and those of $\left( \mathrm{Bl}_n(\P^1_R) \right)_s$ (which we will describe below), so we will be able to compare the two discriminants using this decomposition, by first comparing them locally. 

The image of an irreducible component of $Y_s$ under $\psi_2$ is either an irreducible component of $\left( \mathrm{Bl}_n(\P^1_R) \right)_s$ or a point that lies on exactly one of the irreducible components of $\left( \mathrm{Bl}_n(\P^1_R) \right)_s$ or the intersection point of two irreducible components of $\left( \mathrm{Bl}_n(\P^1_R) \right)_s$.  This induces a surjective map $\phi_2: V(T_Y) \rightarrow V(T_B)$ where the vertex corresponding to an irreducible component of $Y_s$ is mapped either to the vertex corresponding to the unique irreducible component that its image is contained in or to the smaller of the two vertices (by which we mean the vertex closer to the root) corresponding to the two irreducible components that its image is contained in. Let $\phi = \phi_1 \circ \phi_2$.

We have written the Deligne discriminant as $\sum_{v'' \in V(T_X)} \cdots$ and we can rewrite this sum as $\sum_{v \in V(T_B)} (\sum_{v'' \in V(T_X), \phi(v'') = v} \cdots)$, so the Deligne discriminant can be regarded as a sum over the vertices of $T_B$.

The discussion above implies the following lemma, which will be useful later on in an explicit computation of the Deligne discriminant.  
\begin{lemma}\label{adjacency}
Let $v'' \in V(T_X)$. 
\begin{enumerate}[\upshape (a)]
 \item If $w'' \in C(v'')$, then $\phi_1(w'') \in C(\phi_1(v''))$. In particular, if $w'' \in N(v'')$, then $\phi_1(w'') \in N(\phi_1(v''))$.
 \item Let $w'' \in C(v'')$. If $\psi(\Gamma_{w''})$ is a point, then $\phi(w'') = \phi(v'')$ and $\phi(v'')$ is an odd vertex. Otherwise, $\phi(w'') \in C(\phi(v''))$.
\end{enumerate}
\end{lemma}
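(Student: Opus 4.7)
The plan is to reduce to the two-step decomposition $X \xrightarrow{\psi_1} Y \xrightarrow{\psi_2} \mathrm{Bl}_n(\P^1_R)$ and handle each map separately, using $\phi = \phi_2 \circ \phi_1$.

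Part (a) is essentially a matter of unwinding the definition. The orientation on $T_X$ is, by construction, pulled back from $T_Y$ along $\phi_1$; by Lemma~\ref{construction}(a), two distinct components of $X_s$ mapping to the same component of $Y_s$ are disjoint, so whenever $v''$ and $w''$ span an edge of $T_X$, their images $\phi_1(v'')$ and $\phi_1(w'')$ are distinct and span an edge of $T_Y$ with the same orientation. The $N(v'')$ statement then follows by splitting the neighbor relation into the two possible orientations.

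For part (b), the key preparatory step is to classify the exceptional divisors introduced by $\psi_2$: the components of $Y_s$ contracted to a point by $\psi_2$ are exactly the exceptional divisors of the blow-ups performed to build $Y$ from $\mathrm{Bl}_n(\P^1_R)$. From the construction in Section~\ref{construct}, these come in two families. Type (i) are blow-ups of the intersection of two odd vertical components of $(\mathrm{Bl}_n(\P^1_R))_s$; they correspond to subdivision vertices inserted into an existing edge of $T_B$. Type (ii) are blow-ups of the intersection of an odd vertical component with the strict transform of a horizontal divisor $(x - b_i)$; they correspond to new leaves attached to the vertex of $T_B$ for that odd vertical component. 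No other exceptional divisors appear, since the horizontal divisors $(x - b_i)$ are pairwise disjoint on $\mathrm{Bl}_n(\P^1_R)$ and intersections of purely even-order components need not be separated.

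Writing $v' = \phi_1(v'')$ and $w' = \phi_1(w'')$, part (a) gives $w' \in C(v')$ in $T_Y$. If $\psi(\Gamma_{w''})$ is a point, then $w'$ is an exceptional divisor of type (i) or (ii). In either case the unique parent of $w'$ in $T_Y$ is an odd vertical component of $(\mathrm{Bl}_n(\P^1_R))_s$ (for type (i), the one lying closer to the root of the two that were blown up), which forces $v'$ to be this component. Inspection of the definition of $\phi_2$ then gives $\phi_2(v') = \phi_2(w')$, and $\phi(v'')$ is odd as required. Otherwise $\Gamma_{w'}$ is the strict transform of a component of $(\mathrm{Bl}_n(\P^1_R))_s$, so $w'$ descends to a vertex of $T_B$. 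Its parent $v'$ in $T_Y$ is either another strict transform --- in which case the edge joining them in $T_B$ was not subdivided by the blow-up, so $\phi_2(w') \in C(\phi_2(v'))$ with matching orientation --- or a type (i) subdivision vertex, in which case $\phi_2(v')$ and $\phi_2(w')$ are the two odd components whose intersection was blown up and the orientations in $T_Y$ and $T_B$ agree along this edge. A type (ii) exceptional cannot be $v'$, because it is a leaf. The only delicate point is verifying that the orientations on $T_Y$ and $T_B$ remain compatible under both subdivision and the identification of strict transforms; I expect this to be the main (but routine) bookkeeping obstacle.
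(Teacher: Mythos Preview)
Your proposal is correct and follows the same line as the paper. In fact the paper does not give a formal proof at all: it simply asserts that the lemma is implied by the preceding discussion of the maps $\phi_1$, $\phi_2$ and the construction of $Y$ from $\mathrm{Bl}_n(\P^1_R)$. Your write-up is a faithful and correct unpacking of that discussion --- in particular your classification of the exceptional divisors of $\psi_2$ into types (i) and (ii) matches exactly the partition $T_1$, $T_2$ that the paper later introduces in Lemma~\ref{invimgphi}(c), and your use of Lemma~\ref{construction}(a) to show $\phi_1(v'')\neq\phi_1(w'')$ for adjacent $v'',w''$ is the right ingredient for part~(a).
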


\section{A decomposition of the minimal discriminant}\label{breakupnaive}
To each vertex $v$ of $T_B$, we want to associate an integer $d(v)$ such that the minimal discriminant equals $\sum_{v \in V(T_B)} d(v)$. We will now define $d(v)$ by inducting on the vertices of $T_B$. 

For the base case, note that if the $b_i $ belong to distinct residue classes modulo $t$, then $\mathrm{Bl}_n(\P^1_R) = \P^1_R$ and $T_B$ is the graph with a single vertex $v$. The minimal discriminant is $0$, so we set $d(v) = 0$.

The scheme $\mathrm{Bl}_n(\P^1_R)$ was obtained as an iterated blow-up of $\P^1_R$ while trying to separate the horizontal divisors $(x-b_i)$ corresponding to the linear factors of $f$. This can be done for any separable polynomial $g \in R[x]$ that splits completely -- let $\mathrm{Bl}(g)$ denote the iterative blow up of $\P^1_R$ that one obtains while trying to separate the divisors corresponding to the linear factors of $g$. With this notation $\mathrm{Bl}(f)$ equals the scheme $\mathrm{Bl}_n(\P^1_R)$ we had above. 

Let $A$ be the set of residues of the $b_i$ modulo $t$. For a residue $a \in A$, let the weight of the residue $a$ ( $\colonequals \wt_a$), be the number of $b_i$ belonging to the residue class of $a$. Observe that the subtrees of the root of $T_B$ are in natural bijection with the residues of weight strictly larger than $1$. 

The minimal discriminant $\nu(\Delta)$ ($=\nu(d_f)$) can be decomposed as follows: 
\begin{align*} 
\nu(d_f) &= \sum_{\substack{a \in A \\ \wt_a > 1}} \nu \left(\prod_{\substack{b_i  \bmod t \ = \ a \\ b_j \bmod t \ = \ a \\ i \neq j}} \left( b_i  - b_j  \right) \right)  \\
&= \sum_{\substack{a \in A \\ \wt_a > 1}} \nu \left(t^{\wt_a(\wt_a-1)}\prod_{\substack{b_i \bmod t \ = \ a \\ b_j \bmod t \ = \ a \\ i \neq j}} \left( \frac{b_i  - b_j }{t} \right) \right)  \\
&= \sum_{a \in A} \wt_a(\wt_a-1) + \sum_{\substack{a \in A \\ \wt_a > 1}} \nu \left(\prod_{\substack{b_i \bmod t \ = \ a \\ b_j \bmod t \ = \ a \\ i \neq j}} \left( \frac{b_i  - b_j }{t} \right) \right) .
\end{align*}

Set $d(\textup{root of }T_B) = \sum_{a \in A} \wt_a(\wt_a-1)$. Pick an element $b_i$ belong to the residue class $a \in A$ of weight strictly bigger than $1$. The subtree corresponding to the residue $a$ can  naturally be identified with the dual graph of $\mathrm{Bl}(g_a)_s$ for the polynomial $\displaystyle{g_a = \prod_{b_j \bmod t \ = \ a} (x- \tfrac{b_j -b_i }{t})}$. Let $d_a$ denote the discriminant of $g_a$. Then,
\[ \nu(d_f) = \sum_{a \in A} \wt_a(\wt_a-1) + \sum_{a \in A} \nu(d_a) .\]
Now recursively decompose $\nu(d_a)$ as a sum over the vertices of the dual graph of $\mathrm{Bl}(g_a)_s$. Identifying the dual graph of $\mathrm{Bl}(g_a)_s$ with the corresponding subtree in $T_B$, this gives us a way to decompose the minimal discriminant as a sum over the vertices of $T_B$.

We will now prescribe a way to attach weights to the vertices of $T_B$ and give an explicit formula for $d(v)$ in terms of these weights.
\subsection{Weight of a vertex}
Suppose $v \in V(T_B)$. Let $T_v$ be the complete subtree of $T_B$ with root $v$. The complete subtree of $T_B$ with root $v$ has as its set of vertices all those vertices of $T_B$ whose path to the root crosses $v$. There is an edge between two vertices in this subtree if there is an edge between them when considered as vertices of $T_B$.  

For each vertex $v$ of $T_B$, define the weight of the vertex $\wt_v$ as follows: Let $J$ be the set of all irreducible components of $(\mathrm{Bl}_n{\P^1_R})_s$ corresponding to the vertices that are in $T_v$. Let $\wt_v$ equal the total number of irreducible horizontal divisors that occur in the divisor $(f)$ in $\mathrm{Bl}_n(\P^1_R)$, not counting the divisor $\overline{\{\infty\}}$, that intersect any of the irreducible components in $J$. 
Thus, if $\Gamma_v$ was obtained as the exceptional divisor in the blow-up of an intermediate iterated blow-up $Z$ between $\mathrm{Bl}_n(\P^1_R)$ and $\P^1_R$ at a smooth closed point of the special fiber $z \in Z_s$, then $\wt_v$ is exactly the number of irreducible horizontal divisors that occur in $(f)$ that intersect $Z_s$ at $z$. This in turn implies the following:
\begin{lemma}\label{weight}
If $v \in V(T_B)$, then $\wt_v \geq 2$.
\end{lemma}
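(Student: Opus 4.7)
The plan is to reduce the lemma to the defining property of the iterated blow-up $\mathrm{Bl}_n(\P^1_R)$: every blow-up in the chain $\P^1_R \leftarrow \mathrm{Bl}_1(\P^1_R) \leftarrow \cdots \leftarrow \mathrm{Bl}_n(\P^1_R)$ is performed only at a closed point where at least two of the strict transforms of the horizontal divisors $D_i = \overline{\{b_i\}}$ meet. I would treat the root and non-root vertices separately.

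First, suppose $v$ is the root $v_0$ of $T_B$. Then $T_{v_0} = T_B$, so the set $J$ from the definition of $\wt_v$ consists of every irreducible component of $(\mathrm{Bl}_n(\P^1_R))_s$. Since each of the $2g+2$ horizontal divisors $\overline{\{b_i\}}$ has nonempty intersection with the special fiber, each of them meets some component in $J$. Hence $\wt_{v_0} = 2g+2 \geq 6 \geq 2$, using the standing hypothesis $g \geq 2$.

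Now suppose $v$ is not the root. Then $\Gamma_v$ arose as the exceptional divisor of the blow-up of some intermediate scheme $Z = \mathrm{Bl}_j(\P^1_R)$ (with $0 \leq j < n$) at a smooth closed point $z \in Z_s$. The alternative formula for $\wt_v$ recorded immediately before the lemma asserts that $\wt_v$ equals the number of strict transforms of the $D_i$ in $Z$ that pass through $z$. By the construction of Section~\ref{construct}, $z$ is by design a point where at least two of the strict transforms of the $D_i$ in $Z$ intersect — this is the very reason we blew up at $z$. Hence at least two horizontal divisors contribute to $\wt_v$, giving $\wt_v \geq 2$.

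The argument is essentially immediate once the alternative description of $\wt_v$ is invoked, and there is no substantive obstacle. If I had to pinpoint one subtlety, it would be the justification of that alternative description itself, which amounts to observing that the components of $(\mathrm{Bl}_n(\P^1_R))_s$ corresponding to vertices of the subtree $T_v$ are exactly those lying over the blow-up point $z$ under the projection $\mathrm{Bl}_n(\P^1_R) \to Z$; this is what allows one to replace "intersects some component of $J$" by "passes through $z$ after pushing down to $Z$".
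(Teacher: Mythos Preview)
Your argument is correct and matches the paper's approach: the paper does not even supply a formal proof, simply noting that the alternative description of $\wt_v$ (as the number of horizontal divisors of $(f)$ passing through the blow-up center $z$) ``in turn implies'' the lemma, since by construction one only blows up at points where at least two such divisors meet. Your write-up is in fact slightly more complete, since you treat the root vertex separately (where $\wt_{v_0}=2g+2$), a case the paper's one-line justification glosses over.
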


\subsection{Local contribution and weights}
\begin{lemma}\label{formulaford}
 For any vertex $v$ of $T_B$, 
 \[ d(v) = \sum_{w \in C(v)} \wt_w(\wt_w-1) .\] 
\end{lemma}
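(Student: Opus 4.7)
The plan is to induct on the depth of the rooted tree $T_B$ (the maximum number of edges in a root-to-leaf path). In the base case, when $T_B$ is a single vertex, all the $b_i$ have distinct residues modulo $t$, so $\mathrm{Bl}_n(\P^1_R) = \P^1_R$, the base case of the recursive definition gives $d(\textup{root}) = 0$, and the right-hand side is an empty sum, also zero.

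For the inductive step I would split according to whether $v$ is the root of $T_B$. If $v$ is the root, then by definition $d(v) = \sum_{a \in A} \wt_a(\wt_a-1)$. The summands with $\wt_a = 1$ vanish, and by the noted bijection between residues $a$ of weight strictly greater than $1$ and the children of the root, the surviving $a$ correspond bijectively to $C(v)$. It then suffices to verify $\wt_w = \wt_a$ for the child $w$ associated to $a$. Since every component in the subtree $T_w$ arises as an exceptional divisor from some blow-up above the closed point $(x=a,\,t=0) \in \P^1_R$, the horizontal divisors $(x - b_j)$ meeting any component of $T_w$ are exactly those with $b_j \equiv a \pmod t$, of which there are $\wt_a$.

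If $v$ is not the root, then $v$ lies in a unique subtree $T_a$ attached to the root. By the recursive definition, the value $d(v)$ for $v \in T_a$ equals the corresponding value computed for the polynomial $g_a = \prod_{b_j \equiv a \,(\textup{mod } t)}(x - (b_j - b_i)/t)$ on the tree $T_{\mathrm{Bl}(g_a)}$, canonically identified with $T_a$. Since this tree has strictly smaller depth than $T_B$, the inductive hypothesis applied to $g_a$ gives
\[ d(v) = \sum_{w \in C(v)} \wt_w^{g_a}(\wt_w^{g_a}-1), \]
where $\wt_w^{g_a}$ denotes the weight computed inside $T_{\mathrm{Bl}(g_a)}$. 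It remains to check $\wt_w^{g_a} = \wt_w$ for every $w \in T_a$: the affine change of coordinates $x \mapsto (x - b_i)/t$ matches the horizontal divisors $(x - b_j)$ of $f$ with $b_j \equiv a \pmod t$ bijectively with the horizontal divisors of $g_a$, and is compatible with the identification of the blow-up trees, so the two weights count the same set.

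The main obstacle I expect is the bookkeeping in the non-root case: one has to make the canonical identification of $T_a \subset T_B$ with the full dual graph $T_{\mathrm{Bl}(g_a)}$ precise, and verify that it intertwines both the recursive definition of $d$ and the definition of the weights. Once this local-to-global compatibility is in place, the rest of the argument is a direct unwinding of the definitions.
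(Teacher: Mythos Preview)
Your proposal is correct and follows essentially the same approach as the paper: structural induction on the tree, verifying the formula at the root directly from the definition $d(\textup{root}) = \sum_{a \in A} \wt_a(\wt_a-1)$ together with the bijection between residues of weight $>1$ and children of the root, and handling non-root vertices by identifying the subtree $T_a$ with the dual graph of $\mathrm{Bl}(g_a)_s$ and checking that the weights agree under this identification so that the inductive hypothesis applies. The only cosmetic difference is that you induct on the depth of $T_B$ while the paper inducts on the number of vertices, but the substance---in particular the key compatibility of weights under the identification of subtrees, which you correctly flag as the main bookkeeping point---is identical.
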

\begin{proof}
This will once again proceed through an induction on the number of vertices of the tree. For the base case, note that the tree $T_B$ has only one vertex if and only if all the roots of the polynomial $f$ belong to distinct residue classes $\bmod t$ and in this case $d(v) = 0$. Now for the general case. It is clear that the equality holds for the root --- for a residue class $a \in A$ such that $\wt_a > 1$, the weight of the residue class as in the definition is just the weight of the subtree corresponding to the residue class. For any vertex $v$ at depth $1$ (by which we mean one of the nearest neighbours of the root) corresponding to a residue class $a$ such that $\wt_a > 1$, we first observe that the set of roots of the polynomial $\displaystyle{g_a = \prod_{b_j \bmod t \ = \ a} (x- \tfrac{b_j -b_i }{t})}$ corresponding to the residue class $a$ is in natural bijection with a subset of the horizontal divisors of $(f)$ -- namely the ones corresponding to the strict transforms of the divisors $(x-b_j )$ on $\
P^1_R$ for $b_j \bmod t = a$. These are the divisors that intersect the special fiber at one of the irreducible components corresponding to the vertices in this subtree with root $v$. These horizontal divisors are also in bijection with the horizontal divisors of the function $g_a$ different from $\overline{\{\infty\}}$ on $\mathrm{Bl}(g_a)$. The identification of horizontal divisors of $\mathrm{Bl}(g_a)$ and a subset of the horizontal divisors of $\mathrm{Bl}(f)$ is compatible with the identification of the subtree of $T_B$ with the dual graph of $\mathrm{Bl}(g_a)_s$. By this we mean that the set of horizontal divisors intersecting the irreducible component corresponding to any given vertex match up. This tells us that the weight of a vertex of the dual graph of $\mathrm{Bl}(g_a)_s$ equals the weight of the corresponding vertex in $T_B$. Since the lemma holds for the complete subtree at vertex $v$ by induction (where the weights to the vertices of $\mathrm{Bl}(g_a)_s$ are 
assigned using the horizontal divisors of $\mathrm{Bl}(g_a)$), we are done.
\end{proof}

\section{A combinatorial description of the local terms in the Deligne discriminant}\label{localcontribution}
The goal of this section is to obtain explicit formulae (Theorem~\ref{localDel}) for the local terms  appearing in the Deligne discriminant in terms of the combinatorics of the tree $T_B$ (Definition~\ref{lrv}). This involves a careful analysis of the special fiber of $X$ which we present as a series of lemmas.

\begin{lemma}\label{branchlocus} \hfill
\begin{enumerate}[\upshape (a)]
 \item The branch locus of the double cover $\psi_1: X \rightarrow Y$ is the set of all odd components of $Y_s$ along with the strict transforms of the horizontal divisors $(x-b_i)$ on $\P^1_R$. 
 \item If $\Gamma$ is an even component of $Y_s$ and $\Gamma'$ is an irreducible component of the branch locus that intersects $\Gamma$, then $\Gamma.\Gamma' = 1$.
\end{enumerate}
\end{lemma}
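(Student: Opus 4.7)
The plan is to treat the two parts in turn. For (a), the proof of Lemma~\ref{useful} already pins down the branch locus locally: at a closed point of $Y$ through which every component of $(f)$ has even multiplicity, the normalization is \'{e}tale; at a closed point lying on a unique odd component (with local equation $a$), the normalization is ramified precisely along $\{a = 0\}$. Since by construction of $Y$ no two odd irreducible components of $(f)$ meet, these two local pictures are exhaustive. Hence the branch locus consists of every irreducible component of $(f)$ on $Y$ of odd multiplicity. The strict transforms $\tilde D_i$ of the horizontal divisors $(x-b_i)$ each appear with multiplicity one, and the divisor at infinity appears with even multiplicity $-(2g+2)$; the remaining odd components of $(f)$ are precisely the odd vertical components of $Y_s$ by definition. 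This gives (a).

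For (b), I would split according to the type of $\Gamma'$. First suppose $\Gamma'$ is an odd component of $Y_s$. Since $Y$ is a good model, distinct components of $Y_s$ meet transversely. Since $Y$ is obtained from $\P^1_R$ by a sequence of blow-ups at smooth points, each of which either adds a leaf or subdivides an edge of the dual graph, $T_Y$ is a tree; in particular $\Gamma$ and $\Gamma'$ share at most one intersection point. Thus $\Gamma.\Gamma' = 1$.

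Next suppose $\Gamma' = \tilde D_i$. The key input is that $\tilde D_i \cong \Spec R$ as a closed subscheme of $Y$, since it is the scheme-theoretic closure of the $K$-rational point $b_i$ on the proper $S$-scheme $Y$. Let $q_i$ denote the unique closed point of $\tilde D_i$ and let $\pi \in \O_{Y, q_i}$ be a local equation for $\tilde D_i$. Then $\O_{Y, q_i}/(\pi) \cong R$ forces $\mathfrak{m}_{Y, q_i} = (\pi, t)$, and hence $t \notin \mathfrak{m}_{Y, q_i}^2$. This last condition rules out both $q_i$ being a node of $Y_s$ and $q_i$ lying on a component of $Y_s$ of multiplicity at least two (in either case $t$ would lie in the square of the maximal ideal). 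So $q_i$ lies in the smooth locus of $Y_s$ on a single multiplicity-one component. If that component is $\Gamma$, then a local equation $\tau$ for $\Gamma$ agrees with $t$ up to a unit, so $(\pi, \tau) = (\pi, t) = \mathfrak{m}_{Y, q_i}$ and $\Gamma.\tilde D_i = \dim_k \O_{Y, q_i}/(\pi, \tau) = 1$.

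The only nonroutine ingredient is the isomorphism $\tilde D_i \cong \Spec R$, which forces $\pi, t$ to be a regular system of parameters at $q_i$ and thereby yields transversality of $\tilde D_i$ to $Y_s$. Everything else is a direct appeal to the good model property, the tree structure of $T_Y$, and the short cotangent-space computation at $q_i$.
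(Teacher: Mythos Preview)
Your argument is correct. Part (a) and the vertical case of (b) match the paper's proof essentially verbatim (the paper also appeals to the construction of $Y$ to get a single transverse intersection point between two components of $Y_s$; your explicit tree argument makes this step cleaner).

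For the horizontal case of (b) you take a genuinely different route. The paper bounds $\Gamma.\tilde D_i$ by comparing with the whole fiber and pushing forward along the blow-up $\pi\colon Y\to\P^1_R$: since $\pi_*\tilde D_i$ is the divisor $B_i=(x-b_i)$ on $\P^1_R$, the projection formula gives
\[
0<\Gamma.\tilde D_i\le Y_s.\tilde D_i=\pi^*(\P^1_R)_s.\tilde D_i=(\P^1_R)_s.B_i=1.
\]
You instead observe that $\tilde D_i$, being the closure of a $K$-rational point in the proper regular $S$-scheme $Y$, is a section $\Spec R\hookrightarrow Y$; the resulting regular system of parameters $(\pi,t)$ at its unique closed point forces that point to lie in the smooth locus of a multiplicity-one component and to meet it transversally. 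Your approach is more elementary (no $\pi_*$ on $\Pic$) and extracts more local information: it shows directly that $\tilde D_i$ cannot meet a node of $Y_s$ or a multiplicity-two component, facts the paper uses implicitly later. The paper's approach is shorter and would work even if one only knew $\tilde D_i$ as a divisor class rather than as an explicit section.
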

\begin{proof} \hfill
\begin{enumerate}[\upshape (a)]
 \item This is clear from the construction of $X$ as outlined in Lemma~\ref{useful}.
 \item From (a), it follows that $\Gamma$ does not belong to the branch locus and $\Gamma'$ is either an odd component of $Y_s$ or the strict transform of the horizontal divisor $(x-b_i)$ on $\P^1_R$ for some $b_i$. 
 
 Suppose $\Gamma'$ is an odd component of $Y_s$. It follows from the construction of $Y$ that if any two irreducible components of $Y_s$ intersect, then they intersect transversally and there is at most one point in the intersection. This implies that $\Gamma.\Gamma' = 1$. 
 
 Suppose $\Gamma'$ is the strict transform of the horizontal divisor $(x-b_i)$ on $\P^1_R$ for some $b_i$. Let $\pi: Y \rightarrow \P^1_R$ be the iterated blow-up map that we obtain from the construction of $Y$. Since $\pi$ is an iterated blow-up morphism,  $\Pic \P^1_R$ is a direct summand of $\Pic Y$, with a canonical projection map $\pi_* : \Pic Y \rightarrow \Pic \P^1_R$. Let $B_i$ denote the Weil divisor $(x-b_i)$ on $\P^1_R$. Then $\pi_* \Gamma' = B_i$.
 \[ 0 < \Gamma.\Gamma' \leq Y_s.\Gamma' = \pi^*(\P^1_R)_s.\Gamma' = (\P^1_R)_s.(\pi_* \Gamma') = (\P^1_R)_s.B_i = 1  . \]
 This implies that $\Gamma.\Gamma' = 1$. \qedhere
\end{enumerate}
\end{proof}

\begin{lemma}\label{oddeven}
 Let $v \in V(T_B)$ and $w \in C(v)$. Then $w(f) = v(f) + \wt_w$. (Here $v(f)$ and $w(f)$ denote the valuation of $f$ in the discrete valuation rings corrresponding to the irreducible divisors $\Gamma_v$ and $\Gamma_{w}$ of $\mathrm{Bl}_n(\P^1_R)$). In particular, if $v$ is even, then $w$ is odd if and only if $\wt_w$ is odd; if $v$ is odd, then $w$ is odd if and only if $\wt_w$ is even. 
\end{lemma}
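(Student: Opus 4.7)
The plan is to determine which point gets blown up to create $\Gamma_w$ and then use the effect of that blow-up on the divisor $(f)$. Write the map $\mathrm{Bl}_n(\P^1_R) \to \P^1_R$ as a composition of blow-ups, and let $Z$ be the intermediate model such that $\Gamma_w$ appears as the exceptional divisor of the blow-up $Z' \to Z$ at some closed point $z \in Z_s$. By the construction of Section~\ref{construct}, $z$ is a point where at least two horizontal divisors $(x-b_i)$ meet on $Z$. First, I would check by induction on the blow-up step that $z$ lies on a unique irreducible component of $Z_s$, namely the strict transform of $\Gamma_v$, so that $v$ is indeed the parent of $w$ in $T_B$. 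The point is that every node of $Z_s$ arises as the intersection of a parent-child pair in $T_B$ and lies on no horizontal divisor, so subsequent blow-ups --- which only ever take place at intersections of horizontal divisors --- never touch such a node.

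Next, I would apply a direct generalization of Lemma~\ref{valf}: if $y$ is a closed point of a regular arithmetic surface lying on components $C_0, \dots, C_k$ of the support of $(f)$, each regular at $y$, then the exceptional divisor $E$ of the blow-up at $y$ satisfies $\mathrm{ord}_E(f) = \sum_{i} r_{C_i}$, where $r_{C_i}$ is the order of vanishing of $f$ along $C_i$. This follows from the standard identity $\pi^*\,\mathrm{div}(f) = \widetilde{\mathrm{div}(f)} + m_y(\mathrm{div}(f))\, E$ together with $m_y(C_i) = 1$ when $C_i$ is regular at $y$. In our setting the strict transform of $\Gamma_v$ contributes $v(f)$ (it is regular at $z$) and each of the $m$ horizontal divisors through $z$ contributes $1$ (each is regular, being a copy of $\Spec R$), giving $w(f) = v(f) + m$.

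The final step is to identify $m$ with $\wt_w$. The $m$ horizontal divisors through $z$ are precisely those whose strict transforms in $Z'$ meet $\Gamma_w$. Any later blow-up performed on $\Gamma_w$ or one of its descendants in $T_B$ can only redistribute these $m$ divisors among $\Gamma_w$ and its descendants; it cannot expel them outside the subtree $T_w$ nor import new ones into it. Conversely, no horizontal divisor missing $z$ in $Z$ ever comes to meet a component in $T_w$. Hence $m$ equals the number of horizontal divisors of $\mathrm{Bl}_n(\P^1_R)$ meeting some component of $T_w$, namely $\wt_w$. Combined with the previous step, this yields $w(f) = v(f) + \wt_w$, and the two parity assertions drop out immediately since $w(f)$ has the same parity as $v(f) + \wt_w$.

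I expect the main obstacle to be the combinatorial bookkeeping in the third step --- carefully verifying that no horizontal divisor away from $z$ in $Z$ ever migrates into the subtree $T_w$ under later blow-ups, and that the $m$ divisors through $z$ are neither lost nor duplicated. The other ingredients (uniqueness of the component through $z$, and the generalization of Lemma~\ref{valf}) are routine once one accepts the inductive structure of the construction.
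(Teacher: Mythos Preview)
Your proposal is correct and follows essentially the same line as the paper: identify the intermediate model $Z$ and the smooth point $z \in Z_s$ whose blow-up produces $\Gamma_w$, then compute the order of $f$ on the exceptional divisor as the multiplicity $\mu_z(f) = v(f) + \wt_w$ using the factorization of $f$ in the regular local ring $\O_{Z,z}$. The only difference is packaging: the paper records the identification of $\wt_w$ with the number of horizontal divisors through $z$ already in the paragraph defining $\wt_v$ (just before Lemma~\ref{weight}), so its proof simply cites that fact rather than carrying out your third-step bookkeeping; and the paper computes $\mu_z(f)$ directly from the UFD property of $\O_{Z,z}$ rather than phrasing it as a multi-component generalization of Lemma~\ref{valf}, but these amount to the same calculation.
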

\begin{proof}
The scheme $\mathrm{Bl}_n(\P^1_R)$ was constructed as an iterated blow-up of $\P^1_R$. There exist intermediate iterated blow-ups $Z'$ and $Z$ of $\P^1_R$ with iterated blow-up maps $\mathrm{Bl}_n(\P^1_R) \rightarrow Z'$, $Z' \rightarrow Z$ and $Z \rightarrow \P^1_R$ such that 
\begin{enumerate}[\upshape (a)]
 \item The scheme $Z'$ is the blow-up of $Z$ at a smooth closed point $z$ of the special fiber $Z_s$. 
 \item The divisor $\Gamma_v \subset \mathrm{Bl}_n(\P^1_R)$ is the strict transform of a vertical divisor $D$ on $Z$ under the morphism $\mathrm{Bl}_n(\P^1_R) \rightarrow Z$. 
 \item $z \in D$.
 \item The divisor $\Gamma_{w} \subset \mathrm{Bl}_n(\P^1_R)$ is the strict transform of $E$ under the morphism $\mathrm{Bl}_n(\P^1_R) \rightarrow Z'$, where $E$ denotes the exceptional divisor of $Z' \rightarrow Z$.
\end{enumerate}
The valuation of $f$ along $E$ equals the multiplicity $\mu_z(f)$ (that is, the largest integer $m$ such that $f \in \mathfrak{m}_{Z,z}^m \setminus \mathfrak{m}_{Z,z}^{m+1}$). There are $\wt_w$ distinct irreducible horizontal divisors of $(f)$ that intersect $Z_s$ at $z$, and $z$ is a smooth point on each of these divisors. This in particular implies that a uniformizer for each of the corresponding discrete valuation rings is in $\mathfrak{m}_{Z,z} \setminus \mathfrak{m}_{Z,z}^2$. From the factorization of $f$ and the fact that $\O_{Z,z}$ is a regular local ring (in particular, a unique factorization domain), one can deduce that $w(f) = \mu_z(f) = v(f) + \wt_w$. This implies that $w(f)$ and $\wt_w$ have the same parity if $v(f)$ is even and have opposite parity if $v(f)$ is odd.
\end{proof}

\begin{defn}\label{lrv}
\textup{Suppose $v \in V(T_B)$. Let $r_v$ be the total number of children of $v$ of odd weight, and let $s_v$ be the total number of children of $v$ of even weight. Let $l'_v$ equal the number of horizontal divisors of $(f)$ different from $\overline{\{ \infty \}}$ passing through $\Gamma_v$ and let $l_v = l'_v+r_v$. For a vertex $v$ of $T_B$ (or of $T_Y$) not equal to the root, let $p_v$ denote the parent of $v$.}  
\end{defn}

Since $\mathrm{Bl}_n(\P^1_R)$ was obtained by iteratively blowing up a regular scheme at smooth rational points on the special fiber, all the components of its special fiber are isomorphic to $\P^1_{k}$ and $X_s$ is reduced. Similarly, all the components of the special fiber of $Y$ are also isomorphic to $\P^1_k$, though $Y_s$ may no longer be reduced.

\begin{lemma}\label{lmod2}
Let $v \in V(T_B)$ be an even vertex. Then $l_v$ is odd if and only if $v$ has an odd parent. In particular, if $v$ is the root, then $l_v$ is even.
\end{lemma}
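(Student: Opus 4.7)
The plan is to reduce the parity of $l_v$ to the parity of the weight $\wt_v$, and then to control the latter via Lemma~\ref{oddeven}.

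The first step is the recursion
\[ \wt_v \;=\; l'_v + \sum_{w \in C(v)} \wt_w .\]
This follows from the definition of $\wt_v$ together with the fact that each horizontal divisor of $(f)$ other than $\overline{\{\infty\}}$ meets $(\mathrm{Bl}_n(\P^1_R))_s$ at a single point lying on a unique irreducible component: the blow-up process producing $\mathrm{Bl}_n(\P^1_R)$ was carried out precisely so that no two such horizontal divisors intersect, hence each meets the special fiber at a smooth point of exactly one component. Consequently, partitioning the horizontal divisors meeting components of $T_v$ according to whether they meet $\Gamma_v$ itself or a component in one of the subtrees $T_w$ for $w \in C(v)$ gives the displayed identity.

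Reducing this identity modulo $2$, the summands $\wt_w$ contribute an odd value exactly for the $r_v$ children of odd weight, so
\[ \wt_v \;\equiv\; l'_v + r_v \;=\; l_v \pmod{2}. \]
Thus the parity of $l_v$ is the parity of $\wt_v$, and it suffices to determine when $\wt_v$ is odd.

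If $v$ is not the root, I would invoke Lemma~\ref{oddeven} on the edge $p_v \to v$ to write $v(f) = p_v(f) + \wt_v$, so $\wt_v \equiv v(f) + p_v(f) \pmod{2}$. Since $v$ is even by hypothesis, $v(f)$ is even, and hence $\wt_v$ is odd if and only if $p_v(f)$ is odd, i.e., if and only if $p_v$ is an odd vertex. For the root case, every one of the $2g+2$ horizontal divisors of $(f)$ different from $\overline{\{\infty\}}$ intersects some component of $(\mathrm{Bl}_n(\P^1_R))_s$, hence some component in $T_{\mathrm{root}} = T_B$; therefore $\wt_{\mathrm{root}} = 2g+2$ is even, which gives $l_{\mathrm{root}}$ even.

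The only nontrivial step is the recursive decomposition of $\wt_v$; everything else is a direct application of Lemma~\ref{oddeven} and parity bookkeeping. The key conceptual point is that the definition of $l_v = l'_v + r_v$ is tailored so that $l_v$ and $\wt_v$ agree modulo $2$, converting a combinatorial count into a question about valuations of $f$ along the components of the tree.
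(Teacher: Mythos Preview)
Your proof is correct and takes a genuinely different route from the paper. The paper's argument is geometric: since $v$ is even, $\psi_2$ is an isomorphism above a neighbourhood of $\Gamma_v$, so there is a single component $F$ of $Y_s$ lying over $\Gamma_v$; the Riemann--Hurwitz formula applied to the double cover $\psi_1^{-1}(F) \to F \cong \P^1_k$ forces the branch locus to meet $F$ at an even number of points, and counting these points via Lemma~\ref{branchlocus} and Lemma~\ref{oddeven} gives $l_v + 1$ or $l_v$ according as $p_v$ is odd or not. Your argument, by contrast, stays entirely within the combinatorics of $T_B$: the recursion $\wt_v = l'_v + \sum_{w\in C(v)} \wt_w$ (which the paper itself invokes later, in the proof of Lemma~\ref{weightls}) gives $\wt_v \equiv l_v \pmod 2$, and then a single application of Lemma~\ref{oddeven} to the edge $p_v \to v$ (together with the elementary observation $\wt_{\mathrm{root}} = 2g+2$) determines the parity of $\wt_v$. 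Your route is more elementary---it avoids Riemann--Hurwitz and any analysis of the branch locus---while the paper's version has the advantage that it simultaneously establishes the exact count of branch points on $F$, a count reused verbatim in Lemma~\ref{evenprelim}(e).
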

\begin{proof}
Suppose $v \in V(T_B)$ is even. Then $\psi_2^{-1}(\Gamma_v)$ is a single irreducible component $F$ of $Y_s$ and $\psi_2$ is an isomorphism above a neighbourhood of $\Gamma_v$. Using Lemma~\ref{branchlocus}(b) and the Riemann-Hurwitz formula, we see that the branch locus of $\psi_1$ has to intersect $F$ at an even number of points. Since $v$ is even, Lemma~\ref{branchlocus}(a) and Lemma~\ref{oddeven} imply that $F$ intersects the branch locus at $l_v+1$ points if $v$ has an odd parent, and at $l_v$ points otherwise.
\end{proof}

\begin{lemma}\label{oddY}
 A component of $Y_s$ is odd if and only if it is the strict transform of an odd component of $(\mathrm{Bl}_n(\P^1_R))_s$.
\end{lemma}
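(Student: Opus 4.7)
The plan is to unwind the construction of $Y$ from $\mathrm{Bl}_n(\P^1_R)$ and use Lemma~\ref{valf} to control the order of vanishing of $f$ along every component of $Y_s$. Recall that $Y$ is obtained from $\mathrm{Bl}_n(\P^1_R)$ by a single blow-up at a finite set of closed points chosen so that no two components on which $f$ vanishes to odd order intersect. Consequently, the irreducible components of $Y_s$ fall into two classes: (i) strict transforms of components of $(\mathrm{Bl}_n(\P^1_R))_s$, and (ii) exceptional divisors of this final blow-up.

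First I would observe that for a component of type (i), the order of vanishing of $f$ is preserved. Indeed, if $\Gamma \subset \mathrm{Bl}_n(\P^1_R)$ is an irreducible vertical divisor and $\widetilde{\Gamma} \subset Y$ its strict transform, then the generic point of $\widetilde{\Gamma}$ maps to the generic point of $\Gamma$, inducing an isomorphism of the corresponding DVRs (blowing up at a closed point does not change the local ring at a codimension-one point not lying over that closed point). Hence $\widetilde{\Gamma}(f) = \Gamma(f)$ and the parity of the order of vanishing is unchanged.

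Next I would analyze components of type (ii). Every point blown up lies on at least two divisors occurring in $(f)$ with odd coefficient, and by construction those are the only divisors occurring in $(f)$ through that point: the possibilities are (a) two odd components of $(\mathrm{Bl}_n(\P^1_R))_s$ meeting, (b) an odd component meeting a strict transform of a horizontal divisor $(x-b_i)$ (which occurs in $(f)$ with coefficient $1$), or (c) two horizontal divisors meeting, which is ruled out by the definition of $\mathrm{Bl}_n(\P^1_R)$. In cases (a) and (b), the two divisors meet transversally at the blown-up point (since $\mathrm{Bl}_n(\P^1_R)$ is a good model of $\P^1_K$) and no other divisor in $(f)$ passes through that point. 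Lemma~\ref{valf}(b) then gives that the exceptional divisor $E$ satisfies $E(f) = r_C + r_D$, a sum of two odd integers, hence even.

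Combining the two steps, every exceptional component of $Y_s$ introduced in the last blow-up is even, while strict transforms retain their parity. Therefore a component of $Y_s$ is odd if and only if it is the strict transform of an odd component of $(\mathrm{Bl}_n(\P^1_R))_s$, which is exactly the claim. The only point requiring any care is the case analysis of the blown-up points in step~(ii), to make sure no odd-order divisors other than the two that forced the blow-up pass through them; this follows immediately from the fact that the previous stage $\mathrm{Bl}_n(\P^1_R)$ has already separated the horizontal divisors and its vertical components meet at most pairwise and transversally.
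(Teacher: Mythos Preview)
Your proposal is correct and follows essentially the same approach as the paper's proof: show that every exceptional divisor created in the passage from $\mathrm{Bl}_n(\P^1_R)$ to $Y$ is even by applying Lemma~\ref{valf}(b), while strict transforms manifestly preserve the order of vanishing of $f$. The paper's proof is a one-liner that simply asserts each blown-up point lies at the intersection of two odd components (meaning components of the divisor $(f)$); your version spells out the case analysis (two odd vertical components, or one odd vertical and one horizontal $(x-b_i)$) and explicitly notes why no third divisor of $(f)$ passes through such a point, which is exactly what is needed for Lemma~\ref{valf}(b) to apply.
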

\begin{proof}
 The exceptional divisors that arise when we blow up $\mathrm{Bl}_n(\P^1_R)$ to obtain $Y$ are all even by Lemma~\ref{valf}, as every point that is blown up in $\mathrm{Bl}_n(\P^1_R)$ is at the intersection of two odd components.
\end{proof}

\begin{lemma}\label{invimgphi} \hfill
\begin{enumerate}[\upshape (a)]
 \item Let $v' \in V(T_Y)$. Then $\#\phi_1^{-1}(v') = 1$ if $\Gamma_{v'}$ intersects the branch locus of $\psi_1$, and $\# \phi_1^{-1}(v') = 2$ otherwise. If $\# \phi_1^{-1}(v') = 2$, then both irreducible components of $X_s$ corresponding to vertices in $\phi_1^{-1}(v')$ are isomorphic to $\P^1_k$.
 \item  Suppose $v \in V(T_B)$ is an even vertex. Then $\# \phi^{-1}(v)$ is either $1$ or $2$. It is $1$ if and only if $\psi_2^{-1}(\Gamma_v)$ intersects the branch locus of $\psi_1$. If $\# \phi^{-1}(v) = 2$, then both irreducible components of $X_s$ corresponding to vertices in $\phi^{-1}(v)$ are isomorphic to $\P^1_k$.
 \item Suppose $v \in V(T_B)$ is odd. Let $v' \in V(T_Y)$ be the vertex corresponding to the strict transform of $\Gamma_v$ in $Y$. Let
 \begin{align*}
  T_0 &= \{ v' \}, \\
  T_1 &= \{ u' \in \phi_2^{-1}(v) \ | \ \psi_2(\Gamma_{u'}) = \Gamma_v \cap \Gamma_u \ \textup{for some odd } \ u \in C(v) \}, \ \textup{and}, \\
  T_2 &= \bigg\{ {u' \in \phi_2^{-1}(v) \ \left|  \ \begin{aligned} &\psi_2(\Gamma_{u'}) = \Gamma_v \cap H \ \textup{for some irreducible horizontal divisor} \\ &\textup{ $H \neq  \overline{\infty}$ appearing in the divisor of $(f)$}. \end{aligned} \right.  }  \bigg\}  
 \end{align*}
 Let $S_0 = \phi_1^{-1}(T_0) ,S_1 = \phi_1^{-1}(T_1)$ and $S_2 = \phi_1^{-1}(T_2)$. Then
 \begin{enumerate}[(i)]
 \item The sets $T_0,T_1$ and $T_2$ form a partition of $\phi_2^{-1}(v)$. Hence $\{ S_0,S_1,S_2 \}$ is a partition of $\phi^{-1}(v)$.
 \item We have that $\# S_0 = \# T_0 = 1$. Suppose $S_0 = \{ \tilde{v} \}$.  Then $v'$ is odd, $m_{\tilde{v}} = 2$, and
 \[ S_0 =\{ v'' \in \phi^{-1}(v) \ | \ \ \psi(\Gamma_{v''})\ \textup{is not a point} \}. \] 
 \item We have that $\# S_1 = \# T_1 = s_v$. If $v'' \in S_1$, then $m_{v''} = 2$. If $u' \in T_1$, then $u'$ is not a leaf in $T_Y$.
 \item We have that $\# S_2 = \# T_2 = l'_v$. If  $v'' \in S_2$, then $m_{v''} = 1$.
 \item We have that
 \[ T_2 = \{ u' \in \phi^{-1}(v) \ | \ u' \ \textup{is an even leaf of}\ T_Y \} .\]
 \item The map $\phi_1$ induces an isomorphism of graphs between $\phi_2^{-1}(v)$ and $\phi^{-1}(v)$. 
 \item The graph $\phi_2^{-1}(v)$ is a tree with root $v'$ and the graph $\phi^{-1}(v)$ is a tree with root $\psi_1^{-1}(\Gamma_{v'})$.
 \item If $v'' \in \phi^{-1}(v)$, then $\Gamma_{v''} \isom \P^1_k$.
 \end{enumerate}
 \end{enumerate}
\end{lemma}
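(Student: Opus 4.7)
The proof splits naturally into parts (a), (b), (c). Parts (a) and (b) should come first as essentially direct consequences of the geometry of a degree $2$ cover.

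For part (a), I would work in a neighborhood of the generic point of $\Gamma_{v'}$. If $\Gamma_{v'}$ is disjoint from the branch locus of $\psi_1$, then on an affine open $U \subset Y$ containing $\Gamma_{v'}$ and meeting no component of the branch locus, the map $\psi_1^{-1}(U)\to U$ is finite étale of degree $2$, hence splits after restriction to the simply-connected curve $\Gamma_{v'}\cong\mathbb{P}^1_k$. This produces two disjoint copies of $\Gamma_{v'}$, each a component of $X_s$ isomorphic to $\mathbb{P}^1_k$. If instead $\Gamma_{v'}$ meets the branch locus, the cover is ramified above some point of $\Gamma_{v'}$ and $\psi_1^{-1}(\Gamma_{v'})$ is connected, giving a single component. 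For part (b), the key observation is that every closed point blown up in constructing $Y$ from $\mathrm{Bl}_n(\mathbb{P}^1_R)$ lies on an \emph{odd} component (it is either the intersection of two odd components or of an odd component with a horizontal divisor of $(f)$). Consequently, if $v$ is even, then $\psi_2$ is an isomorphism in a neighborhood of the interior of $\Gamma_v$, and any exceptional divisor arising from an intersection $\Gamma_v\cap\Gamma_{p_v}$ is sent by $\phi_2$ to the parent vertex $p_v$, not to $v$. Hence $\phi_2^{-1}(v)=\{v'\}$ for the strict transform $v'$, and (b) reduces to (a).

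Part (c) is the real work. I would first enumerate $\phi_2^{-1}(v)$ by classifying, in the construction of $Y$ from $\mathrm{Bl}_n(\mathbb{P}^1_R)$, which blow-up centers lie on $\Gamma_v$ (the odd component): the strict transform itself contributes the single element of $T_0$; intersections $\Gamma_v \cap \Gamma_u$ for odd children $u \in C(v)$ yield $T_1$; and intersections $\Gamma_v\cap H$ for horizontal components $H$ of $(f)$ other than $\overline{\{\infty\}}$ yield $T_2$. Intersections with the odd parent $p_v$ are excluded because those exceptionals are sent by $\phi_2$ to $p_v$, and no other blow-ups occur on $\Gamma_v$. This gives the partition of (i). To count, Lemma~\ref{oddeven} says that when $v$ is odd, odd children of $v$ are exactly the even-weight ones, so $\#T_1 = s_v$; and by definition $\#T_2 = l'_v$.

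The remaining assertions are then bookkeeping using Lemmas~\ref{construction}, \ref{branchlocus}, \ref{oddY}, and \ref{valf}. For (ii): $v'$ is odd by Lemma~\ref{oddY}, $\Gamma_{v'}$ lies in the branch locus, so (a) gives $\#\phi_1^{-1}(v')=1$, and $m_{\tilde v}=2$ via Lemma~\ref{construction}(b)(i); the characterization of $S_0$ holds because $v'$ is the unique element of $\phi_2^{-1}(v)$ whose image under $\psi_2$ is not a point. For $u'\in T_1$, $\Gamma_{u'}$ meets the strict transforms of both $\Gamma_v$ and $\Gamma_u$ (two odd, hence branch, components), so (a) yields $\#\phi_1^{-1}(u')=1$ and $u'$ has two $T_Y$-neighbors (not a leaf); Lemma~\ref{construction}(b)(ii) gives multiplicity $2$. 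For $u'\in T_2$, $\Gamma_{u'}$ meets the branch component $\Gamma_{v'}$, so (a) again gives a unique preimage; its image in $\mathrm{Bl}_n(\mathbb{P}^1_R)$ is a point on the single component $\Gamma_v$, so Lemma~\ref{construction}(b) forces $m_{v''}=1$. Assertion (v) follows by elimination: by Lemma~\ref{valf}(b), elements of $T_2$ are even (valuation $r_v+1$ is even) and leaves (only $\Gamma_{v'}$ as neighbor in $T_Y$), while $T_0$ is odd and $T_1$ consists of non-leaves. Statements (vi) and (vii) are formal: $\phi_1$ is a bijection of vertex sets preserving adjacency, and since distinct exceptionals over $\Gamma_v$ are disjoint but each meets $\Gamma_{v'}$, the graph $\phi_2^{-1}(v)$ is a star rooted at $v'$. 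Finally for (viii), each $\Gamma_{u'}$ is $\mathbb{P}^1_k$, and $\Gamma_{v''}\to\Gamma_{u'}$ is either an isomorphism on reduced schemes (when $u'=v'$ is odd, by explicit local computation of the normalization $\mathcal{O}_{Y,\eta}[w]/(w^2-u\pi)$ showing trivial residue extension) or a degree-$2$ cover of $\mathbb{P}^1_k$ ramified at exactly two points (when $u'\in T_1\cup T_2$ meets the branch locus in two points), giving $g(\Gamma_{v''})=0$ by Riemann--Hurwitz, hence $\Gamma_{v''}\cong\mathbb{P}^1_k$.

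The main obstacle is the combinatorial bookkeeping for the partition in (c)(i): one must carefully confirm, using the explicit sequence of blow-ups that defines $Y$ and the rule that $\phi_2$ sends an exceptional divisor lying over an intersection to the parent vertex, that every element of $\phi_2^{-1}(v)$ is accounted for in $T_0\sqcup T_1\sqcup T_2$ without double counting, and that the odd children of $v$ to which $T_1$ refers are precisely those of even weight (via Lemma~\ref{oddeven}). Once this combinatorial backbone is set up correctly, everything else is a direct application of the earlier lemmas.
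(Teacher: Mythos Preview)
Your proposal is correct and follows essentially the same route as the paper: part (a) via the dichotomy between split \'etale and ramified double covers of $\P^1_k$, part (b) by reducing to (a) using that $\psi_2$ is an isomorphism near an even component, and part (c) by enumerating the blow-up centers on $\Gamma_v$ to get the partition and then reading off (ii)--(viii) from Lemmas~\ref{construction}, \ref{branchlocus}, \ref{oddY}, \ref{oddeven}, and \ref{valf}, with Riemann--Hurwitz for (viii). One small point: in your discussion of (v) you write ``valuation $r_v+1$,'' but $r_v$ already denotes the number of odd-weight children (Definition~\ref{lrv}); you mean the order of vanishing $v(f)$ of $f$ along $\Gamma_v$ in the notation of Lemma~\ref{valf}, so it would be clearer to write $v(f)+1$.
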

\begin{proof} \hfill
\begin{enumerate}[\upshape (a)]
 \item All the components of $Y_s$ are isomorphic to $\P^1_k$. Let $v' \in V(T_Y)$. The vertices in $\phi_1^{-1}(v')$ are the irreducible components of $\psi_1^{-1}(\Gamma_{v'})$. If $v'$ is even, then Lemma~\ref{branchlocus}(b) tells us that if $\Gamma_{v'}$ intersects the branch locus at all, it intersects it transversally. Since ramified double covers of $\P^1_k$ are irreducible,  $\psi_1^{-1}(\Gamma_{v'})$ is irreducible if $\Gamma_{v'}$ intersects the branch locus. If $\Gamma_{v'}$ does not intersect the branch locus, as $\P^1_k$ has no connected unramified double covers, we see that $\psi_1^{-1}(\Gamma_{v'})$ has two irreducible components, both of which are isomorphic to $\P^1_k$. This implies that $\# \phi_1^{-1}(v')$ is $1$ if $\Gamma_{v'}$ intersects the branch locus of $\psi_1$ and is $2$ otherwise. 
 
 \item Suppose $v \in V(T_B)$ is even. Then $\psi_2^{-1}(\Gamma_v)$ is a single irreducible component $F$ of $Y_s$ and $\psi_2$ is an isomorphism above a neighbourhood of $\Gamma_v$. Let $v' \in V(T_Y)$ be such that $\Gamma_{v'} = F$. Then $\phi_2^{-1}(v) = \{ v' \}$ and $\phi^{-1}(v) = \phi_1^{-1}(v')$. Apply (a) to $v'$.
 
 \item 
 \begin{enumerate}[(i)]
  \item The component $\Gamma_{v'}$ of $Y_s$ satisfies $\psi_2(\Gamma_{v'}) = \Gamma_{v}$ and it is the only component of $Y_s$ with this property. It follows that $\phi_2(v') = v$. The other components $\Gamma_{u'}$ of $Y_s$ satisfying $\phi_2(u') = v$ are the exceptional divisors of $\psi_2: Y \rightarrow \mathrm{Bl}_n(\P^1_R)$ that get mapped to a point of $\Gamma_v$ that does not also lie on $\Gamma_{p_v}$.
  Since $Y$ is the blow-up of $\mathrm{Bl}_n(\P^1_R)$ at the finite set of points consistsing of the intersection of any two odd components of the special fiber and the intersection of an odd component of the special fiber with an irreducible horizontal divisor $H \neq \overline{\infty}$ appearing in $(f)$, it  follows that $\{ T_0,T_1,T_2 \}$ is a partition of $\phi_2^{-1}(v)$. Since $\phi^{-1}(v) = \phi_1^{-1}(\phi_2^{-1}(v))$, it follows that $\{ S_0,S_1,S_2 \}$ is a partition of $\phi^{-1}(v)$.
  
  \item Lemma~\ref{oddY} tells us $\Gamma_{v'}$ is odd, and Lemma~\ref{branchlocus}(a) tells us that $\psi_1$ is ramified over $\Gamma_{v'}$ and therefore $\psi_1^{-1}(\Gamma_{v'})$ is irreducible, and isomorphic to $\P^1_k$. It follows that $\# S_0 = \# T_0 = 1$. Since $\psi(\Gamma_{\tilde{v}}) = \psi_2(\Gamma_{v'}) = \Gamma_v$ and $v$ is odd, Lemma~\ref{construction}(b) tells us that $m_{\tilde{v}} = 2$. 
  
  Since $\psi(\Gamma_{\tilde{v}}) = \Gamma_v$, it follows that $\psi(\Gamma_{\tilde{v}})$ is not a point. Conversely, suppose $v'' \in \phi^{-1}(v)$ and $\psi(\Gamma_{v''})$ is not a point.  Since $\{ T_0,T_1,T_2 \}$ is a partition of $\phi_2^{-1}(v)$ by (a) and $\psi_2(\Gamma_{u'})$ is a point for $u' \in T_1 \cup T_2$, it follows that $v'' \in \phi_1^{-1}(T_0) = S_0$.
  
  \item For every odd $u \in C(v)$, there exists a unique exceptional curve $E$ of the blow-up $Y \rightarrow \mathrm{Bl}_n(\P^1_R)$ such that if $u' \in V(T_Y)$ is the vertex such that $\Gamma_{u'} = E$, then $u' \in \phi_2^{-1}(v)$ and $\psi_2(\Gamma_{u'}) = \Gamma_v \cap \Gamma_u$. This shows that 
  \[ \# T_1 = \# \textup{odd children of v} = s_v \ (\textup{by Lemma~\ref{oddeven} since $v$ is odd}) . \]
  
  Suppose $u' \in T_1$. Let $w \in C(v)$ be an odd vertex such that $\psi_1(\Gamma_{u'}) = \Gamma_v \cap \Gamma_w$. Let $w' \in V(T_Y)$ be the vertex corresponding to the strict transform of $\Gamma_{w}$ in $Y$.  Then $u' \in C(v')$ and $w' \in C(u')$. In particular, $u'$ is not a leaf. Since $v'$ is odd, Lemma~\ref{branchlocus}(a) and part (a) applied to $u'$ imply that $\# \phi_1^{-1}(u') = 1$. This tells us that $\# S_1 = \# T_1 = s_v$. 
  
  Suppose $v'' \in S_1$. Since $v$ is odd and $\phi_1(v'') \in T_1$, Lemma~\ref{construction}(b) implies that $m_{v''} = 2$.
  
  \item For every irreducible horizontal divisor $H \neq \overline{\infty}$ appearing in the divisor of $(f)$ on $\mathrm{Bl}_n(\P^1_R)$, there exists a unique exceptional curve $E$ of the blow-up $Y \rightarrow \mathrm{Bl}_n(\P^1_R)$ such that if $u' \in V(T_Y)$ is the vertex such that $\Gamma_{u'} = E$, then $u' \in \phi_2^{-1}(v)$ and $\psi_2(\Gamma_{u'}) = \Gamma_v \cap H$. This shows that
  \[ \# T_2 = \# \left\{ \begin{array}{l} \textup{irreducible horizontal divisors $H \neq \overline{\infty}$  appearing in } \\ \textup{$(f)$ on $\mathrm{Bl}_n(\P^1_R)$ that intersect $\Gamma_v$} \end{array} \right\} = l'_v . \]
  
  Suppose $u' \in T_2$. Then $u' \in C(v')$. Since $v'$ is odd, Lemma~\ref{branchlocus}(a) and part (a) applied to $u'$ imply that $\# \phi_1^{-1}(u') = 1$. This tells us that $\# S_2 = \# T_2 = l'_v$. 
  
  Suppose $v'' \in S_2$. Then $\phi_1(v'') \in T_2$. This implies that $\psi(\Gamma_{v''})$ is a point lying on a unique odd component of $(\mathrm{Bl}_n(\P^1_R))_s$, namely $\Gamma_v$. Lemma~\ref{construction}(b) implies that $m_{v''} = 1$.
  
  \item We already observed that $v'$ is the unique vertex of $T_0$ and that it is odd (by Lemma~\ref{oddY}). If $u' \in T_1$, then (iii) implies that $u'$ is not a leaf. This shows 
  \[  \{ u' \in \phi^{-1}(v) \ | \ u' \ \textup{is an even leaf of}\ T_Y \} \subset T_2 .\]
  If $u' \in T_2$, then Lemma~\ref{oddY} implies that $u'$ is even. Since $\Gamma_{u'}$ is the exceptional curve that is obtained by blowing up the point of intersection of an odd component and a horizontal divisor, $u'$ is a leaf. This shows the opposite inclusion. 
  
  \item Parts (ii),(iii),(iv) imply that $\# S_0 = \# T_0, \#S_1 = \# T_1$ and $\# S_2 = \# T_2$. Since $\phi_1$ is a surjection and $\{ T_0,T_1,T_2 \}$ is a partition of $\phi_2^{-1}(v)$, it follows that $\phi_1$ induces a bijection between $\phi^{-1}(v)$ and $\phi_2^{-1}(v)$. 
  
  If $u' \in T_1 \cup T_2$, let $u'' \in \phi^{-1}(v)$ be the unique vertex such that $\phi_1(u'') = u'$. Let $\{ \tilde{v} \} = S_0$. If $u' \in T_1 \cup T_2$, then $u' \in C(v')$. 
  
  If $u' \in T_1 \cup T_2$, then $\Gamma_{\tilde{v}} \cap \Gamma_{u''} = \psi_1^{-1}(\Gamma_{v'} \cap \Gamma_{u'}) \neq \emptyset$. This implies that $u'' \in N(\tilde{v})$ for any $u'' \in S_1 \cup S_2$. If $\tilde{v} \in C(u'')$ for some $u'' \in S_1 \cup S_2$, then Lemma~\ref{adjacency}(a) would imply $v' \in C(u')$. Since $u' \in C(v')$, it follows that $u'' \in C(\tilde{v})$. 
  
  If $u_1',u_2' \in T_1 \cup T_2$, then $\Gamma_{u_1'} \cap \Gamma_{u_2'} = \emptyset$. It now follows from Lemma~\ref{adjacency}(a) and the fact that $\phi_1(u_1''),\phi_1(u_2'') \in T_1 \cup T_2$ that if $u_1'',u_2'' \in S_1 \cup S_2$, then $\Gamma_{u_1''} \cap \Gamma_{u_2''} = \emptyset$. 
  
  Combining the previous three paragraphs, we get that $\phi_1$ induces an isomorphism of graphs between $\phi^{-1}(v)$ and $\phi_2^{-1}(v)$. 
  
  \item The proof of (vi) shows that if $u' \in T_1 \cup T_2$, then $u' \in C(v')$ and that if $u_1',u_2' \in T_1 \cup T_2$, then $\Gamma_{u_1'}$ and $\Gamma_{u_2'}$ do not intersect. It follows that $\phi_2^{-1}(v)$ is a tree with root $v'$. Since (vi) shows $\phi_1$ induces an isomorphism of graphs between $\phi^{-1}(v)$ and $\phi_2^{-1}(v)$, it follows that $\phi^{-1}(v)$ is a tree with root $\psi_1^{-1}(\Gamma_{v'})$.
  
  \item We already observed in the proof of (ii) that if $\{ \tilde{v} \} = S_0$, then $\Gamma_{\tilde{v}} \isom \P^1_k$. 
  
  Suppose $u'' \in S_1$. Let $u' = \phi_1(u'')$. Then $u' \in T_1$. Let $w \in C(v)$ be an odd vertex such that  $\psi_2(\Gamma_{u'}) = \Gamma_v \cap \Gamma_w$. Let $w'$ be the vertex corresponding to the strict transform of $\Gamma_{w}$ in $Y$. Then from the construction of $Y$, it follows that $N(u') = \{ v',w' \}, u' \in C(v')$ and $w' \in C(u')$. Lemma~\ref{oddY} implies that $v'$ and $w'$ are odd and $u'$ is even. Since $\Gamma_{u'} \isom \P^1_k$ and $\Gamma_{u'}$ intersects the branch locus transversally at two points (the points of intersection with $\Gamma_{v'}$ and $\Gamma_{w'}$) by Lemma~\ref{branchlocus}(a,b), the Riemann-Hurwitz formula implies that $\Gamma_{u''} = \psi_1^{-1}(\Gamma_{u'}) \isom \P^1_k$.
  
  Suppose $u'' \in S_2$ and $u' = \phi_1(u'')$. Then $u' \in T_2$. Like in the previous paragraph, we can argue that $\Gamma_{u'}$ intersects the branch locus at exactly two points, corresponding to the point of intersection of $\Gamma_{u'}$ with its odd parent $\Gamma_{v'}$ and the point of intersection of $\Gamma_{u'}$ with an irreducible horizontal divisor $H \neq \overline{\infty}$ appearing in the divisor of $(f)$, and that these intersections are transverse. The Riemann-Hurwitz formula would once again imply $\Gamma_{u''} \isom \P^1_k$. Since (vi) implies that $\{S_0,S_1,S_2\}$ is a partition of $\phi^{-1}(v)$, this completes the proof. \qedhere
 \end{enumerate}  
\end{enumerate} 
\end{proof}

We have the following restatement of Lemma~\ref{construction}(b) using $\phi$ and $\phi_1$.
\begin{lemma}\label{mult2}
 Suppose $v'' \in V(T_X)$. Then $m_{v''} = 2$ if and only if $\phi(v'')$ is odd and $\phi_1(v'')$ is not an even leaf. In particular, if $\phi(v'')$ is even, then $m_{v''} = 1$.
\end{lemma}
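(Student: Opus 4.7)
The plan is to read off the characterization of $m_{v''}$ from Lemma~\ref{construction}(b) by translating its two alternatives through the detailed description of $\phi_2^{-1}(v)$ provided by Lemma~\ref{invimgphi}. Set $v = \phi(v'')$ and $v' = \phi_1(v'')$, so that $\psi(\Gamma_{v''}) = \psi_2(\Gamma_{v'})$. The ``in particular'' clause is immediate from the main equivalence, since $\phi(v'')$ even forces the proposed condition to fail, and $m_{v''}$ is always $1$ or $2$ by Lemma~\ref{construction}(b).

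First I would dispose of the case $v$ even. By Lemma~\ref{invimgphi}(b), $\psi_2^{-1}(\Gamma_v)$ is the single component $\Gamma_{v'}$ and $\psi_2$ is an isomorphism near $\Gamma_v$, so $\psi(\Gamma_{v''}) = \Gamma_v$, an even component of $(\mathrm{Bl}_n(\P^1_R))_s$. Consequently neither alternative (i) nor (ii) of Lemma~\ref{construction}(b) holds and $m_{v''} = 1$, which matches the proposed characterization (the condition ``$\phi(v'')$ odd'' fails).

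Next I would treat $v$ odd using the partition $\phi_2^{-1}(v) = T_0 \sqcup T_1 \sqcup T_2$ from Lemma~\ref{invimgphi}(c). If $v' \in T_0$, then $\psi(\Gamma_{v''}) = \Gamma_v$ is odd, so Lemma~\ref{construction}(b)(i) gives $m_{v''} = 2$; and Lemma~\ref{oddY} shows $v'$ is odd, hence not an even leaf. If $v' \in T_1$, then $\psi(\Gamma_{v''}) = \Gamma_v \cap \Gamma_u$ for some odd $u \in C(v)$, so Lemma~\ref{construction}(b)(ii) gives $m_{v''} = 2$; and Lemma~\ref{invimgphi}(c)(iii) tells us $v'$ is not a leaf of $T_Y$. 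If $v' \in T_2$, then $\psi(\Gamma_{v''})$ is a point lying on the odd component $\Gamma_v$ and on a horizontal divisor, but on no second odd component, so neither (b)(i) nor (b)(ii) applies and $m_{v''} = 1$; furthermore Lemma~\ref{invimgphi}(c)(v) identifies $v'$ as an even leaf of $T_Y$.

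Combining these subcases, $m_{v''} = 2$ if and only if $v$ is odd and $v' \in T_0 \cup T_1$, if and only if $\phi(v'')$ is odd and $\phi_1(v'')$ is not an even leaf. The argument is essentially bookkeeping; the only subtlety I anticipate is that Lemma~\ref{invimgphi}(c)(v) characterizes even leaves only among vertices of $\phi_2^{-1}(v)$, but this is enough globally because $T_0$-vertices are strict transforms (hence odd by Lemma~\ref{oddY}), $T_1$-vertices have a further child in $T_Y$, and $T_2$-vertices are exceptional divisors of a single blow-up at the intersection of $\Gamma_v$ with a horizontal divisor and therefore are genuinely leaves of $T_Y$.
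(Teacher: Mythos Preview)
Your proof is correct and follows essentially the same approach as the paper: both reduce to Lemma~\ref{construction}(b) and then use the partition from Lemma~\ref{invimgphi}(c) to identify which vertices have $m_{v''}=2$. The only cosmetic difference is that you case-split on $v' \in T_0, T_1, T_2$ (the partition of $\phi_2^{-1}(v)$), whereas the paper phrases the same split via $S_0, S_1, S_2$ (the partition of $\phi^{-1}(v)$); these are equivalent by Lemma~\ref{invimgphi}(c)(vi).
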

\begin{proof}
Lemma~\ref{construction}(b) tells us that $m_{v''} = 2$ if and only if $\psi(\Gamma_{v''})$ is an odd component, or, if $\psi(\Gamma_{v''}) = \Gamma_v \cap \Gamma_w$ for two odd vertices $v,w \in V(T_B)$. 
Let $v = \phi(v'')$. If either of the conditions above hold, it follows from the definition of $\phi$ that the vertex $v$ is odd. So now assume $v$ is odd. Let $\{ S_0,S_1,S_2 \}$ be the partition of $\phi^{-1}(v)$ as in Lemma~\ref{invimgphi}(c). Lemma~\ref{invimgphi}(c)(ii,iii,iv) imply that $m_{v''} = 2$ if and only if $v'' \notin S_2$. Lemma~\ref{invimgphi}(c)(v) then tells us that $v'' \notin S_2$ if and only if $\phi_1(v'')$ is not an even leaf. 

Putting all this together, we get that $m_{v''} = 2$ if and only if $\phi(v'')$ is odd and $\phi_1(v'')$ is not an even leaf.
\end{proof}

\begin{lemma}\label{neigh} 
\begin{enumerate}[\upshape (a)] \hfill
\item Suppose $u'' \in V(T_X)$ and $\psi(\Gamma_{u''})$ is a point. 
 \begin{enumerate}[(i)]
 \item We have that $\# N(u'') = 1$ if $\psi(\Gamma_{u''})$ belongs to a unique odd component of $(\mathrm{Bl}_n(\P^1_R))_s$, and $\# N(u'') = 2$ otherwise. 
 \item If $\# N(u'') = 1$, then $\# C(u'') = 0$. If $\# N(u'') = 2$, then $\# C(u'') = 1$. 
 \item If $w'' \in N(u'')$, then $\phi(w'')$ is an odd vertex.
 \item If $w'' \in N(u'')$, then $m_{w''} = 2$. 
 \end{enumerate}
\item Suppose $u'' \in V(T_X)$, $w'' \in N(u'')$, $\phi(u'')$ is odd and $\phi(w'')$ is even. Then $\psi(\Gamma_{u''})$ is not a point, and the component $\Gamma_{u''}$ is the inverse image under $\psi_1$ of the strict transform of $\Gamma_{\phi(u'')}$.
\end{enumerate}
\end{lemma}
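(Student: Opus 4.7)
The plan is to analyze part (a) by splitting into two cases according to where the point $\psi(\Gamma_{u''})$ lies on $(\mathrm{Bl}_n(\P^1_R))_s$. Recall from Section~\ref{construct} that $Y \to \mathrm{Bl}_n(\P^1_R)$ is the blow-up at exactly those closed points of the special fiber which are either (Case 1) intersections $\Gamma_v \cap \Gamma_w$ of two odd components, or (Case 2) intersections $\Gamma_v \cap H$ of an odd component with an irreducible horizontal divisor $H \neq \overline{\{\infty\}}$ appearing in $(f)$. Since $\psi(\Gamma_{u''})$ is a point, $\Gamma_{\phi_1(u'')}$ must be an exceptional divisor $E$ of this blow-up, and these two cases match the sets $T_1$ and $T_2$ of Lemma~\ref{invimgphi}(c) applied to $v \colonequals \phi(u'')$. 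Case 1 is precisely when $\psi(\Gamma_{u''})$ lies on two odd components, and Case 2 is when it lies on a unique odd component.

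For (i) and (ii), the first step is to compute $N(u')$ and $C(u')$ in $T_Y$ for $u' \colonequals \phi_1(u'')$. By Lemma~\ref{valf}(b), $E$ is an \emph{even} component in both cases, meeting the branch locus at exactly two points (its intersections with the strict transforms of the two odd components in Case 1, or with the strict transform of $\Gamma_v$ and with $H$ in Case 2). A Riemann-Hurwitz computation on $E \cong \P^1_k$ then shows $\psi_1^{-1}(E)$ is irreducible, so $u''$ is the unique preimage of $u'$. In Case 1, $E$ meets only the strict transforms corresponding to $v', w'$, giving $N(u') = \{v', w'\}$; the blow-up subdivides the edge between $v'$ and $w'$, so $C(u') = \{w'\}$ with $v'$ the vertex closer to the root. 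In Case 2, $E$ is a new leaf attached to $v'$, so $N(u') = \{v'\}$ and $C(u') = \emptyset$. Lemma~\ref{oddY} shows $v'$ and $w'$ are odd, hence in the branch locus by Lemma~\ref{branchlocus}(a), so Lemma~\ref{invimgphi}(a) gives each a unique preimage under $\phi_1$. Combined with Lemma~\ref{adjacency}(a) and the fact that edge directions of $T_X$ are pulled back from $T_Y$, this yields $\#N(u'') = \#N(u')$ and $\#C(u'') = \#C(u')$, establishing (i) and (ii).

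Part (iii) is then immediate: any $w'' \in N(u'')$ satisfies $\phi_1(w'') \in \{v', w'\}$, whose images under $\phi_2$ are $v$ or $w$, both odd vertices of $T_B$. For (iv), I would invoke Lemma~\ref{mult2}: we have just shown $\phi(w'')$ is odd, and $\phi_1(w'')$ itself is odd in $T_Y$ by Lemma~\ref{oddY}, in particular not an even leaf, so $m_{w''} = 2$.

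Part (b) I would prove by contradiction using (a)(iii): if $\psi(\Gamma_{u''})$ were a point, every neighbor $w''$ would have $\phi(w'')$ odd, contradicting the hypothesis that $\phi(w'')$ is even. Hence $\psi(\Gamma_{u''})$ is not a point, and since $\phi(u'')$ is odd, Lemma~\ref{invimgphi}(c)(ii) identifies $u''$ as the unique vertex $\tilde{v} \in S_0$, which satisfies $\Gamma_{\tilde{v}} = \psi_1^{-1}(\Gamma_{v'})$ for $v'$ the strict transform of $\Gamma_{\phi(u'')}$. The main obstacle throughout is the careful bookkeeping in part (a): one must verify that the exceptional divisor $E$ is even so that Lemma~\ref{invimgphi}(a) yields the right preimage count, and one must correctly transfer the rooted-tree direction from $T_Y$ to $T_X$ via $\phi_1$ in order to distinguish children from arbitrary neighbors when counting.
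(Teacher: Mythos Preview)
Your proposal is correct and follows essentially the same route as the paper. The paper organizes part~(a) via the partition $S_1 \cup S_2$ of Lemma~\ref{invimgphi}(c), while you split directly by the two types of blow-up centers; these are the same case split, and the supporting lemmas you invoke (\ref{oddY}, \ref{branchlocus}, \ref{invimgphi}(a), \ref{adjacency}, \ref{mult2}) are exactly the ones the paper uses. One small point worth tightening: when you write that Lemma~\ref{adjacency}(a) together with unique preimages gives $\#N(u'') = \#N(u')$, note that Lemma~\ref{adjacency}(a) only bounds $N(u'')$ from above; the paper closes the gap by observing that the unique point of $\Gamma_{v'} \cap \Gamma_{u'}$ has a unique $\psi_1$-preimage, which must lie on both $\Gamma_{v''}$ and $\Gamma_{u''}$, forcing them to intersect.
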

\begin{proof} \hfill
\begin{enumerate}
 \item Let $v = \phi(u'')$. Since $\psi(\Gamma_{u''})$ is a point,  $v$ is odd. Construct the partition $S_0,S_1,S_2$ of $\phi^{-1}(v)$ as in Lemma~\ref{invimgphi}(c). Since $\psi(\Gamma_{u''})$ is a point, Lemma~\ref{invimgphi}(c)(ii) implies that $u'' \in S_1 \cup S_2$. 
 
If $u'' \in S_1$, then $\psi(\Gamma_{u''}) = \Gamma_v \cap \Gamma_w$ for an odd vertex $w \in V(T_B)$. Let $v',w'$ be the vertices in $T_Y$ corresponding to the strict transforms of $\Gamma_v$ and $\Gamma_w$ respectively. Since $v$ and $w$ are odd, Lemma~\ref{oddY} tells us that $v'$ and $w'$ are odd. Then $N(\phi_1(u'')) = \{ v',w' \}$. By Lemma~\ref{invimgphi}(a), the vertices $v',\phi_1(u''),w'$ of $T_Y$ each have exactly one preimage under under $\phi_1$. Let $v'',w'' \in V(T_X)$ such that $\phi_1(v'') = v'$ and $\phi_1(w'') = w'$. The unique point $\Gamma_{v'} \cap \Gamma_{\phi_1(u'')}$ has exactly one preimage under $\psi_1$ and therefore lies on both $\Gamma_{v''}$ and $\Gamma_{u''}$. Similarly, $\Gamma_{u''} \cap \Gamma_{w''}$ is nonempty. Lemma~\ref{adjacency}(a) now tells us that $N(u'') = \{v'',w'' \}$. This implies that $\# N(u'') = 2$ and $\# C(u'') = 1$. We also have $\phi(v'') = v$ and $\phi(w'') = w$, and both $v$ and $w$ are odd vertices. Since $\phi(v'')$ is odd and $\phi_1(v'') = v'$ is 
odd, Lemma~\ref{mult2} tells us that $m_{v''} = 2$. Similarly, we can show $m_{w''} = 2$.  
 
If $u'' \in S_2$, then Lemma~\ref{invimgphi}(c)(v) implies that $u' := \phi_1(u'')$ is an even leaf of $T_Y$. Lemma~\ref{invimgphi}(c)(vii) shows $u'$ has a parent. Let $v' = p_{\phi_1(u'')}$ and $v = \phi_2(v')$. Lemma~\ref{invimgphi}(c)(ii,vii) imply that $v'$ is an odd vertex corresponding to the strict transform of $\Gamma_v$ in $Y$, and $\# \phi_1^{-1}(v') = 1$. Let $v'' \in V(T_X)$ be such that $\phi_1(v'') = v'$. Then the unique point in $\Gamma_{v'} \cap \Gamma_{u'}$ has exactly one preimage under $\psi_1$ and this preimage is contained in $\Gamma_{v''} \cap \Gamma_{u''}$. Lemma~\ref{adjacency} now tells us that $\# N(u'') = 1$ and $\# C(u'') = 0$. Lemma~\ref{oddY} implies that $\phi(v'') = \phi_2(v') = v$ is odd. Since $\phi(v'') = v$ is odd and $\phi_1(v'') = v'$ is also odd, Lemma~\ref{mult2} implies that $m_{v''} = 2$.
 
The definitions of $T_1,T_2,S_1,S_2$ in Lemma~\ref{invimgphi}(c) show that the vertices in $S_1$ are exactly the ones corresponding to irreducible components of $X_s$ whose images under $\psi$ are contained in two odd components of $(\mathrm{Bl}_n(\P^1_R))_s$ and the vertices in $S_2$ are the ones corresponding to irreducible components of $X_s$ whose images under $\psi$ are contained in exactly one odd component. 

\item Suppose $u'' \in V(T_X)$, $w'' \in N(u'')$, $\phi(u'')$ is odd and $\phi(w'')$ is even. Then part (a) of this lemma tells us that $\psi(\Gamma_{u''})$ is not a point. If $S_0,S_1,S_2$ is the partition of $\phi^{-1}(\phi(u''))$ as in Lemma~\ref{invimgphi}(c), then Lemma~\ref{invimgphi}(c)(ii) implies that $u'' \in S_0$ since $\psi(\Gamma_{u''})$ is not a point. As $S_0$ has a unique vertex, and this vertex corresponds to the inverse image under $\psi_1$ of the strict transform of $\Gamma_{\phi(u'')}$, we are done.
\qedhere
\end{enumerate}
\end{proof}

\begin{lemma}\label{intersect}
 Let $v'',w'' \in V(T_X)$. Then $\Gamma_{v''}.\Gamma_{w''} \in \{ 0,1,2 \}$. Let $v = \phi(v''), w = \phi(w''), v' = \phi_1(v'')$ and $w' = \phi_1(w'')$. Then $\Gamma_{v''}.\Gamma_{w''} = 2$ if and only if
 \begin{enumerate}[(i)]
  \item both $v$ and $w$ are even,
  \item the vertices $v$ and $w$ are neighbours of each other, and,
  \item both $\Gamma_{v'}$ and $\Gamma_{w'}$ intersect the branch locus of $\psi_1$.
\end{enumerate}
\end{lemma}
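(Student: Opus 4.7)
The plan is to analyze $\Gamma_{v''}.\Gamma_{w''}$ by pushing down through the degree-two map $\psi_1 \colon X \to Y$ and exploiting the goodness of both models (Lemma~\ref{construction}(a)). Every intersection point of $\Gamma_{v''}$ and $\Gamma_{w''}$ in $X_s$ projects under $\psi_1$ to an intersection point of $\Gamma_{v'}$ and $\Gamma_{w'}$ in $Y_s$, where $v' = \phi_1(v'')$ and $w' = \phi_1(w'')$. If $v' = w'$ but $v'' \neq w''$, then Lemma~\ref{invimgphi}(a) forces $\Gamma_{v'}$ to miss the branch locus of $\psi_1$, so $\psi_1^{-1}(\Gamma_{v'}) = \Gamma_{v''} \sqcup \Gamma_{w''}$ is a disjoint union and $\Gamma_{v''}.\Gamma_{w''} = 0$. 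If instead $v' \neq w'$, the goodness of $Y$ forces $\Gamma_{v'} \cap \Gamma_{w'}$ to be either empty or a single point $p$, and in the latter case $|\psi_1^{-1}(p)| \leq 2$; combined with the transversality of intersections in $X_s$, this already gives the bound $\Gamma_{v''}.\Gamma_{w''} \leq 2$.

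For the characterization of equality, I would show that $\Gamma_{v''}.\Gamma_{w''} = 2$ is equivalent to (A) $\Gamma_{v'}$ and $\Gamma_{w'}$ meeting at a point $p$ with $|\psi_1^{-1}(p)| = 2$, together with (B) each of $\psi_1^{-1}(\Gamma_{v'})$ and $\psi_1^{-1}(\Gamma_{w'})$ being irreducible---this is what forces both preimages of $p$ to land simultaneously on $\Gamma_{v''}$ and on $\Gamma_{w''}$. Using the explicit description of the branch locus in Lemma~\ref{branchlocus}(a), (A) is equivalent to $\Gamma_{v'}$ and $\Gamma_{w'}$ both being even and no horizontal divisor $(x - b_i)$ passing through $p$; and (B) is precisely the statement that both $\Gamma_{v'}$ and $\Gamma_{w'}$ meet the branch locus, by Lemma~\ref{invimgphi}(a). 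The latter translates immediately into condition (iii).

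It remains to translate ``$v'$ and $w'$ are even, adjacent in $T_Y$, and strict transforms of components of $(\mathrm{Bl}_n(\P^1_R))_s$'' into conditions (i) and (ii) on $v = \phi_2(v')$ and $w = \phi_2(w')$. By Lemma~\ref{oddY}, every even vertex of $T_Y$ is either the strict transform of an even vertex of $T_B$ or an exceptional divisor of $\psi_2 \colon Y \to \mathrm{Bl}_n(\P^1_R)$; since the centers of $\psi_2$ all lie on odd components (either two odd components of the special fiber meeting, or an odd component meeting a horizontal divisor), every exceptional vertex is adjacent in $T_Y$ only to odd vertices. Hence $v'$ and $w'$ being even and adjacent forces them both to be strict transforms of even components, giving condition (i); and because $\psi_2$ never blows up a node joining two even components of $(\mathrm{Bl}_n(\P^1_R))_s$, adjacency in $T_Y$ corresponds to adjacency in $T_B$, giving condition (ii). The main obstacle is verifying that no horizontal divisor of $(f)$ passes through the node $p$ when $v, w$ are both even; I would handle this by observing that the iterated blow-up construction of $\mathrm{Bl}_n(\P^1_R)$ separates horizontal divisors so that each meets the special fiber at a single smooth point of some irreducible component, hence not at any node, and that $\psi_2$ only introduces new nodes involving at least one odd component, so horizontal divisors continue to avoid every node of $Y_s$ lying on two even components.
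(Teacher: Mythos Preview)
Your proposal is correct and follows essentially the same route as the paper: bound the intersection by pushing down through the degree-$2$ map $\psi_1$ and using that $T_Y$ is a tree, then characterize equality by the node $p=\Gamma_{v'}\cap\Gamma_{w'}$ having two preimages together with both $\psi_1^{-1}(\Gamma_{v'})$ and $\psi_1^{-1}(\Gamma_{w'})$ being irreducible, and finally translate these into conditions (i)--(iii) via Lemmas~\ref{branchlocus}, \ref{oddY}, and \ref{invimgphi}(a). Your treatment is in fact slightly more careful than the paper's in two places: you explicitly dispose of the case $v'=w'$ (which the paper's inequality chain silently excludes), and you supply the missing verification that no horizontal divisor of $(f)$ passes through a node of $Y_s$ joining two even components---the paper invokes Lemma~\ref{branchlocus}(a) to conclude $|\psi_1^{-1}(y)|=2$ without justifying this point.
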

\begin{proof}
Lemma~\ref{construction}(b) tells us that all intersections in $X_s$ are transverse, so the the number of points in the intersection of any two irreducible components in $X_s$ equals their intersection number.

Let $v'',w'' \in V(T_X)$. Then $\Gamma_{v''} \cap \Gamma_{w''} \subset \psi_1^{-1}(\Gamma_{v'} \cap \Gamma_{w'})$. Since $\psi_1$ is finite of degree $2$, any point of $Y$ has at most two preimages under $\psi_1$ and therefore $\# \psi_1^{-1}(\Gamma_{v'} \cap \Gamma_{w'}) \leq 2 \# \Gamma_{v'} \cap \Gamma_{w'}$. The set $\Gamma_{v'} \cap \Gamma_{w'}$ has at most one point since the dual graph $T_Y$ of $Y_s$ is a tree. This implies that $\# \Gamma_{v'} \cap \Gamma_{w'} \leq 1$. Putting these together, we get
\begin{align*}
 \Gamma_{v''}.\Gamma_{w''} &= \# \Gamma_{v''} \cap \Gamma_{w''} \\
 &\leq \#\psi_1^{-1}(\Gamma_{v'} \cap \Gamma_{w'}) \\
 &\leq 2 \ \# \Gamma_{v'} \cap \Gamma_{w'} \\
 &\leq 2.1 \\
 &= 2 .
\end{align*} 
It follows that $\Gamma_{v''}.\Gamma_{w''} \in \{0,1,2\}$.

Suppose that the three conditions in the lemma hold. Then, conditions $(i)$ and $(ii)$ imply that $\Gamma_v \cap \Gamma_w$ is nonempty and consists of a single point, say $b$. Then the strict transforms of $\Gamma_v$ and $\Gamma_w$ are $\Gamma_{v'}$ and $\Gamma_{w'}$ respectively and the map $\psi_2$ is an isomorphism above a neighbourhood of $\Gamma_{v} \cup \Gamma_{w}$. Let $y$ be the unique point in $\Gamma_{v'} \cap \Gamma_{w'}$. As $T_Y$ is a tree, the point $y$ does not lie on any other component of $Y_s$ except $\Gamma_{v'}$ and $\Gamma_{w'}$. Lemma~\ref{oddY} tells us that $v'$ and $w'$ are even. Lemma~\ref{branchlocus}(a) now tells us that the point $y$ has two preimages under $\psi_1$. Since $\Gamma_{v'}$ and $\Gamma_{w'}$ intersect the branch locus, their inverse images under $\psi_1$ are irreducible. This tells us $\Gamma_{v''} = \psi_1^{-1}(\Gamma_{v'})$ and $\Gamma_{w''} = \psi_1^{-1}(\Gamma_{w'})$. Then $\psi_1^{-1}(\Gamma_{v'} \cap \Gamma_{w'}) = \Gamma_{v''} \cap \Gamma_{w''}$. 
\begin{align*}
 \Gamma_{v''}.\Gamma_{w''} &= \# \Gamma_{v''} \cap \Gamma_{w''} \\
 &= \# \psi_1^{-1}(\Gamma_{v'} \cap \Gamma_{w'}) \\
 &= \# \psi_1^{-1}(y) \\
 &= 2.
\end{align*}
 
Now assume $\Gamma_{v''}.\Gamma_{w''} = 2$. Since the intersections in $X_s$ are transverse, the set $\Gamma_{v''} \cap \Gamma_{w''}$ has two points, say $x_1$ and $x_2$. Then, $\psi_1(x_1)$ and $\psi_1(x_2)$ must lie in $\Gamma_{v'} \cap \Gamma_{w'}$. Since any two components of $Y_s$ cannot intersect at more than one point, this tells us that $\psi_1(x_1) = \psi_1(x_2)$. Call this point of intersection $y$. Since $y$ has two preimages under $\psi_1$, it cannot lie on the branch locus of $\psi_1$. Lemma~\ref{branchlocus}(a) tells us that $v'$ and $w'$ must both be even. Since $\psi(\Gamma_{v''}) = \Gamma_v$, it follows that $\psi(\Gamma_{v''})$ is not a point. Similarly $\psi(\Gamma_{w''}) = \Gamma_w$ is not a point. Either $w'' \in C(v'')$ or $v'' \in C(w'')$, and Lemma~\ref{adjacency}(b) tells us that in both cases $v$ and $w$ are neighbours of each other. If $\Gamma_{v'}$ did not intersect the branch locus, then Lemma~\ref{invimgphi}(a) implies that $\psi_1^{-1}(\Gamma_{v'})$ must have two disjoint 
irreducible components, one of which is the $\Gamma_{v''}$ we started with. Let $\tilde{v}'' \in V(T_X)$ be the other. Then there is exactly one point of $\psi_1^{-1}(y)$ in each $\Gamma_{v''}$ and $\Gamma_{\tilde{v}''}$. This contradicts the fact that $\Gamma_{v''}$ has both points of $\psi_1^{-1}(y)$. A similar argument shows that $\Gamma_{w'}$ intersects the branch locus.
\end{proof}

We now make some definitions motivated by Sections $6$ and $7$. For $v'' \in V(T_X)$, define
\[ \delta(v'') =  (1-m_{v''})\ \chi(\Gamma_{v''}) + \sum_{w'' \in N({v''})} (m_{w''} - 1) \Gamma_{v''}.\Gamma_{w''} +  \sum_{w'' \in C({v''})} \Gamma_{v''}.\Gamma_{w''}   .\]
Let $v \in V(T_B)$. Define 
\[ D(v) = \sum_{v'' \in \phi^{-1}(v)} \delta(v''). \]

\subsection{Computation of $D(v)$ for an even vertex $v$}
Suppose $v \in V(T_B)$ is an even vertex. We define $D_0(v), D_1(v), D_2(v)$ as follows.
\begin{align*}
D_0(v) &= \sum_{v'' \in \phi^{-1}(v)} (1-m_{v''})\ \chi(\Gamma_{v''}) .\\
D_1(v) &= \sum_{v'' \in \phi^{-1}(v)} \ \ \sum_{w'' \in N({v''})} (m_{w''} - 1) \Gamma_{v''}.\Gamma_{w''} .\\
D_2(v) &= \sum_{v'' \in \phi^{-1}(v)} \ \ \sum_{w'' \in C({v''})} \Gamma_{v''}.\Gamma_{w''} 
\end{align*}
Then, $D(v) = D_0(v)+D_1(v)+D_2(v)$. We will now compute $D_i(v)$ for each $i \in \{0,1,2\}$ in terms of $l_v,r_v$ and $s_v$.

\begin{lemma}\label{evenfirst}
 Suppose $v \in V(T_B)$ is even. Then, $ D_0(v) = 0 $.
\end{lemma}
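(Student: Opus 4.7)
The plan is a one-line unpacking of the definitions combined with the multiplicity statement already established. Recall
\[ D_0(v) = \sum_{v'' \in \phi^{-1}(v)} (1-m_{v''})\,\chi(\Gamma_{v''}),\]
so it suffices to show that the coefficient $1 - m_{v''}$ vanishes for every $v'' \in \phi^{-1}(v)$.

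First I would apply Lemma~\ref{mult2}, which states that if $\phi(v'')$ is even then $m_{v''} = 1$. Since $v$ is assumed even, every $v'' \in \phi^{-1}(v)$ satisfies $\phi(v'') = v$ even, so $m_{v''} = 1$ for all such $v''$. Consequently $(1-m_{v''})\,\chi(\Gamma_{v''}) = 0$ term by term, and summing gives $D_0(v)=0$.

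There is no serious obstacle here; the content is entirely absorbed into Lemma~\ref{mult2}, whose proof in turn relied on Lemma~\ref{construction}(b) (the criterion that $m_{v''} = 2$ requires $\psi(\Gamma_{v''})$ to lie on an odd component of $(\mathrm{Bl}_n(\P^1_R))_s$). The only check to keep in mind is that $\phi^{-1}(v)$ is nonempty (which is immediate since $\phi_2$ is surjective, hence so is $\phi = \phi_1 \circ \phi_2$), but even that is irrelevant since an empty sum also equals $0$. The lemma will be the easy base case of a three-step computation of $D_0, D_1, D_2$, with the nontrivial content deferred to the analyses of $D_1$ and $D_2$ that the paper undertakes next.
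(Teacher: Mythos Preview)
Your proof is correct and matches the paper's own argument essentially verbatim: both invoke Lemma~\ref{mult2} to conclude $m_{v''}=1$ for every $v''\in\phi^{-1}(v)$ when $v$ is even, so each summand in $D_0(v)$ vanishes. (One cosmetic slip: the composition is $\phi=\phi_2\circ\phi_1$, not $\phi_1\circ\phi_2$, since $\phi_1\colon V(T_X)\to V(T_Y)$ and $\phi_2\colon V(T_Y)\to V(T_B)$; this doesn't affect the argument.)
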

\begin{proof}
Suppose $v$ is an even vertex. Lemma~\ref{mult2} implies that $m_{v''} = 1$ for every $v'' \in \phi^{-1}(v)$ and therefore,
\[ D_0(v) = \sum_{v'' \in \phi^{-1}(v)} (1-m_{v''})\ \chi(\Gamma_{v''}) = 0 . \qedhere \]
\end{proof}

\begin{lemma}\label{evenprelim}
Suppose $v \in V(T_B)$ is even. Let $v'' \in \phi^{-1}(v)$ and $w'' \in N(v'')$. Let $v' = \phi_1(v''), w' = \phi_1(w'')$ and $w = \phi(w'')$. 
\begin{enumerate}[\upshape (a)]
 \item The vertex $v'$ is even and $\phi_2^{-1}(v) = \{ v' \}$.
 \item The multiplicity $m_{w''} = 2$ if and only if $w$ is odd.
 \item If $v'' \in C(w'')$, then $v \in C(w)$. If $w'' \in C(v'')$, then $w \in C(v)$. In particular, $w \in N(v)$.
 \item If $r_v = 0$ and $l_v$ is even, then every neighbour of $v$ is even.
 \item The branch locus of $\psi_1$ intersects $\Gamma_{v'}$ at $l_v+(l_v \bmod 2)$ points, and all these intersections are transverse.
 \item If $l_v = 0$, then $\Gamma_{v'}$ does not intersect the branch locus of $\psi_1$ and $\# \phi^{-1}(v) = 2$. 
 \item If $l_v \neq 0$, then $\Gamma_{v'}$ intersects the branch locus of $\psi_1$, $\# \phi^{-1}(v) = 1$ and $\phi^{-1}(v) = \{ v'' \}$.
 \item If $w$ is odd, then $\Gamma_{v''}.\Gamma_{w''} = 1$.
 \item Suppose $u \in N(v)$ is odd. Then there exists a unique $u'' \in \phi^{-1}(u)$ such that $u'' \in N(v'')$. If $u \in C(v)$, then $u'' \in C(v'')$. If $v \in C(u)$, then $v'' \in C(u'')$.
 \item Suppose $l_v \neq 0$, $w'' \in C(v'')$ and $w$ is even. Then,  $\# \phi^{-1}(w) \in \{1,2\}$. If $\# \phi^{-1}(w) = 1$, then  $\Gamma_{v''}.\Gamma_{w''} = 2$. If $\# \phi^{-1}(w) = 2$, then $\Gamma_{v''}.\Gamma_{w''} = 1$.
 \item Suppose $l_v \neq 0$ and $u \in C(v)$ is even. If $u'' \in \phi^{-1}(u)$, then $u'' \in C(v'')$. 
 \item If $l_v = 0$, then $\Gamma_{v''}.\Gamma_{w''} = 1$.
 \item Suppose $l_v = 0$ and $u \in C(v)$ is even. If $\phi^{-1}(u) = \{ u'' \}$, then $u'' \in C(v'')$. If $\phi^{-1}(u) = \{ u_1'',u_2'' \}$, then, after possibly interchanging $u_1''$ and $u_2''$, we have that
 $u_1'' \in C(v'')$ and $\Gamma_{v''}.\Gamma_{u_2''} = 0$. 
\end{enumerate}
\end{lemma}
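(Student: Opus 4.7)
This lemma is a technical catalogue of thirteen assertions about an even vertex $v \in V(T_B)$, gathering information about the fibres $\phi_2^{-1}(v)$ and $\phi^{-1}(v)$, about the multiplicities and intersection numbers in $X_s$ for components mapping into $\Gamma_v$ or its neighbours, and about the parity of adjacent vertices. The plan is to verify parts (a)--(m) in the stated order, since later parts freely invoke earlier ones, and to use the structural results already proved: Lemmas~\ref{construction}, \ref{adjacency}, \ref{branchlocus}, \ref{oddeven}, \ref{lmod2}, \ref{oddY}, \ref{invimgphi}, \ref{mult2}, \ref{neigh} and~\ref{intersect}.

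Parts (a)--(e) are essentially bookkeeping. For (a), the exceptional curves of $\psi_2 \colon Y \to \mathrm{Bl}_n(\P^1_R)$ arise only at intersections of two odd components or of an odd component with a horizontal divisor; since $v$ is even, $\psi_2^{-1}(\Gamma_v)$ is just the strict transform $\Gamma_{v'}$, and Lemma~\ref{oddY} forces $v'$ even. For (b), Lemma~\ref{mult2} says $m_{w''} = 2$ iff $w = \phi(w'')$ is odd and $w' = \phi_1(w'')$ is not an even leaf of $T_Y$; the second condition is automatic, since by Lemma~\ref{invimgphi}(c)(v) the unique neighbour of an even leaf of $T_Y$ is odd, whereas the neighbour $v'$ is even by (a). For (c), Lemma~\ref{adjacency}(b) together with the hypothesis $v$ even rules out the ``$\psi$-image is a point'' alternative and yields $w \in N(v)$. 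For (d), Lemma~\ref{oddeven} says a child of the even vertex $v$ is odd iff its weight is odd, so $r_v = 0$ rules out odd children; and Lemma~\ref{lmod2} says $l_v$ is odd iff $v$ has an odd parent, so the hypothesis $l_v$ even rules out an odd parent. For (e), I count the points where the branch locus meets $\Gamma_{v'}$: the $r_v$ odd children contribute one transverse intersection each (the corresponding intersections in $\mathrm{Bl}_n(\P^1_R)$ survive to $Y$ because they involve the even $\Gamma_v$ and are not blown up), the $l'_v$ horizontal divisors through $\Gamma_v$ do likewise, and a possibly present odd parent contributes one more point; by Lemma~\ref{lmod2} this extra $0$ or $1$ is exactly $l_v \bmod 2$. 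Transversality follows from Lemma~\ref{branchlocus}(b) because $v'$ is even.

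Parts (f) and (g) are immediate from (a), (e) and Lemma~\ref{invimgphi}(a). Part (h) follows from Lemma~\ref{intersect}: with $v$ even and $w$ odd, its condition (i) fails, so $\Gamma_{v''}.\Gamma_{w''} \le 1$, and it is nonzero because $w'' \in N(v'')$. For (i), given an odd neighbour $u$ of $v$, apply Lemma~\ref{invimgphi}(c) to $u$: the set $S_0$ contains a unique $\tilde{u}$ corresponding to the (irreducible) preimage of the strict transform $\Gamma_{u'}$, and the unique transverse branch point of $\Gamma_{v'} \cap \Gamma_{u'}$ has a unique preimage under $\psi_1$ lying on both $\Gamma_{v''}$ and $\Gamma_{\tilde{u}}$, so $\tilde{u} \in N(v'')$. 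Candidates $u'' \in S_1 \cup S_2$ are excluded by Lemma~\ref{neigh}(a)(iii), which forces any neighbour of such a $u''$ to have odd image under $\phi$, contradicting $\phi(v'') = v$ even. The direction statements are then forced by (c), since the opposite direction would yield $v \in C(u)$ in the tree $T_B$.

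Parts (j)--(m) compute $\Gamma_{v''}.\Gamma_{w''}$ for even neighbours, split by whether $l_v \neq 0$ (so $\phi^{-1}(v) = \{v''\}$ and $\Gamma_{v''} = \psi_1^{-1}(\Gamma_{v'})$ is irreducible) or $l_v = 0$ (so $\phi^{-1}(v) = \{v_1'', v_2''\}$ and $\psi_1^{-1}(\Gamma_{v'}) = \Gamma_{v_1''} \sqcup \Gamma_{v_2''}$). In each case, let $y$ be the unique point of $\Gamma_{v'} \cap \Gamma_{w'}$ in $Y$; since $v$ and $w$ are both even, $y$ does not lie on the branch locus and therefore has two preimages under $\psi_1$. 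For (j), Lemma~\ref{intersect} gives intersection $2$ when $\Gamma_{w'}$ also meets the branch locus (so $\Gamma_{w''}$ is irreducible and contains both preimages of $y$) and $1$ when it does not (so $\psi_1^{-1}(\Gamma_{w'})$ splits into two components and each collects exactly one preimage of $y$). The direction $u'' \in C(v'')$ in (k) comes from the fact that directions in $T_X$ are transferred from $T_Y$ via $\phi_1$, and for even $u \in C(v)$ the strict transforms satisfy $u' \in C(v')$. For (l) and (m), the analogous partitioning shows that a single $v_j''$ meets at most one preimage of $y$, yielding intersection $1$ and, in the two-component case for the preimage of $\Gamma_{u'}$, a vanishing intersection with the ``other'' $u_k''$. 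The main obstacle throughout is disciplined bookkeeping: one must consistently track whether each of $\Gamma_{v'}, \Gamma_{u'}, \Gamma_{w'}$ meets the branch locus (to apply Lemma~\ref{invimgphi}(a) correctly) and then correctly assign the two preimages of $y$ to the correct components of $\psi_1^{-1}(\Gamma_{v'})$ and $\psi_1^{-1}(\Gamma_{w'})$, being careful with the root case and with the interaction between $\phi_1$ and edge directions.
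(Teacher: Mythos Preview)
Your proposal is correct and follows essentially the same route as the paper: a part-by-part verification of (a)--(m) using the same toolkit (Lemmas~\ref{construction} through~\ref{intersect}), with the same case splits on $l_v = 0$ versus $l_v \neq 0$ for (f)--(m). The only notable local differences are in (b), where you argue that $w'$ cannot be an even leaf via the parity of its neighbour $v'$ (the paper instead invokes Lemma~\ref{neigh}(b) to force $w'$ to be the strict transform of $\Gamma_w$, hence odd), and in (i), where your uniqueness argument excludes $S_1 \cup S_2$ via Lemma~\ref{neigh}(a)(iii) while the paper uses Lemma~\ref{neigh}(b) directly; both variants are valid and equally short.
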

\begin{proof} \hfill
\begin{enumerate}[\upshape (a)] 
  \item Since $\phi_2(v') = \phi(v'') = v$ and $v$ is even, Lemma~\ref{oddY} tells us that $v'$ is even. 
 
  \item First assume $w$ is odd. Since $v'$ is even, Lemma~\ref{neigh}(b) implies that $\Gamma_{w''}$ is the preimage under $\psi_1$ of the strict transform of $\Gamma_w$ in $Y$. In particular, Lemma~\ref{oddY} tells us that $w'$ is odd, and therefore not an even leaf. Lemma~\ref{mult2} applied to $w''$ then implies that $m_{w''} = 2$. 
  
  Conversely, assume $m_{w''} = 2$.  Lemma~\ref{mult2} applied to $w''$ implies that $w$ is odd. 
  
  \item If $w$ is odd, since $v$ is even, Lemma~\ref{neigh}(b) tells us that $\psi(\Gamma_{w''})$ is not a point. If $w$ is even, then $\psi(\Gamma_{w''})$ is not a point. Since $v$ is even,  $\psi(\Gamma_{v''})$ is not a point. Since $w'' \in N(v'')$, either $v'' \in C(w'')$ or $w'' \in C(v'')$. Since both $\psi(\Gamma_{v''})$ and $\psi(\Gamma_{w''})$ are not points, Lemma~\ref{adjacency}(b) tells us that in the first case $v \in C(w)$ and in the second case $w \in C(v)$. Both of these imply $w \in N(v)$. 
  
  \item Suppose $r_v = 0$ and $l_v$ is even. Since $v$ is even and $r_v = 0$, Lemma~\ref{oddeven} implies that every child of $v$ is even. Since $l_v$ is even, Lemma~\ref{lmod2} implies that $v$ does not have an odd parent. Therefore every neighbour of $v$ is even.
  
  \item Lemma~\ref{branchlocus}(a) and Lemma~\ref{oddY} tell us that $\Gamma_{v'}$ does not belong to the branch locus since $\phi_2(v') = v$, which is even. Lemma~\ref{branchlocus}(b) tells us that any component of the branch locus that intersects $\Gamma_{v'}$, intersects it transversally. 
  \begin{itemize}
   \item Lemma~\ref{branchlocus}(a) tells us that the components of the branch locus are the odd components of $Y_s$ and the irreducible horizontal divisors appearing in $(f)$ different from $\overline{\infty}$.
   \item Lemma~\ref{oddY} tells us that the odd components of $Y_s$ are the strict transforms of odd components of $(\mathrm{Bl}_n(\P^1_R))_s$. 
   \item Since $v$ is even, the map $\psi_2$ induces an isomorphism above a neighbourhood of $\Gamma_v$. \end{itemize}
   Therefore, the number of components of the branch locus intersecting $\Gamma_{v'}$ is the number of odd neighbours of $v$ added to the number of horizontal divisors different from $\overline{\infty}$ appearing in the divisor of $(f)$ that intersect $\Gamma_v$. The latter number is $l'_v$. Since $v$ is even, Lemma~\ref{oddeven} tells us that the number of odd children of $v$ is $r_v$. Lemma~\ref{lmod2} tells us that the number of odd parents of $v$ is $(l_v \bmod 2)$. Since $l'_v+r_v+(l_v \bmod 2) = l_v + (l_v \bmod 2)$, the branch locus intersects $\Gamma_{v'}$ at $l_v + (l_v \bmod 2)$ points.
  
  \item Suppose $l_v = 0$. Then $l_v + (l_v \bmod 2) = 0$. Part (e) tell us that $\Gamma_{v'}$ does not intersect the branch locus of $\psi_1$. Since $v$ is even, Lemma~\ref{invimgphi}(b) implies that $\# \phi^{-1}(v) = 2$.
%
  
  \item Suppose $l_v \neq 0$. Then $l_v + (l_v \bmod 2) \neq 0$. Part (e) tells us that $\Gamma_{v'}$ intersects the branch locus of $\psi_1$. Lemma~\ref{invimgphi}(b) then implies that $\# \phi^{-1}(v) = 1$. It follows that $\phi^{-1}(v) = \{ v'' \}$.
%
%
  
  \item Suppose $w$ is odd. Since $w$ is odd, Lemma~\ref{intersect} tells us that $\Gamma_{v''}.\Gamma_{w''} < 2$. On the other hand, since $w'' \in N(v'')$, it follows that $\Gamma_{v''}.\Gamma_{w''} \geq 1$. 
  
  \item Suppose $u \in N(v)$ is odd. Let $u'$ be the vertex corresponding to the strict transform of $\Gamma_u$ in $Y$. As $u \in N(v)$ and $\psi_2$ is an isomorphism above a neighbourhood of $\Gamma_{v}$, it follows that $u' \in N(v')$. In fact, this shows that if $u \in C(v)$, then $u' \in C(v')$; if $v \in C(u)$, then $v' \in C(u')$.
  
  Lemma~\ref{oddY} shows that $u'$ is odd. Lemma~\ref{branchlocus}(a) and Lemma~\ref{invimgphi}(a) applied to $u'$ show that there is a unique $u''$ in $V(T_X)$ such that $\phi_1(u'') = u'$. Part (a) tells us that $v'$ is even and $\phi_2^{-1}(v) = \{ v' \}$. Since $\Gamma_{v'}$ intersects $\Gamma_{u'}$ and $u'$ is odd, Lemma~\ref{branchlocus}(a) and Lemma~\ref{invimgphi}(a) applied to the even vertex $v'$ tell us that $\phi^{-1}(v) = \phi_1^{-1}(v') = \{ v'' \}$. Since $\psi_1^{-1}(\Gamma_{u'}) = \Gamma_{u''}$ and $\psi_1^{-1}(\Gamma_{v'}) = \Gamma_{v''}$, it follows that $\Gamma_{u''} \cap \Gamma_{v''} = \psi_1^{-1}(\Gamma_{u'} \cap \Gamma_{v'})$. Since $\psi_1$ is surjective and $\Gamma_{u'} \cap \Gamma_{v'}$ is nonempty, it follows that $u'' \in N(v'')$. We also have $\phi(u'') = \phi_2(u') = u$. This proves the existence of $u'' \in \phi^{-1}(u)$ such that $u'' \in N(v'')$.
  
  Suppose that we are given $u'' \in \phi^{-1}(u)$ such that $u'' \in N(v'')$. Since $v$ is even and $u$ is odd, Lemma~\ref{neigh}(b)  forces $u''$ to be the inverse image under $\psi_1$ of the strict transform of $\Gamma_u$ in $Y$. This proves uniqueness.
  
  Lemma~\ref{adjacency}(a) tells us that if $v'' \in C(u'')$, then $v' \in C(u')$. If $u \in C(v)$, then $u' \in C(v')$ and therefore $u'' \in C(v'')$. Similarly, one can show that if $v \in C(u)$, then $v'' \in C(u'')$.
  
  \item Part (g) tells us that $\Gamma_{v'}$ intersects the branch locus of $\psi_1$. Since $w$ is even, Lemma~\ref{invimgphi}(b) implies that $\# \phi^{-1}(w) \in \{1,2\}$. Since $v$ and $w$ are even,  $\# \psi^{-1}(\Gamma_v \cap \Gamma_{w}) = 2$. Since $w'' \in C(v'')$, Lemma~\ref{intersect} tells us that $1 \leq \Gamma_{v''}.\Gamma_{w''} \leq 2$. We have that $v$ and $w$ are even, $w \in C(v)$ (by (c)) and that $\Gamma_{v'}$ intersects the branch locus; thus, Lemma~\ref{intersect} implies that $\Gamma_{v''}.\Gamma_{w''} = 2$ if $\Gamma_{w'}$ intersects the branch locus, and $\Gamma_{v''}.\Gamma_{w''} = 1$ if it does not. Lemma~\ref{invimgphi}(b) applied to $w$ tells us that this can be restated as follows: If $\# \phi^{-1}(w) = 1$, then  $\Gamma_{v''}.\Gamma_{w''} = 2$; if $\# \phi^{-1}(w) = 2$, then $\Gamma_{v''}.\Gamma_{w''} = 1$.
  
   \item Let $u' \in V(T_Y)$ be the vertex corresponding to the strict transform of $\Gamma_u$ in $Y$. Let $u'' \in \phi^{-1}(u)$.
   \begin{itemize}
    \item Part (g) tells us that $\Gamma_{v'}$ intersects the branch locus of $\psi_1$ and $\phi^{-1}(v) = \{ v'' \}$. Therefore $\psi_1^{-1}(\Gamma_{v'}) = \Gamma_{v''}$.
    \item Since $\psi_2$ is an isomorphism above a neighbourhood of $\Gamma_v$, we have that $u' \in C(v')$. In particular, $\Gamma_{u'} \cap \Gamma_{v'} \neq \emptyset$.
    \item The map $\psi_1$ restricts to a surjection $\Gamma_{u''} \rightarrow \Gamma_{u'}$.
   \end{itemize}
    These three facts together imply that $\Gamma_{u''} \cap \Gamma_{v''}$ is not empty. In particular,  $u'' \in N(v'')$. If $v'' \in C(u'')$, then Lemma~\ref{adjacency}(a) would imply $v' \in C(u')$.  Since $u' \in C(v')$, Lemma~\ref{adjacency}(a) implies that $u'' \in C(v'')$.  
  
  \item Suppose $l_v = 0$. Part (f) tells us that $\Gamma_{v'}$ does not intersect the branch locus. Lemma~\ref{intersect} applied to the pair $v'',w''$ tells us $\Gamma_{v''}.\Gamma_{w''} < 2$. On the other hand, since $w'' \in N(v'')$, we have that $\Gamma_{v''}.\Gamma_{w''} \geq 1$. Therefore,  $\Gamma_{v''}.\Gamma_{w''} = 1$.
  
  \item Let $u' \in V(T_Y)$ be the vertex corresponding to the strict transform of $\Gamma_u$ in $Y$. Since $\psi_2$ is an isomorphism above a neighbourhood of $\Gamma_v$, we get that $u' \in C(v')$.  
  
  Suppose $\phi^{-1}(u) = \{ u'' \}$. Since $\psi_1^{-1}(\Gamma_{u'}) = \Gamma_{u''}$ and $\psi_1$ restricts to a surjection $\Gamma_{v''} \rightarrow \Gamma_{v'}$, an appropriate modification of the argument in part(j) tells us that $u'' \in C(v'')$.
  
  Suppose $\phi^{-1}(u) = \{ u_1'',u_2'' \}$. Then, Lemma~\ref{invimgphi}(a) implies that $\Gamma_{u'}$ does not intersect the branch locus. Part (f) implies that $\Gamma_{v'}$ does not intersect the branch locus. This implies that the map $\psi_1$ is \'{e}tale above a neighbourhood of $\Gamma_{v'} \cup \Gamma_{u'}$. Since $\P^1_k$ has no connected \'{e}tale covers, this implies that $\psi_1^{-1}(\Gamma_{v'} \cup \Gamma_{u'})$ has two connected components, each of which maps isomorphically on to $\Gamma_{v'} \cup \Gamma_{u'}$ via $\psi_1$. This finishes the proof.  \qedhere
 \end{enumerate}
\end{proof}

\begin{lemma}\label{evensecond}
 Suppose $v \in V(T_B)$ is even. Then, $ D_1(v) = (l_v \bmod 2)+r_v$. (Here and subsequently $l_v \bmod 2$ is an integer in $\{0,1\}$. It is $0$ if $l_v$ is even and $1$ if $l_v$ is odd.)
\end{lemma}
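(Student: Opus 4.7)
The plan is to simplify $D_1(v)$ by exploiting the fact that the multiplicities take values only in $\{1,2\}$ (Lemma~\ref{construction}(b)), so the factor $(m_{w''}-1)$ is the indicator of $m_{w''}=2$. By Lemma~\ref{evenprelim}(b), those $w''\in N(v'')$ with $m_{w''}=2$ are exactly the ones whose image $\phi(w'')\in V(T_B)$ is odd. Moreover, for such $w''$, Lemma~\ref{evenprelim}(h) gives $\Gamma_{v''}.\Gamma_{w''}=1$. Thus
\[ D_1(v) = \sum_{v''\in\phi^{-1}(v)} \#\bigl\{w''\in N(v'') : \phi(w'') \text{ is odd}\bigr\}, \]
and the task is reduced to counting pairs $(v'',w'')$ with $v''\in\phi^{-1}(v)$, $w''\in N(v'')$, and $\phi(w'')$ odd.

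I would then split into two cases according to whether $l_v=0$ or $l_v\neq 0$. If $l_v=0$, then by definition $l'_v=r_v=0$, so by Lemma~\ref{oddeven} $v$ has no odd children, and by Lemma~\ref{lmod2} $v$ has no odd parent; so $v$ has no odd neighbours at all. Since $\phi(w'')\in N(v)$ by Lemma~\ref{evenprelim}(c), the count is $0$, matching $(l_v\bmod 2)+r_v=0$.

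Now suppose $l_v\neq 0$. By Lemma~\ref{evenprelim}(g), $\phi^{-1}(v)=\{v''\}$ is a singleton, so the outer sum collapses. I would then invoke Lemma~\ref{evenprelim}(i), which asserts that for each odd $u\in N(v)$ there is a unique $u''\in\phi^{-1}(u)$ lying in $N(v'')$. Combined with Lemma~\ref{evenprelim}(c), which guarantees that any $w''\in N(v'')$ with $\phi(w'')$ odd has $\phi(w'')\in N(v)$, this produces a bijection between $\{w''\in N(v''): \phi(w'')\text{ odd}\}$ and the odd neighbours of $v$.

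Finally, I would count the odd neighbours of $v$. Since $v$ is even, Lemma~\ref{oddeven} identifies its odd children with those of odd weight, numbering $r_v$; Lemma~\ref{lmod2} says $v$ has an odd parent precisely when $l_v$ is odd, contributing $(l_v\bmod 2)$. Summing these yields $D_1(v)=r_v+(l_v\bmod 2)$, as desired. The main (only) subtlety is keeping the bookkeeping straight between the two cases $l_v=0$ and $l_v\neq 0$ — once one invokes the right parts of Lemma~\ref{evenprelim}, no further geometric input is required, as all the intersection-theoretic and structural work has already been done.
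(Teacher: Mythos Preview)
Your proposal is correct and follows essentially the same approach as the paper's proof: both split into the cases $l_v=0$ and $l_v\neq 0$, use Lemma~\ref{evenprelim}(b,h) to reduce $D_1(v)$ to a count of neighbours $w''$ with $\phi(w'')$ odd, and then use Lemma~\ref{evenprelim}(c,g,i) together with Lemmas~\ref{oddeven} and~\ref{lmod2} to identify this count with the number of odd neighbours of $v$. Your presentation is slightly more streamlined (you perform the reduction to a counting problem before splitting into cases), but the content is identical.
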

\begin{proof}
 Suppose $v \in V(T_B)$ is even. 
 We break up the computation of $D_1(v)$ into two cases:
\begin{flushleft} $\textbf{\textup{Case 1}} \colon \mathbf{l_v = 0}$ \end{flushleft}
 In this case, 
 \begin{align*} 
D_1(v) &= \sum_{v'' \in \phi^{-1}(v)}  \sum_{w'' \in N(v'')} (m_{w''} - 1) \Gamma_{v''}.\Gamma_{w''}  \\
&= \sum_{v'' \in \phi^{-1}(v)} {\sum_{\substack{w'' \in N(v'') \\ \phi(w'') \ \mathrm{even}}}} (m_{w''} - 1) \Gamma_{v''}.\Gamma_{w''}  \ \ \ \ (\mathrm{by \ Lemma~\ref{evenprelim}(d) \ since \ r_v =l_v = 0 }) \\
&= \sum_{v'' \in \phi^{-1}(v)} {\sum_{\substack{w'' \in N(v'') \\ \phi(w'') \ \mathrm{even}}}} (1 - 1) \Gamma_{v''}.\Gamma_{w''} \ \ \ \ (\mathrm{by \ Lemma~\ref{evenprelim}(b)}) \\
&=  0 \\
&= (l_v \bmod 2)+r_v\ \ \ \ (\textup{since $l'_v$ and $r_v$ are nonnegative, $r_v = 0$}) . 
\end{align*}

 \begin{flushleft} $\textbf{\textup{Case 2}} \colon \mathbf{l_v \neq 0}$ \end{flushleft}
 In this case, Lemma~\ref{evenprelim}(g) implies that $\# \phi^{-1}(v) = 1$. Let $\phi^{-1}(v) = \{ v'' \}$. Then,
\begin{align*}
 D_1(v) &= \sum_{\tilde{v}'' \in \phi^{-1}(v)} \sum_{w'' \in N(\tilde{v}'')} (m_{w''} - 1) \Gamma_{\tilde{v}''}.\Gamma_{w''} \\
 &= \sum_{w'' \in N(v'')} (m_{w''} - 1) \Gamma_{v''}.\Gamma_{w''} \\
 &= {\sum_{\substack{w'' \in N(v'') \\ \ \phi(w'') \ \mathrm{odd}}} (m_{w''} - 1) \Gamma_{v''}.\Gamma_{w''}} + {\sum_{\substack{w'' \in N(v'') \\ \phi(w'') \ \mathrm{even}}} (m_{w''} - 1) \Gamma_{v''}.\Gamma_{w''}} \\
 &= {\sum_{\substack{w'' \in N(v'') \\ \phi(w'') \ \mathrm{odd}}} (2 - 1) \Gamma_{v''}.\Gamma_{w''}} + {\sum_{\substack{w'' \in N(v'') \\ \phi(w'') \ \mathrm{even}}} (1 - 1) \Gamma_{v''}.\Gamma_{w''}} \ \ \ \ (\mathrm{by \ Lemma~\ref{evenprelim}(b)}) \\
 &= {\sum_{\substack{w'' \in N(v'') \\ \phi(w'') \ \mathrm{odd}}} 1} \ \ \ \ (\mathrm{by \ Lemma~\ref{evenprelim}(h)}) \\
 &= {\sum_{\substack{w \in N(v) \\ w \ \mathrm{odd}}} \sum_{\substack{w'' \in N(v'') \\ \phi(w'') = w}} 1} \ \ \ \ (\mathrm{by \ Lemma~\ref{evenprelim}(c)}) \\
 &= {\sum_{\substack{w \in N(v) \\ w \ \mathrm{odd}}} 1} \ \ \ \ (\mathrm{by \ Lemma~\ref{evenprelim}(i) \ with \ u=w})\\
 &= \begin{cases} 
 	\displaystyle{1 + {\sum_{\substack{w \in C(v) \\ w \ \mathrm{odd}}} 1}} \ \ \ \ &\mathrm{if \ v \ has \ an \ odd \ parent} \\ 
 	\displaystyle{{\sum_{\substack{w \in C(v) \\ w \ \mathrm{odd}}} 1}} \ \ \ \ &\mathrm{otherwise} 
     \end{cases}  \\
  &= (l_v \bmod 2) + r_v \ \ \ \ (\mathrm{by \ Lemma~\ref{lmod2} \ and \ Lemma~\ref{oddeven} \ since \ v \ is \ even}). \qedhere
\end{align*}  
\end{proof}

\begin{lemma}\label{eventhird}
 Suppose $v \in V(T_B)$ is even. Then, $D_2(v) = r_v+2s_v$.
\end{lemma}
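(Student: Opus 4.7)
The plan is to split on whether $l_v = 0$ or $l_v \neq 0$, which by Lemma~\ref{evenprelim}(f) and (g) determines whether $\#\phi^{-1}(v)$ equals $2$ or $1$. In either case Lemma~\ref{evenprelim}(c) implies $\phi(C(v'')) \subseteq C(v)$ for every $v'' \in \phi^{-1}(v)$, so I reorganize
\[ D_2(v) \ = \ \sum_{u \in C(v)} \ \sum_{v'' \in \phi^{-1}(v)} \ \sum_{\substack{u'' \in C(v'') \\ \phi(u'') = u}} \Gamma_{v''}.\Gamma_{u''} \]
and compute the inner contribution of each $u \in C(v)$. Since $v$ is even, Lemma~\ref{oddeven} identifies odd (respectively even) children of $v$ with children of odd (respectively even) weight, so $C(v)$ splits into $r_v$ odd and $s_v$ even children. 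I aim to show each odd child contributes $1$ and each even child contributes $2$.

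When $l_v \neq 0$, Lemma~\ref{evenprelim}(g) reduces $\phi^{-1}(v)$ to a singleton $\{v''\}$. For each odd $u \in C(v)$, Lemma~\ref{evenprelim}(i) produces a unique $u'' \in \phi^{-1}(u) \cap C(v'')$, and Lemma~\ref{evenprelim}(h) gives $\Gamma_{v''}.\Gamma_{u''} = 1$, for a total of $r_v$. For each even $u \in C(v)$, Lemma~\ref{evenprelim}(k) ensures $\phi^{-1}(u) \subseteq C(v'')$, and Lemma~\ref{evenprelim}(j) yields a contribution of $2$ in both sub-cases $\#\phi^{-1}(u) \in \{1,2\}$ (one child with intersection $2$, or two children with intersections $1$ each). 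Summing gives $D_2(v) = r_v + 2s_v$.

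When $l_v = 0$, the decomposition $l_v = l'_v + r_v$ forces $l'_v = r_v = 0$, so all $s_v$ children of $v$ are even, and Lemma~\ref{evenprelim}(f) provides $\phi^{-1}(v) = \{v_1'', v_2''\}$ with $\Gamma_{v'}$ disjoint from the branch locus. For each even $u \in C(v)$, Lemma~\ref{evenprelim}(m) (together with the \'etale double cover structure of $\psi_1$ over $\Gamma_{v'} \cup \Gamma_{u'}$ that fixes the relabeling consistently) produces two sub-cases: either $\phi^{-1}(u) = \{u''\}$ is a shared child of both $v_1''$ and $v_2''$, or $\phi^{-1}(u) = \{u_1'', u_2''\}$ with $u_i'' \in C(v_i'')$ and $\Gamma_{v_i''}.\Gamma_{u_{3-i}''} = 0$. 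Every nonzero intersection that appears is at least $1$ by adjacency and at most $1$ by Lemma~\ref{intersect} (criterion (iii) fails because $\Gamma_{v'}$ avoids the branch locus), so each even $u$ contributes exactly $2$, yielding $D_2(v) = 2s_v = r_v + 2s_v$. The main obstacle is this $l_v = 0$ case: recognizing that a lone lift $u''$ functions as a shared child of both $v_1''$ and $v_2''$ in $T_X$, and carefully ruling out intersection number $2$ via Lemma~\ref{intersect}.
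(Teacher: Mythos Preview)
Your proposal is correct and follows essentially the same approach as the paper: the same case split on $l_v = 0$ versus $l_v \neq 0$, the same reorganization of $D_2(v)$ as a sum over $u \in C(v)$ via Lemma~\ref{evenprelim}(c), and the same supporting lemmas (parts (f), (g), (h), (i), (j), (k), (m) of Lemma~\ref{evenprelim} together with Lemma~\ref{oddeven}). The only cosmetic differences are that the paper cites Lemma~\ref{evenprelim}(l) directly in the $l_v = 0$ case where you invoke Lemma~\ref{intersect} to reach the same bound, and your remark about ``relabeling consistency'' in the two-preimage sub-case is harmless but not actually needed: applying Lemma~\ref{evenprelim}(m) separately to each $v_i''$ already shows that each contributes exactly one intersection of value~$1$, so the total is~$2$ regardless of which $u_j''$ pairs with which $v_i''$.
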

\begin{proof}
We break up the computation of $D_2(v)$ into two cases: 

  \begin{flushleft}$\textbf{\textup{Case 1}} \colon \mathbf{l_v = 0}$ \end{flushleft}  
  In this case, Lemma~\ref{evenprelim}(f) tells us that $\# \phi^{-1}(v) = 2$. Since $l'_v$ and $r_v$ are nonnegative,  $r_v = 0$. Then,
    
  \begin{align*}
  D_2(v) &= \sum_{v'' \in \phi^{-1}(v)} \sum_{w'' \in C(v'')} \Gamma_{v''}.\Gamma_{w''} \\
  &= {\sum_{\substack{w \in C(v) \\ w \ \mathrm{even}}}}  \sum_{v'' \in \phi^{-1}(v)} {\sum_{\substack{w'' \in C(v'') \\ \phi(w'') = w}} \Gamma_{v''}.\Gamma_{w''}} \\
  & \pushright{(\textup{since Lemma~\ref{evenprelim}(c,d) imply that $\phi(w'') \in C(v)$ and is even})} \\
  &= {\sum_{\substack{w \in C(v) \\ w \ \mathrm{even}}}} 2 \\
  & \pushright{(\textup{by Lemma~\ref{evenprelim}(l,m) since Lemma~\ref{invimgphi}(b) implies that}\ \# \phi^{-1}(w) \in \{1,2\} )} \\
  &= r_v + 2s_v \ \ \ \ (\mathrm{by \ Lemma~\ref{oddeven}\ since \ v \ is \ even \ and \ r_v = 0}).
  \end{align*}  

  \begin{flushleft}$\textbf{\textup{Case 2}} \colon \mathbf{l_v \neq 0}$ \end{flushleft}
  In this case, Lemma~\ref{evenprelim}(g) implies that $\# \phi^{-1}(v) = 1$. Let $\{ v'' \} = \phi^{-1}(v)$. Then,
    
  \begin{align*}
 D_2(v) &= \sum_{w'' \in C(v'')} \Gamma_{v''}.\Gamma_{w''} \\
 &= {\sum_{\substack{w'' \in C(v'') \\ \phi(w'') \ \mathrm{odd}}} 1} + {\sum_{\substack{w'' \in C(v'') \\ \phi(w'') \ \mathrm{even}}} \Gamma_{v''}.\Gamma_{w''}} \ \ \ \ (\mathrm{by \ Lemma~\ref{evenprelim}(h)}) \\
 &= {\sum_{\substack{w \in C(v) \\ w \ \mathrm{odd}}} \sum_{\substack{w'' \in C(v'') \\ \phi(w'') = w}} 1} + {\sum_{\substack{w \in C(v) \\ w \ \mathrm{even}}} \sum_{\substack{w'' \in C(v'') \\ \phi(w'') = w}} \Gamma_{v''}.\Gamma_{w''}} \ \ \ \ (\mathrm{by \ Lemma~\ref{evenprelim}(c)}) \\
 &= {\sum_{\substack{w \in C(v) \\ w \ \mathrm{odd}}} 1} + {\sum_{\substack{w \in C(v) \\ w \ \mathrm{even}}} 2} \\
 & \pushright{(\mathrm{by \ Lemma~\ref{evenprelim}(i),(k) \ with }\ u=w \ \mathrm{and \ Lemma~\ref{evenprelim}(j)})}\\
 &= r_v + 2s_v \ \ \ \ (\mathrm{by \ Lemma~\ref{oddeven} \ since \ v \ is \ even}). \qedhere
\end{align*}    
\end{proof}

\begin{lemma}\label{eventotal}
 Suppose $v \in V(T_B)$ is even. Then,
 \[ D(v) = (l_v \bmod 2)+2r_v+2s_v .\]
\end{lemma}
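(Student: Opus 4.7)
The plan is immediate: this lemma is simply the assembly of the three preceding local computations. Recall we defined
\[ D(v) = \sum_{v'' \in \phi^{-1}(v)} \delta(v'') = D_0(v) + D_1(v) + D_2(v), \]
where $D_0, D_1, D_2$ isolate the Euler characteristic term, the neighbor-correction term, and the child-intersection term in the local Deligne contribution. Since $v$ is assumed even, all three pieces have already been computed in the previous lemmas, so there is no obstacle beyond adding.

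First I would invoke Lemma~\ref{evenfirst} to obtain $D_0(v) = 0$; this used only Lemma~\ref{mult2}, which forces $m_{v''} = 1$ for every $v'' \in \phi^{-1}(v)$ when $v$ is even, killing the factor $(1-m_{v''})$. Next I would invoke Lemma~\ref{evensecond} to obtain $D_1(v) = (l_v \bmod 2) + r_v$; this is where the parity contribution $l_v \bmod 2$ enters, tracked via Lemma~\ref{lmod2} (encoding whether $v$ has an odd parent) and via the odd children counted by $r_v$. Finally I would invoke Lemma~\ref{eventhird} to obtain $D_2(v) = r_v + 2 s_v$, which is the contribution of children of $v$, where each even child contributes $2$ (either as the two components in a disjoint preimage when $l_v = 0$ or as a single component intersecting $\Gamma_{v''}$ with multiplicity $2$ when $l_v \neq 0$), and each odd child contributes $1$.

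Adding the three gives
\[ D(v) = 0 + \bigl( (l_v \bmod 2) + r_v \bigr) + \bigl( r_v + 2 s_v \bigr) = (l_v \bmod 2) + 2 r_v + 2 s_v, \]
which is the claimed formula.

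There is essentially no hard step here; the substantive work was already carried out in Lemmas~\ref{evenfirst}, \ref{evensecond}, and \ref{eventhird}, which in turn rested on the combinatorial bookkeeping in Lemma~\ref{evenprelim} (partition of $\phi^{-1}(v)$, transverse intersections with the branch locus, the dichotomy between the cases $l_v = 0$ and $l_v \neq 0$). The present lemma is simply the statement that the three local contributions assemble into a single clean formula in $l_v, r_v, s_v$ which can later be compared term-by-term with the local discriminant contribution $d(v) = \sum_{w \in C(v)} \wt_w(\wt_w - 1)$ from Lemma~\ref{formulaford}.
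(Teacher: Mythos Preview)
Your proposal is correct and follows exactly the paper's approach: the paper's proof is the single line ``Combine Lemmas~\ref{evenfirst}, \ref{evensecond} and \ref{eventhird},'' which is precisely the addition $D(v)=D_0(v)+D_1(v)+D_2(v)=0+\bigl((l_v\bmod 2)+r_v\bigr)+(r_v+2s_v)$ that you spell out.
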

\begin{proof}
Combine Lemmas ~\ref{evenfirst}, ~\ref{evensecond} and ~\ref{eventhird}.
\end{proof}

\subsection{Computation of $D(v)$ for an odd vertex $v$}
Suppose $v \in V(T_B)$ is odd. Let $S_0(v),S_1(v),S_2(v)$ denote the partition of $\phi^{-1}(v)$ constructed in Lemma~\ref{invimgphi}(c). 

\begin{lemma}\label{oddprelim}
Suppose $v \in V(T_B)$ is odd. Let $v'' \in S_0(v), w'' \in N(v''),v' = \phi_1(v'')$ and $w = \phi(w'')$. 
 \begin{enumerate}[\upshape (a)]
  \item The component $\Gamma_{v'}$ is the strict transform of $\Gamma_v$ in $Y$ and $v'$ is odd. The image $\psi(\Gamma_{v''})$ is not a point.
  \item We have that
  \[ \{ w'' \in C(v'') \ | \ m_{w''} = 2 \} = S_1(v) .\] 
  We also have that $\# S_1(v) = s_v$.
  \item If $v'' \in C(w'')$ and $m_{w''} = 2$, then $w = p_v$ and $w$ is odd.
  \item If $p_v$ is odd, there exists a unique $u'' \in \phi^{-1}(p_v)$ such that $v'' \in C(u'')$.
  \item The map $\phi$ induces a bijection between the sets $\{ w'' \in C(v'') \setminus S_2(v)\ |\ m_{w''} = 1 \}$ and $\{ w \in C(v)\ | \ w \ \textup{is even} \}$.
  \item We have that $\Gamma_{v''}.\Gamma_{w''} = 1$.
 \end{enumerate}
\end{lemma}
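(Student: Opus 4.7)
The plan is to exploit the explicit description of the partition $\{S_0(v), S_1(v), S_2(v)\}$ of $\phi^{-1}(v)$ from Lemma~\ref{invimgphi}(c), together with Lemmas~\ref{adjacency},~\ref{mult2}, and~\ref{intersect}. Part (a) is essentially the definition of $S_0(v)$: by Lemma~\ref{invimgphi}(c)(ii), $S_0(v) = \phi_1^{-1}(\{v'\})$ where $v'$ is the strict transform of $\Gamma_v$ in $Y$, so Lemma~\ref{oddY} gives that $v'$ is odd, and $\psi(\Gamma_{v''}) = \psi_2(\Gamma_{v'}) = \Gamma_v$ is not a point. Part (f) is immediate from Lemma~\ref{intersect}: since $\phi(v'') = v$ is odd, the intersection cannot equal $2$, and $w'' \in N(v'')$ forces it to be at least $1$.

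For part (b), Lemma~\ref{invimgphi}(c)(vii) says $\phi^{-1}(v)$ is a tree rooted at $v''$, so every element of $S_1(v) \sqcup S_2(v) = \phi^{-1}(v) \setminus \{v''\}$ is a child of $v''$ in $T_X$; combined with Lemma~\ref{invimgphi}(c)(iii,iv) this gives the inclusion $S_1(v) \subseteq \{w'' \in C(v'') : m_{w''} = 2\}$. For the reverse inclusion, suppose $w'' \in C(v'')$ with $m_{w''} = 2$. Lemma~\ref{adjacency}(b) gives either $\phi(w'') = v$ (whence $w'' \in S_1(v)$, since $S_2(v)$ has multiplicity $1$) or $\phi(w'') \in C(v)$. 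In the latter case $\psi(\Gamma_{w''})$ is not a point, so applying part (a) at $\phi(w'')$ shows that $\phi_1(w'')$ is the strict transform of $\Gamma_{\phi(w'')}$, which Lemma~\ref{oddY} says is odd in $T_Y$ (since Lemma~\ref{mult2} forces $\phi(w'')$ odd). But $\phi_1(v'') = v'$ is also odd, and Lemma~\ref{adjacency}(a) gives that $v'$ and $\phi_1(w'')$ are adjacent in $T_Y$, contradicting the fact that the construction of $Y$ inserts an even exceptional divisor on every $T_B$-edge between two odd vertices. The count $\#S_1(v) = s_v$ is then Lemma~\ref{invimgphi}(c)(iii). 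Part (c) follows by running Lemma~\ref{adjacency}(b) with the roles of $v''$ and $w''$ swapped: since $\psi(\Gamma_{v''})$ is not a point, the first clause cannot apply, so $v = \phi(v'') \in C(\phi(w'')) = C(w)$, giving $w = p_v$; Lemma~\ref{mult2} and $m_{w''} = 2$ force $w$ odd.

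For part (d), with $p = p_v$ odd, the construction of $Y$ inserts a unique exceptional divisor $e'$ on the edge between the strict transforms $p'$ and $v'$, with $\phi_2(e') = p$ (so $e' \in T_1(p)$) and $v' \in C(e')$. Since both $\Gamma_{p'}$ and $\Gamma_{v'}$ are odd and hence lie in the branch locus (Lemma~\ref{branchlocus}(a)), $\Gamma_{e'}$ meets the branch locus, so Lemma~\ref{invimgphi}(a) yields a unique $u'' \in \phi_1^{-1}(e') \subseteq \phi^{-1}(p)$. Chasing $\psi_1^{-1}$ of the single point $\Gamma_{e'} \cap \Gamma_{v'}$ through the ramified component $\Gamma_{v'}$ shows $\Gamma_{u''} \cap \Gamma_{v''} \neq \emptyset$, and Lemma~\ref{adjacency}(a) transfers the direction $v' \in C(e')$ to $v'' \in C(u'')$. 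For part (e), the argument of (b) already shows that any $w'' \in C(v'')$ with $m_{w''} = 1$ and $w'' \notin S_2(v)$ satisfies $\phi(w'') \in C(v)$ with $\phi(w'')$ even. Conversely, for each even $u \in C(v)$, no exceptional divisor is inserted on the $T_B$-edge from $v$ to $u$, so the strict transform $u'$ satisfies $u' \in C(v')$; the single point $y = \Gamma_{v'} \cap \Gamma_{u'}$ has a unique preimage $x$ under $\psi_1$ (since $\Gamma_{v'}$ is in the branch locus), and $x$ lies on $\Gamma_{v''}$ and on exactly one component above $\Gamma_{u'}$, producing a unique $w'' \in \phi^{-1}(u) \cap C(v'')$. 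Since $\phi(w'') = u \neq v$, we have $w'' \notin S_2(v)$, and Lemma~\ref{mult2} gives $m_{w''} = 1$.

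The main obstacle is the three-way case analysis underlying parts (b) and (e), specifically ruling out the possibility that some $w'' \in C(v'')$ satisfies $\phi(w'') \neq v$ with $m_{w''} = 2$. The key geometric input is that strict transforms of two adjacent odd vertices of $T_B$ are never adjacent in $T_Y$, which is a direct consequence of the blow-up construction of $Y$ but must be marshalled carefully through Lemmas~\ref{adjacency}(a),~\ref{mult2}, and~\ref{oddY}. Once this is isolated, the remaining work is a bookkeeping exercise in the tree structure provided by Lemma~\ref{invimgphi}.
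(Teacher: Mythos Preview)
Your proof is correct and follows essentially the same route as the paper: both rely on the partition $\{S_0,S_1,S_2\}$ from Lemma~\ref{invimgphi}(c), the adjacency Lemma~\ref{adjacency}, the multiplicity criterion Lemma~\ref{mult2}, and the intersection bound Lemma~\ref{intersect}. The only organizational difference is that in (b) and (e) you set up the dichotomy via Lemma~\ref{adjacency}(b) and then rule out the bad branch by the ``two odd $T_Y$-vertices cannot be adjacent'' obstruction, whereas the paper instead works through the $\{T_0,T_1,T_2\}$ partition of $\phi_2^{-1}(w)$ to force $w=v$ directly; both arguments isolate the same geometric fact.

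One small point to tighten in (d): you establish uniqueness of $u''$ only inside $\phi_1^{-1}(e')$, but the claim requires uniqueness inside $\phi^{-1}(p_v)$. You need one more line: if $u'' \in \phi^{-1}(p_v)$ satisfies $v'' \in C(u'')$, then Lemma~\ref{adjacency}(a) gives $v' \in C(\phi_1(u''))$, and since $T_Y$ is a tree this forces $\phi_1(u'') = p_{v'} = e'$, reducing to the uniqueness you already have.
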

\begin{proof} \hfill
 \begin{enumerate}[\upshape (a)]
  \item Since $v'' \in S_0(v)$ and $v' = \phi_1(v'')$, it follows from Lemma~\ref{invimgphi}(c)(ii) that $\Gamma_{v'}$ is the strict transform of $\Gamma_v$ in $Y$. Since $\phi_1(v'') = v'$, it follows that $\psi(\Gamma_{v''}) = \psi_2(\Gamma_{v'}) = \Gamma_v$. Therefore $\psi(\Gamma_{v''})$ is not a point. Lemma~\ref{invimgphi}(c)(ii) also implies that $v'$ is odd. 
 
  \item Suppose $w'' \in C(v'')$ and $m_{w''} = 2$. Let $w' = \phi_1(w'')$. Since $w'' \in C(v'')$, Lemma~\ref{adjacency}(a) implies that $w' \in C(v')$. Since odd components of $Y$ do not intersect and (a) implies that $v'$ is odd, $w'$ is even. Since $m_{w''} = 2$, Lemma~\ref{mult2} tells us that $w$ is odd and $w'$ is not an even leaf of $T_Y$. Let $T_0,T_1,T_2$ be the partition of $\phi_2^{-1}(w)$ as in Lemma~\ref{invimgphi}(c). Since $w'$ is even, Lemma~\ref{oddY} tells us that $w' \notin T_0$. Since $w'$ is not an even leaf of $T_Y$, the displayed equation in the proof of Lemma~\ref{mult2} shows that $w' \in T_1$. Since $w' \in T_1$, Lemma~\ref{invimgphi}(c)(vii) shows that $p_{w'} \in T_0$. Since $w' \in C(v')$, it follows that $v' = p_{w'} \in T_0$ and therefore $\phi_2(v') \in \phi_2(T_0) = \{w\}$, which implies that $v = w$. Finally, $w'' \in \phi_1^{-1}(w') \subseteq \phi_1^{-1}(T_1) = S_1(v)$.
  
  Conversely, suppose $w'' \in S_1(v)$. Since $v'' \in S_0(v)$, Lemma~\ref{invimgphi}(c)(i,vii) show that $w'' \in C(v'')$ and $m_{w''} = 2$. Lemma~\ref{invimgphi}(c)(iii) implies that $\# S_1(v) = s_v$.
  
  \item Suppose $v'' \in C(w'')$ and $m_{w''} = 2$. Since $v'' \in C(w'')$ and $\psi(\Gamma_{v''})$ is not a point by (a), Lemma~\ref{adjacency}(b) tells us that $v \in C(w)$. Since $m_{w''} = 2$, Lemma~\ref{mult2} tells us that $w$ is odd. 
  
  \item Suppose $p_v$ is odd. Let $u = p_v$. Let $T_0,T_1,T_2$ be the partition of $\phi_2^{-1}(u)$ as in Lemma~\ref{invimgphi}(c). Let $u' \in T_1$ be the unique vertex such that $\psi_2(\Gamma_{u'}) = \Gamma_u \cap \Gamma_v$. Since (a) implies that $\Gamma_{v'}$ is the strict transform of $\Gamma_v$ in $Y$, the proof of Lemma~\ref{invimgphi}(c)(iii) in the case of the odd vertex $u$ shows that $v' \in C(u')$. Lemma~\ref{invimgphi}(c) applied to the odd vertex $u$ tells us that $\phi_1$ induces a bijection between $\phi^{-1}(u)$ and $\phi_2^{-1}(u)$. This shows that there exists a unique $u'' \in V(T_X)$ such that $\phi_1(u'') = u'$. Since $v'$ is odd by (a), Lemma~\ref{invimgphi}(a) and Lemma~\ref{branchlocus}(a) then imply that $\psi_1^{-1}(\Gamma_{v'}) = \Gamma_{v''}$. Since $\phi_1^{-1}(u') = \{ u'' \}$, it follows that $\psi_1^{-1}(\Gamma_{u'}) = \Gamma_{u''}$. Therefore, $\Gamma_{u''} \cap \Gamma_{v''} = \psi_1^{-1}(\Gamma_{u'} \cap \Gamma_{v'}) \neq \emptyset$. This implies that either $u'' \in C(v'')
$, or $v'' \in C(u'')$. Since $v' \in C(u')$, Lemma~\ref{adjacency}(a) implies that $v'' \in C(u'')$. This proves the existence of $u''$.
  
  Suppose $u'' \in \phi^{-1}(u)$ be such that such that $v'' \in C(u'')$. Then, Lemma~\ref{adjacency}(a) implies that $\phi_1(u'') = p_{v'}$. Since $v'$ is odd (by (a)) and $\Gamma_{p_{v'}}$ intersects $\Gamma_{v'}$, Lemma~\ref{branchlocus}(a) and Lemma~\ref{invimgphi}(a) imply that $\# \phi_1^{-1}(p_{v'}) = 1$. This proves uniqueness of $u'' \in \phi^{-1}(u)$ such that $v'' \in C(u'')$.
  
  \item Suppose $w'' \in C(v'') \setminus S_2(v)$ and $m_{w''} = 1$. We will first show $\psi(\Gamma_{w''})$ is not a point. Suppose $\psi(\Gamma_{w''})$ is a point. Since $w'' \in C(v'')$, Lemma~\ref{adjacency}(b) implies that $w = \phi(w'') = \phi(v'') = v$. Since $m_{w''} = 1$, Lemma~\ref{invimgphi}(c)(i,ii,iii) then imply that $w'' \in S_2(v)$, which is a contradiction. Therefore, $\psi(\Gamma_{w''})$ is not a point. Lemma~\ref{adjacency}(a) then implies that $w \in C(v)$.
  
  Suppose $w$ is odd. Let $w' = \phi_1(w'')$. Since $\psi(\Gamma_{w''})$ is not a point,  $w'' \in S_0(w)$. Part (a) applied to $w''$ implies that $w'$ is odd. Part (a) implies that $v'$ is odd.  Since $w'' \in C(v'')$, Lemma~\ref{adjacency}(a) implies that $w' \in C(v')$. This is a contradiction since odd components of $Y$ cannot intersect. Therefore $w$ is even. This shows one inclusion.
    
  Now suppose $u \in C(v)$ is even. Let $u' \in V(T_Y)$ be the vertex corresponding to the strict transform of $\Gamma_u$ in $Y$. Part (a) implies that $v'$ is the vertex corresponding to the strict transform of $\Gamma_v$ and $v'$ is odd. Lemma~\ref{oddY} implies that $u'$ is even. This in turn implies that $\psi_2$ is an isomorphism above a neighbourhood of $\Gamma_{u}$, and therefore $u' \in C(v')$. Since $v'$ is odd and $u' \in C(v')$, Lemma~\ref{invimgphi}(b) applied to $u$ implies that $\# \phi^{-1}(u) = 1$. Let $\phi^{-1}(u) = \phi_1^{-1}(u') = \{ u'' \}$. Since $\psi_1^{-1}(\Gamma_{v'}) = \Gamma_{v''}$ and $\psi_1^{-1}(\Gamma_{u'}) = \Gamma_{u''}$, it follows that $\Gamma_{v''} \cap \Gamma_{u''} = \psi_1^{-1}(\Gamma_{v'} \cap \Gamma_{u'})$ is not empty. In particular,  $u'' \in N(v'')$. Since $\phi_1(u'') = u' \in C(v') = C(\phi_1(v''))$, Lemma~\ref{adjacency}(a) implies that $u'' \in C(v'')$. This shows the opposite inclusion.
  
  \item Since $\phi(v'') = v$ is odd, Lemma~\ref{intersect} tells us that $ \Gamma_{v''}.\Gamma_{w''} < 2$. On the other hand, since $w'' \in N(v'')$, we have that $\Gamma_{v''}.\Gamma_{w''} \geq 1$. \qedhere
 \end{enumerate}
\end{proof}

We will now compute $\sum_{v'' \in S_i(v)} \delta(v'')$ for each $i \in \{0,1,2\}$, in terms of $l_v,r_v$ and $s_v$.

\begin{lemma}\label{odd0}
Suppose $v \in V(T_B)$ is odd. Then
 \[ \sum_{v'' \in S_0(v)} \delta(v'') = \left\{ \begin{array}{ll}
-2+l_v+2s_v \ &\mathrm{if}\ p_v\ \mathrm{is}\ \mathrm{even} \\                                                                                                                                                                                                                          
-1+l_v+2s_v \ &\mathrm{if}\ p_v\ \mathrm{is}\ \mathrm{odd}.                                                                                                                                                                                                                        \end{array}
 \right. \] 
\end{lemma}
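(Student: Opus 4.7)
The plan is to reduce to a direct calculation by first pinning down the unique vertex $v''$ with $S_0(v)=\{v''\}$, then treating the three terms of $\delta(v'')$ in turn. Lemma~\ref{invimgphi}(c)(ii) and (viii) give $m_{v''}=2$ and $\Gamma_{v''}\isom\P^1_k$, so $\chi(\Gamma_{v''})=2$, and the self-term of $\delta(v'')$ contributes exactly $(1-2)\cdot 2=-2$, independently of the parity of $p_v$. (Note that $v$ has a parent in $T_B$: the root corresponds to the strict transform of $\P^1_k$ in $\mathrm{Bl}_n(\P^1_R)$, along which $f$ vanishes to order $0$, so the root is even and hence distinct from $v$.)

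For the neighbor sum, Lemma~\ref{oddprelim}(f) collapses every $\Gamma_{v''}.\Gamma_{w''}$ to $1$, so the sum equals $\#\{w''\in N(v''):m_{w''}=2\}$. The multiplicity-$2$ children of $v''$ form precisely $S_1(v)$, which has cardinality $s_v$ by Lemma~\ref{oddprelim}(b). Let $u''$ be the unique parent of $v''$ in $T_X$. Lemma~\ref{oddprelim}(c) shows that $m_{u''}=2$ forces $p_v$ to be odd; for the converse, Lemma~\ref{oddprelim}(d) gives a unique such parent, and I apply Lemma~\ref{mult2}, using that when $p_v$ is odd the vertex $\phi_1(u'')$ is the exceptional divisor in $T_Y$ above $\Gamma_v\cap\Gamma_{p_v}$, which is even but not a leaf (it has the strict transform of $\Gamma_v$ as a child). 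Hence the neighbor contribution equals $s_v$ if $p_v$ is even and $s_v+1$ if $p_v$ is odd.

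For the children sum, Lemma~\ref{oddprelim}(f) again reduces it to $\#C(v'')$. I partition $C(v'')$ by multiplicity. Lemma~\ref{oddprelim}(b) gives $s_v$ children of multiplicity $2$, namely $S_1(v)$. For multiplicity $1$: Lemma~\ref{invimgphi}(c)(iv),(vi),(vii) identify $\phi^{-1}(v)$ as a tree rooted at $v''$ isomorphic to $\phi_2^{-1}(v)$, which places $S_2(v)\subseteq C(v'')$ and contributes $l'_v$ multiplicity-$1$ children; Lemma~\ref{oddprelim}(e) then bijects the remaining multiplicity-$1$ children of $v''$ with the set of even children of $v$ in $T_B$, contributing the extra $r_v$. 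Thus $\#C(v'')=s_v+l'_v+r_v=s_v+l_v$.

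Summing the three contributions,
\[ \delta(v'') = -2 + \bigl(s_v+[p_v\ \mathrm{odd}]\bigr) + (s_v+l_v), \]
which evaluates to $-2+l_v+2s_v$ when $p_v$ is even and $-1+l_v+2s_v$ when $p_v$ is odd, as claimed. The main technical obstacle is the parent analysis: one has to lift $v''$ through $\phi_1$ to $v'$, identify the parent $p_{v'}$ in $T_Y$ as either the strict transform of $\Gamma_{p_v}$ (when $p_v$ is even) or the exceptional divisor over $\Gamma_v\cap\Gamma_{p_v}$ (when $p_v$ is odd), and then carefully invoke Lemma~\ref{mult2} to read off $m_{u''}$. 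Everything else is bookkeeping using the partition $\phi^{-1}(v)=S_0(v)\sqcup S_1(v)\sqcup S_2(v)$ provided by Lemma~\ref{invimgphi}(c).
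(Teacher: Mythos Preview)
Your proposal is correct and follows essentially the same approach as the paper's own proof: both compute $\delta(\tilde v)$ by treating the three summands separately, invoking Lemma~\ref{invimgphi}(c) for the self-term, Lemma~\ref{oddprelim}(b),(c),(d),(f) for the neighbour sum, and Lemma~\ref{oddprelim}(b),(e),(f) together with Lemma~\ref{invimgphi}(c)(iv),(vii) for the children sum. The only cosmetic difference is that you fix the unique parent $u''$ up front and read off $m_{u''}$ via Lemma~\ref{mult2}, whereas the paper sums over all $w''$ with $\tilde v\in C(w'')$ and lets Lemma~\ref{oddprelim}(c) kill the $m_{w''}=1$ terms; your phrasing ``let $u''$ be the unique parent of $v''$ in $T_X$'' tacitly uses that $\#\phi_1^{-1}(p_{v'})=1$ (which holds because $\Gamma_{p_{v'}}$ meets the odd component $\Gamma_{v'}$, hence the branch locus), but this is the same fact underlying the paper's use of Lemma~\ref{oddprelim}(d).
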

\begin{proof}
Let $S_0 = S_0(v), S_1 = S_1(v)$ and $S_2 = S_2(v)$. Lemma~\ref{invimgphi}(c)(ii) implies that $\# S_0 = 1$. Let $\tilde{v} \in S_0$. Since $S_0$ consists of a single vertex $\tilde{v}$, 
\[ \sum_{v'' \in S_0} \delta(v'') = \delta(\tilde{v}) = (1-m_{\tilde{v}})\ \chi(\Gamma_{\tilde{v}}) + \sum_{w'' \in N({\tilde{v}})} (m_{w''} - 1) \Gamma_{\tilde{v}}.\Gamma_{w''} + \sum_{w'' \in C({\tilde{v}})} \Gamma_{\tilde{v}}.\Gamma_{w''}  .\]
We will compute each of the three terms in this sum separately.

By Lemma~\ref{invimgphi}(c)(ii), 
\[ (1-m_{\tilde{v}})\ \chi(\Gamma_{\tilde{v}}) = (1-m_{\tilde{v}})\ \chi(\P^1_k) =  (1-2)(2) = -2 .\] 

Now 
\begin{align*}
 \sum_{w'' \in N({\tilde{v}})} (m_{w''} - 1) \Gamma_{\tilde{v}}.\Gamma_{w''}  
 &= \sum_{w'' \in N({\tilde{v}})} (m_{w''} - 1) \ \ \ \ (\mathrm{by \ Lemma~\ref{oddprelim}(f)}) \\
 &= \sum_{w'' \in S_1} (2 - 1) + \sum_{w'' \in C({\tilde{v}}) \setminus S_1} (1 - 1) + {\sum_{\substack{w'' \in V(T_X) \\ \tilde{v} \in C(w'') }}}  (m_{w''} - 1) \\
 & \pushright{(\mathrm{by \ Lemma~\ref{oddprelim}(b)})} \\
 &= s_v + {\sum_{\substack{w'' \in V(T_X) \\ \tilde{v} \in C(w'') }}} (m_{w''} - 1) \ \ \ \ (\mathrm{by \ Lemma~\ref{oddprelim}(b)}) \\
 &= s_v + {\sum_{\substack{w'' \in \phi^{-1}(p_v) \\ \tilde{v} \in C(w'') \\ \phi(w'') \ \mathrm{is \ odd} }}} (m_{w''} - 1) \ \ \ \ (\mathrm{by \ Lemma~\ref{oddprelim}(c)}) \\
 &= {\begin{cases}
      s_v &\textup{if $p_v$ is even} \\
      s_v+1 &\textup{if $p_v$ is odd}
     \end{cases}
}\ \ \ \ (\mathrm{by \ Lemma~\ref{oddprelim}(d)}). 
\end{align*}

Now
\begin{align*}
 \sum_{w'' \in C({\tilde{v}})} \Gamma_{\tilde{v}}.\Gamma_{w''} 
 &= \sum_{w'' \in C({\tilde{v}})} 1 \ \ \ \ (\mathrm{by \ Lemma~\ref{oddprelim}(f)}) \\
 &= {\sum_{\substack{w'' \in C({\tilde{v}}) \\ m_{w''} = 2 }}} 1 + {\sum_{w'' \in S_2}} 1 + {\sum_{\substack{w'' \in C({\tilde{v}}) \setminus S_2 \\ m_{w''} = 1  }}} 1 \\
 & \pushright{(\textup{by Lemmas~\ref{construction}(b),~\ref{invimgphi}(c)(i,iv,vii)})} \\
 &= s_v+l'_v+{\sum_{\substack{w'' \in C({\tilde{v}}) \setminus S_2 \\ m_{w''} = 1  }}} 1 \ \ \ \ \ \ \ \ \ (\mathrm{by \ Lemma~\ref{oddprelim}(b) \ and \ Lemma~\ref{invimgphi}(c)(iv)}) \\
 &= s_v+l'_v+r_v \ \ \ \ (\textup{by Lemma~\ref{oddeven} since $v$ is odd,  and by  Lemma~\ref{oddprelim}(e)}) \\
 &= s_v+l_v. 
\end{align*}

Adding the three previous equalities gives us
\[ \sum_{v'' \in S_0(v)} \delta(v'') =  \delta(\tilde{v})  =  \left\{ \begin{array}{ll}
-2+l_v+2s_v  &\mathrm{if}\ p_v\ \mathrm{is}\ \mathrm{even} \\                                                                                                                                                                                                                          
-1+l_v+2s_v  &\mathrm{if}\ p_v\ \mathrm{is}\ \mathrm{odd}.  \end{array} \right. \qedhere                                                                                                                                                                                                                        \] 
\end{proof}

\begin{lemma}\label{odd1}
Suppose $v \in V(T_B)$ is odd. Then
 \[ \sum_{v'' \in S_1(v)} \delta(v'') = s_v .\]
\end{lemma}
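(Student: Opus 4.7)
The plan is to show that each individual term $\delta(u'')$ for $u'' \in S_1(v)$ equals $1$; the conclusion will then follow immediately from the count $\# S_1(v) = s_v$ provided by Lemma~\ref{invimgphi}(c)(iii).

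Fix $u'' \in S_1(v)$ and set $u' = \phi_1(u'')$. By the definition of $T_1$ in Lemma~\ref{invimgphi}(c), there is an odd child $w \in C(v)$ with $\psi_2(\Gamma_{u'}) = \Gamma_v \cap \Gamma_w$, so $\Gamma_{u'}$ is the exceptional curve of the blow-up $Y \to \mathrm{Bl}_n(\P^1_R)$ at this intersection point. I would first identify the neighbours of $u'$ in $T_Y$: they are precisely the two vertices $v' = p_{u'}$ and $w' \in C(u')$ corresponding to the strict transforms of $\Gamma_v$ and $\Gamma_w$ in $Y$. By Lemma~\ref{oddY} both $v'$ and $w'$ are odd, so Lemma~\ref{branchlocus}(a) shows that $\Gamma_{u'}$ meets the branch locus of $\psi_1$, and Lemma~\ref{invimgphi}(a) yields $\# \phi_1^{-1}(v') = \# \phi_1^{-1}(u') = \# \phi_1^{-1}(w') = 1$. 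Name the unique preimages $\tilde v$, $u''$, and $\tilde w$ respectively.

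Next I would transfer this picture to $T_X$. Since $\Gamma_{u'} \cap \Gamma_{v'}$ and $\Gamma_{u'} \cap \Gamma_{w'}$ each consist of a single point lying on the branch locus, their preimages under $\psi_1$ are again single points, producing nontrivial intersections $\Gamma_{u''} \cap \Gamma_{\tilde v}$ and $\Gamma_{u''} \cap \Gamma_{\tilde w}$. Combined with Lemma~\ref{adjacency}(a), this forces $N(u'') = \{\tilde v, \tilde w\}$ with $\tilde v$ the parent and $\tilde w$ the child of $u''$. Since $\phi(\tilde v) = v$ and $\phi(\tilde w) = w$ are odd and $\phi_1(\tilde v), \phi_1(\tilde w)$ are odd vertices (hence not even leaves of $T_Y$), Lemma~\ref{mult2} gives $m_{\tilde v} = m_{\tilde w} = 2$, while Lemma~\ref{invimgphi}(c)(iii,viii) gives $m_{u''} = 2$ and $\Gamma_{u''} \cong \P^1_k$. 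The intersection numbers $\Gamma_{u''}.\Gamma_{\tilde v}$ and $\Gamma_{u''}.\Gamma_{\tilde w}$ are each at most $1$ by Lemma~\ref{intersect} (since $\phi(u'') = v$ is odd), and both equal $1$ because the corresponding components are neighbours.

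Assembling these ingredients yields
\[ \delta(u'') = (1-2)(2) + (2-1)\cdot 1 + (2-1)\cdot 1 + 1 = -2 + 2 + 1 = 1, \]
and summing over the $s_v$ vertices of $S_1(v)$ gives the claimed equality. The only delicate step is the combinatorial identification of $N(u'')$ as $\{\tilde v, \tilde w\}$, which requires tracking how the exceptional-divisor structure from the blow-up $Y \to \mathrm{Bl}_n(\P^1_R)$ lifts across the ramified double cover $\psi_1$; once that is in place, the computation reduces to routine bookkeeping using Lemmas~\ref{mult2} and~\ref{intersect}.
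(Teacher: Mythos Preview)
Your proof is correct and follows essentially the same approach as the paper: show $\delta(u'')=1$ for each $u''\in S_1(v)$, then use $\#S_1(v)=s_v$. The only difference is packaging: the paper invokes Lemma~\ref{neigh}(a) directly to obtain $\#N(u'')=2$, $\#C(u'')=1$, and $m_{w''}=2$ for all $w''\in N(u'')$, whereas you unpack this by tracing the structure in $T_Y$ and appealing to Lemmas~\ref{adjacency}(a) and~\ref{mult2}; either way the computation of $\delta(u'')$ is identical.
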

\begin{proof}
Let $S_1 = S_1(v)$. Let $\tilde{v}$ be the unique element of $S_0(v)$. Suppose $v'' \in S_1$. Lemma~\ref{invimgphi}(c)(iii,viii) tells us that $\Gamma_{v''} \isom \P^1_k$, $v'' \in C(\tilde{v})$, $m_{v''} = 2$ and $\psi(\Gamma_{v''}) = \Gamma_v \cap \Gamma_u$ for an odd $u \in C(v)$. 

Since $\psi(\Gamma_{v''})$ is a point that belongs to two odd components of $(\mathrm{Bl}_n(\P^1_R))_s$, Lemma~\ref{neigh}(a)(i,ii) tell us that $\# N(v'') = 2$ and $\# C(v'') = 1$. Suppose $w'' \in N(v'')$. Lemma~\ref{neigh}(a)(iii,iv) tell us that $\phi(w'')$ is odd and $m_{w''} = 2$. 
Since $\phi(w'')$ is odd, Lemma~\ref{intersect} tells us that $\Gamma_{v''}.\Gamma_{w''} < 2$. On the other hand, since $w'' \in N(v'')$, we have that $\Gamma_{v''}.\Gamma_{w''} \geq 1$. This implies that
\begin{align*}
 \delta(v'') &= (1-m_{v''})\ \chi(\Gamma_{v''}) + \sum_{w'' \in N({v''})} (m_{w''} - 1) \Gamma_{v''}.\Gamma_{w''} + \sum_{w'' \in C({v''})} \Gamma_{v''}.\Gamma_{w''} \\
 &= (1-2)2 + (2-1)1 + (2-1)1 + 1 \\
 &= 1 .
\end{align*}
Therefore
\[ \sum_{v'' \in S_1(v)} \delta(v'') = \sum_{v'' \in S_1(v)} 1 = s_v \ \ \ \ ({\textup{since  Lemma~\ref{invimgphi}(c)(iii) implies that $\# S_1 = s_v$}}). \qedhere \] 
\end{proof}

\begin{lemma}\label{odd2}
Suppose $v \in V(T_B)$ is odd. Then
 \[ \sum_{v'' \in S_2(v)} \delta(v'') = l_v-r_v .\]
\end{lemma}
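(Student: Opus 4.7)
The plan is to mirror the structure of the proof of Lemma~\ref{odd1}: show that $\delta(v'') = 1$ for every $v'' \in S_2(v)$, and then use the cardinality $\# S_2(v) = l'_v = l_v - r_v$ (from Lemma~\ref{invimgphi}(c)(iv)) to conclude.

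First I would fix $v'' \in S_2(v)$ and extract the relevant data from Lemma~\ref{invimgphi}(c): $m_{v''} = 1$ by part~(iv), $\Gamma_{v''} \isom \P^1_k$ by part~(viii), and (by the definition of $T_2$ together with Lemma~\ref{invimgphi}(c)(v)) the image $\psi(\Gamma_{v''})$ is a point lying on the unique odd component $\Gamma_v$ of $(\mathrm{Bl}_n(\P^1_R))_s$. Because $m_{v''} = 1$, the first summand $(1-m_{v''})\chi(\Gamma_{v''})$ in $\delta(v'')$ vanishes.

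Next I would apply Lemma~\ref{neigh}(a) to $v''$: since $\psi(\Gamma_{v''})$ lies on a unique odd component, part~(i) gives $\# N(v'') = 1$ and part~(ii) gives $\# C(v'') = 0$, so the sum over $C(v'')$ in $\delta(v'')$ is empty. Letting $w''$ denote the unique element of $N(v'')$, parts~(iii,iv) of Lemma~\ref{neigh}(a) yield that $\phi(w'')$ is odd and $m_{w''} = 2$. To compute $\Gamma_{v''}.\Gamma_{w''}$, I would invoke Lemma~\ref{intersect}: since $\phi(w'')$ is odd, condition~(i) of that lemma fails, so $\Gamma_{v''}.\Gamma_{w''} \le 1$; transversality and $w'' \in N(v'')$ force $\Gamma_{v''}.\Gamma_{w''} \ge 1$, hence equality. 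Therefore
\[ \delta(v'') = 0 + (m_{w''}-1)\,\Gamma_{v''}.\Gamma_{w''} + 0 = (2-1)(1) = 1. \]

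Summing over $v'' \in S_2(v)$ and using $\# S_2(v) = l'_v = l_v - r_v$ from Lemma~\ref{invimgphi}(c)(iv) together with Definition~\ref{lrv} gives the desired equality. There is no substantive obstacle: all the heavy lifting (identifying $\psi(\Gamma_{v''})$, locating it on a single odd component, counting neighbours, bounding multiplicities and intersection numbers) has already been packaged into Lemmas~\ref{invimgphi}, \ref{neigh}, and \ref{intersect}, and the computation of $\delta(v'')$ reduces to a one-line arithmetic check once these are in hand.
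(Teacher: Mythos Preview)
Your proposal is correct and follows essentially the same approach as the paper's proof: both show $\delta(v'')=1$ for each $v''\in S_2(v)$ via Lemmas~\ref{invimgphi}, \ref{neigh}, and~\ref{intersect}, then multiply by $\#S_2(v)=l'_v=l_v-r_v$. The only cosmetic difference is that the paper names the unique neighbour as $\tilde v\in S_0(v)$ and reads off $m_{\tilde v}=2$ from Lemma~\ref{invimgphi}(c)(ii), whereas you obtain $m_{w''}=2$ directly from Lemma~\ref{neigh}(a)(iv); either route works.
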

\begin{proof}
Let $S_2 = S_2(v)$ and $S_0(v) = \{ \tilde{v} \}$. Suppose $v'' \in S_2$. Lemma~\ref{invimgphi}(c)(iv,viii) tells us that $\Gamma_{v''} \isom \P^1_k$, $v'' \in C(\tilde{v})$, $m_{v''} = 1$ and $\psi(\Gamma_{v''}) = \Gamma_v \cap H$ where $H$ is an irreducible horizontal divisor occuring in $(f)$ on $\mathrm{Bl}_n(\P^1_R)$. 

Since $\psi(\Gamma_{v''})$ is a point that belongs to a unique odd component of $(\mathrm{Bl}_n(\P^1_R))_s$, Lemma~\ref{neigh}(a)(i,ii) tell us that $\# N(v'') = 1$ and $\# C(v'') = 0$. Since $v'' \in C(\tilde{v})$, we have that $N(v'') = \{ \tilde{v} \}$. Lemma~\ref{invimgphi}(c)(ii) implies that $m_{\tilde{v}} = 2$. Since $\tilde{v} \in N(v'')$ and $\phi(\tilde{v})$ ($=v$) is odd, Lemma~\ref{intersect} applied to the pair $v'',\tilde{v}$ tells us that $\Gamma_{v''}.\Gamma_{w''} < 2$. On the other hand, since $\tilde{v} \in N(v'')$, we have that $\Gamma_{v''}.\Gamma_{w''} \geq 1$. This implies that
\begin{align*} 
\delta(v'') &= (1-m_{v''})\ \chi(\Gamma_{v''}) + \sum_{w'' \in N({v''})} (m_{w''} - 1) \Gamma_{v''}.\Gamma_{w''} + \sum_{w'' \in C({v''})} \Gamma_{v''}.\Gamma_{w''} \\
&= (1-1)2 + (2-1)1 + 0 \\
&= 1 .
\end{align*}
Therefore
\[ \sum_{v'' \in S_2(v)} \delta(v'') = \sum_{v'' \in S_2(v)} 1 = l'_v = l_v-r_v \ \ \ \ ({\textup{since  Lemma~\ref{invimgphi}(c)(iv) implies that $\# S_2 = l'_v$}}) . \qedhere \] 
\end{proof}

\begin{lemma}\label{oddtotal}
 Suppose $v \in V(T_B)$ is odd (in particular, $v$ is not the root). Then 
 \[ D(v) = \left\{ 
 \begin{array}{ll}
 -2-r_v+3s_v+2l_v \ &\mathrm{if}\ v \ \mathrm{is\ odd \ and}\ p_v\ \mathrm{is\ even} \\                                                                                                                                                                                                                          
 -1-r_v+3s_v+2l_v \ &\mathrm{if}\ v \ \mathrm{is\ odd \ and}\ p_v\ \mathrm{is\ odd}  .
 \end{array}
 \right.  \]
\end{lemma}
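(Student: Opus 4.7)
The proof is essentially bookkeeping: combine the three partial sums already established in Lemmas~\ref{odd0}, \ref{odd1}, and \ref{odd2}.

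The plan is to recall that by Lemma~\ref{invimgphi}(c)(i), the sets $S_0(v), S_1(v), S_2(v)$ form a partition of $\phi^{-1}(v)$, so by definition of $D(v)$,
\[ D(v) = \sum_{v'' \in \phi^{-1}(v)} \delta(v'') = \sum_{v'' \in S_0(v)} \delta(v'') + \sum_{v'' \in S_1(v)} \delta(v'') + \sum_{v'' \in S_2(v)} \delta(v''). \]
Then I would substitute the three values: Lemma~\ref{odd0} gives the $S_0$-sum as $-2+l_v+2s_v$ or $-1+l_v+2s_v$ depending on the parity of $p_v$; Lemma~\ref{odd1} gives the $S_1$-sum as $s_v$; and Lemma~\ref{odd2} gives the $S_2$-sum as $l_v - r_v$.

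Adding, in the case $p_v$ is even, produces $(-2+l_v+2s_v) + s_v + (l_v-r_v) = -2-r_v+3s_v+2l_v$, and in the case $p_v$ is odd produces $(-1+l_v+2s_v) + s_v + (l_v-r_v) = -1-r_v+3s_v+2l_v$, which matches the claim. There is no real obstacle here beyond correctly invoking the partition; all of the genuine work (the combinatorial analysis of $\phi_1, \phi_2, \phi$ and the local computation of each $\delta(v'')$) has already been carried out in Lemmas~\ref{invimgphi}--\ref{odd2}.
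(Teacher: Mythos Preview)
Your proof is correct and is exactly the paper's approach: the paper's proof of Lemma~\ref{oddtotal} simply reads ``Combine Lemmas~\ref{odd0}, \ref{odd1}, \ref{odd2}.'' Your write-up just makes explicit the use of the partition from Lemma~\ref{invimgphi}(c)(i) and spells out the arithmetic.
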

\begin{proof}
 Combine Lemmas~\ref{odd0},\ref{odd1},\ref{odd2}.
\end{proof}

\subsection{Formula for $D(v)$}
\begin{thm}\label{localDel}
 Let $v \in V(T_B)$. Then 
 \[ D(v) = \left\{ 
 \begin{array}{ll}
 (l_v \bmod 2)+2r_v+2s_v \ &\mathrm{if}\  v\ \mathrm{is \ even} \\
 -2-r_v+3s_v+2l_v \ &\mathrm{if}\ v \ \mathrm{is\ odd \ and}\ p_v\ \mathrm{is\ even} \\                                                                                                                                                                                                                          
 -1-r_v+3s_v+2l_v \ &\mathrm{if}\ v \ \mathrm{is\ odd \ and}\ p_v\ \mathrm{is\ odd}  .
 \end{array}
 \right. \] 
\end{thm}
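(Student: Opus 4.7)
The statement splits into three cases according to whether $v$ is even, or $v$ is odd with an even parent, or $v$ is odd with an odd parent. My plan is to treat the even and odd cases separately, since the combinatorial analyses of $\phi^{-1}(v)$ are genuinely different in these two situations (in the even case $\phi^{-1}(v)$ has at most $2$ elements all of multiplicity $1$, whereas in the odd case we have the richer partition $S_0(v) \sqcup S_1(v) \sqcup S_2(v)$ from Lemma~\ref{invimgphi}(c)). In both cases the heavy lifting has already been done: I have explicit formulas for each piece in terms of $l_v, r_v, s_v$.

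For the even case, my plan is to directly invoke Lemma~\ref{eventotal}, which asserts precisely that $D(v) = (l_v \bmod 2) + 2r_v + 2s_v$. That lemma is itself the sum of the three contributions $D_0(v) = 0$ (Lemma~\ref{evenfirst}), $D_1(v) = (l_v \bmod 2) + r_v$ (Lemma~\ref{evensecond}), and $D_2(v) = r_v + 2s_v$ (Lemma~\ref{eventhird}), so no additional work is required; I would simply quote it.

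For the odd case, the plan is to invoke Lemma~\ref{oddtotal}, which gives the two subcases in terms of the parity of $p_v$. Again this is already proven by summing $\sum_{v'' \in S_i(v)} \delta(v'')$ for $i=0,1,2$ as computed in Lemmas~\ref{odd0}, \ref{odd1}, \ref{odd2}, using $D(v) = \sum_{v'' \in \phi^{-1}(v)} \delta(v'')$ together with the partition $\phi^{-1}(v) = S_0(v) \sqcup S_1(v) \sqcup S_2(v)$.

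Since Theorem~\ref{localDel} is really just the combined statement of Lemmas~\ref{eventotal} and \ref{oddtotal}, there is no genuine obstacle at this stage — all the difficult combinatorial bookkeeping (tracking multiplicities, intersection numbers, and the behavior of $\phi_1, \phi_2$ over even versus odd vertices) has been absorbed into the earlier lemmas. The proof I would write consists of a single sentence: combine Lemmas~\ref{eventotal} and \ref{oddtotal}.
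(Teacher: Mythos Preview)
Your proposal is correct and matches the paper's own proof essentially verbatim: the paper's proof of Theorem~\ref{localDel} reads ``This follows directly from Lemma~\ref{eventotal} and Lemma~\ref{oddtotal}.'' Your additional remarks about how those lemmas are in turn assembled from Lemmas~\ref{evenfirst}--\ref{eventhird} and \ref{odd0}--\ref{odd2} are accurate but already recorded in the proofs of Lemmas~\ref{eventotal} and \ref{oddtotal} themselves.
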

\begin{proof}
 This follows directly from Lemma~\ref{eventotal} and Lemma~\ref{oddtotal}.
\end{proof}

\section{Comparison of the two discriminants}\label{comparison}
One might hope that the inequality $D(v) \leq d(v)$ holds for every vertex $v \in V(T_B)$, but this is not true. It is however true after a slight alteration of the function $D$. 

\subsection{A new break-up of the Deligne discriminant}
Define a new function $E$ on $V(T_B)$ as follows:
\[ E(v) = \left\{ { \begin{array}{ll}
            \displaystyle{- (l_v \bmod 2) - \sum_{\substack{v' \in C(v) \\ v' \ \mathrm{odd}}} \left( 2-  \wt_{v'}(\wt_{v'}-1)  \right)} \ &\textup{if $v$ is even} \\
            \displaystyle{r_v + s_v + 2 -  \wt_v(\wt_v-1)  - \sum_{\substack{v' \in C(v) \\ v' \ \mathrm{odd}}} \left( 2-  \wt_{v'}(\wt_{v'}-1) \right)} \ &\textup{if $v$ is odd, $p_v$ even} \\
            \displaystyle{r_v + s_v + 1 -  \wt_v(\wt_v-1)  - \sum_{\substack{v' \in C(v) \\ v' \ \mathrm{odd}}} \left( 2-  \wt_{v'}(\wt_{v'}-1) \right)} \ &\textup{if $v$ and $p_v$ are odd.}
           \end{array} } \right.
 \]
 For $v \in V(T_B)$, set $D'(v) := D(v) +E(v)$.
 
 Using Lemma~\ref{oddeven}, we get  
 \[ \sum_{\substack{v' \in C(v) \\ v' \ \textrm{odd}}} 2 = \begin{cases}2s_v \ \textup{if $v$ is odd} \\ 2r_v \ \textup{if $v$ is even} \end{cases} .\]
 We can use this, along with Theorem~\ref{localDel} to simplify the expression of $D'$. 
 \begin{equation}\label{formulaforD'} D'(v) = \left\{ { \begin{array}{ll}
            2s_v + \sum_{\substack{v' \in C(v) \\ v' \ \textrm{odd}}}  \wt_{v'}(\wt_{v'}-1)   \ &\textup{if $v$ is even} \\
            2 \big(l_v+s_v \big) - \wt_v(\wt_v-1)  + \sum_{\substack{v' \in C(v) \\ v' \ \textrm{odd}}} \wt_{v'}(\wt_{v'}-1)   \ &\textup{if $v$ is odd}
           \end{array} } \right .
 \end{equation}
 
 \begin{lemma}\label{breakupeq} The following equalities hold.
  \[ \sum_{\substack{v \in V(T_B) \\ v \ \mathrm{even}}} \sum_{\substack{v' \in C(v) \\ v' \ \mathrm{odd}}} -\left( 2-  \wt_{v'}(\wt_{v'}-1)  \right)  + \sum_{\substack{v \in V(T_B) \\ v \ \mathrm{odd}}} \left( 2 -  \wt_v(\wt_v-1)  - \sum_{\substack{v' \in C(v) \\ v' \ \mathrm{odd}}} \left( 2-  \wt_{v'}(\wt_{v'}-1) \right) \right)  = 0 .\]
  \[ \sum_{\substack{v \in V(T_B) \\ v \ \mathrm{even}}} -(l_v \bmod 2) + \sum_{\substack{v \in V(T_B) \\ v \ \mathrm{odd}}} r_v = 0  .\]
  \[ \sum_{\substack{v \in V(T_B) \\ v \ \mathrm{odd} \\ \textup{$p_v$ is odd}}} -1 + \sum_{\substack{v \in V(T_B) \\ v \ \mathrm{odd}}} s_v = 0  .\] 
 \end{lemma}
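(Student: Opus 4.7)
My plan is to prove all three equalities by swapping the order of summation, using the fact that each non-root vertex $w$ of $T_B$ has exactly one parent $p_w$, so sums of the form $\sum_v \sum_{w \in C(v)}$ can be rewritten as $\sum_{w \text{ not root}}$. The two auxiliary facts I will use repeatedly are: (a) the root of $T_B$ corresponds to $\mathbb{P}^1_R$ and the valuation of $f$ along its special fiber is $0$ (since $f \in R[x]$ is monic of positive degree), so the root is always an \emph{even} vertex; (b) by Lemma~\ref{oddeven}, for a child $w$ of $v$, the parity of $w$ equals the parity of $\wt_w$ when $v$ is even and the opposite parity when $v$ is odd. In particular, if $v$ is odd then $r_v$ (children of odd weight) counts the even children of $v$ and $s_v$ (children of even weight) counts the odd children of $v$; if $v$ is even the roles reverse.

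For the first equality, write $f(u) := 2 - \wt_u(\wt_u - 1)$ for $u$ odd. Swapping the order in the double sums and grouping by the child gives
\[
\sum_{v \text{ even}} \sum_{\substack{v' \in C(v)\\ v' \text{ odd}}} f(v') \;+\; \sum_{v \text{ odd}} \sum_{\substack{v' \in C(v)\\ v' \text{ odd}}} f(v') \;=\; \sum_{\substack{v' \text{ odd}\\ v' \neq \text{root}}} f(v'),
\]
since every odd non-root vertex has a unique parent which is either even or odd. Because the root is even (fact (a)), the middle sum $\sum_{v \text{ odd}} f(v)$ runs over exactly the set of odd non-root vertices, and thus cancels the two double sums with the correct signs.

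For the second equality, Lemma~\ref{lmod2} tells us that $l_v \bmod 2 = 1$ precisely when $v$ is even and has an odd parent (and $l_v$ is even for the root). Therefore
\[
\sum_{v \text{ even}} (l_v \bmod 2) \;=\; \#\{v \in V(T_B) : v \text{ even},\ p_v \text{ odd}\}.
\]
On the other hand, by fact (b), if $v$ is odd then $r_v$ equals the number of even children of $v$, so $\sum_{v \text{ odd}} r_v$ counts the pairs $(v, w)$ with $v$ odd and $w$ an even child of $v$, which is exactly the same set $\{w : w \text{ even},\ p_w \text{ odd}\}$ (the root is excluded automatically because it has no parent). Hence the two sums agree, proving equality after moving one to the other side.

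For the third equality, again by fact (b), if $v$ is odd then $s_v$ equals the number of odd children of $v$, so
\[
\sum_{v \text{ odd}} s_v \;=\; \#\{(v, w) : v \text{ odd},\ w \in C(v),\ w \text{ odd}\} \;=\; \#\{w \text{ odd} : p_w \text{ odd}\},
\]
which is exactly the cardinality matched by $\sum_{v \text{ odd},\, p_v \text{ odd}} 1$. The identity follows. The only potential subtlety in all three computations is bookkeeping at the root, and this is handled uniformly by fact (a); this is the step I would double-check most carefully, since the cancellation in the first equality depends precisely on the root being even.
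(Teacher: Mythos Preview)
Your proof is correct and follows essentially the same approach as the paper: both arguments rewrite the double sums $\sum_v \sum_{v' \in C(v)}$ as single sums over non-root vertices indexed by their unique parent, invoke Lemma~\ref{lmod2} and Lemma~\ref{oddeven} to translate the parity conditions, and use the fact that the root is even to handle the boundary term. Your explicit justification that the root is even (because $f \in R[x]$ is monic, so its valuation along the special fiber of $\P^1_R$ is zero) is a nice touch that the paper leaves implicit.
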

\begin{proof} The first equality can be rewritten as
 \[ \sum_{v \in V(T_B)} \sum_{\substack{v' \in C(v) \\ v' \ \mathrm{odd}}} -\left( 2-  \wt_{v'}(\wt_{v'}-1)  \right) + \sum_{\substack{v \in V(T_B) \\ v \ \mathrm{odd}}} \left( 2 -  \wt_v(\wt_v-1)  \right)  = 0 .\]
Since the root is an even vertex, every odd vertex has a parent. This implies that 
\[ \sum_{v \in V(T_B)}  \sum_{\substack{v' \in C(v) \\ v' \ \mathrm{odd}}} -\left( 2-  \wt_{v'}(\wt_{v'}-1)  \right) = - \sum_{\substack{v \in V(T_B) \\ v \ \mathrm{odd}}} \left( 2 -  \wt_v(\wt_v-1)  \right)   .\]
 We have that
\begin{align*}
 \sum_{\substack{v \in V(T_B) \\ v \ \mathrm{even}}} -(l_v \bmod 2) &= \sum_{\substack{v \in V(T_B) \\ v \ \mathrm{even} \\ v \ \mathrm{has \ an \ odd \ parent}}} -1 \ \ \ \ (\mathrm{by\ Lemma~\ref{lmod2}}) \\
 &= \sum_{\substack{w \in V(T_B) \\ w \ \mathrm{odd}}} \sum_{\substack{v \in C(w) \\ w \ \mathrm{even}}}  -1  \\
 &= \sum_{\substack{w \in V(T_B) \\ w \ \mathrm{odd}}} -r_w \ \ \ \ (\mathrm{by\ Lemma~\ref{oddeven}}) .
\end{align*}
 We have that
\begin{align*}
 \sum_{\substack{v \in V(T_B) \\ v \ \mathrm{odd} \\ \mathrm{p_v\ is\ odd}}} -1 &= \sum_{\substack{w \in V(T_B) \\ w \ \mathrm{odd}}} \sum_{\substack{v \in C(w) \\ w \ \mathrm{odd}}}  -1  \\
 &= \sum_{\substack{w \in V(T_B) \\ w \ \mathrm{odd}}} -s_w \ \ \ \ (\mathrm{by\ Lemma~\ref{oddeven}}) . 
\end{align*} \qedhere
\end{proof}

\begin{lemma}\label{newsum}
\[ \sum_{v \in V(T_B)} E(v) = 0 .\] 
\end{lemma}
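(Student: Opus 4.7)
The plan is to split the sum $\sum_{v \in V(T_B)} E(v)$ into the three groups corresponding to the three cases in the definition of $E(v)$, and then regroup the resulting terms so that each of the three equalities in Lemma~\ref{breakupeq} can be applied to a separate piece.

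First, I would write
\[ \sum_{v \in V(T_B)} E(v) = \sum_{\substack{v \ \mathrm{even}}} E(v) + \sum_{\substack{v \ \mathrm{odd} \\ p_v \ \mathrm{even}}} E(v) + \sum_{\substack{v \ \mathrm{odd} \\ p_v \ \mathrm{odd}}} E(v). \]
The two ``odd'' cases differ only by an additive constant ($2$ vs.\ $1$), so I would merge them by writing $\sum_{v \ \mathrm{odd}} \bigl(r_v + s_v + 2 - \wt_v(\wt_v-1) - \sum_{v' \in C(v),\, v' \ \mathrm{odd}}(2 - \wt_{v'}(\wt_{v'}-1))\bigr) - \sum_{v \ \mathrm{odd},\, p_v \ \mathrm{odd}} 1$. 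This repackages the total into three natural pieces.

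Second, I would peel off the three pieces to match Lemma~\ref{breakupeq} exactly. The $\wt$-terms across even and odd vertices combine to
\[ \sum_{\substack{v \ \mathrm{even}}} \sum_{\substack{v' \in C(v) \\ v' \ \mathrm{odd}}} -\!\bigl(2 - \wt_{v'}(\wt_{v'}-1)\bigr) + \sum_{\substack{v \ \mathrm{odd}}} \!\bigl(2 - \wt_v(\wt_v-1) - \!\sum_{\substack{v' \in C(v) \\ v' \ \mathrm{odd}}} \!\bigl(2 - \wt_{v'}(\wt_{v'}-1)\bigr) \bigr), \]
which vanishes by the first equality of Lemma~\ref{breakupeq}. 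The remaining ``weight-class'' terms split cleanly as $\sum_{v \ \mathrm{even}} -(l_v \bmod 2) + \sum_{v \ \mathrm{odd}} r_v$, which vanishes by the second equality, and $\sum_{v \ \mathrm{odd},\, p_v \ \mathrm{odd}} -1 + \sum_{v \ \mathrm{odd}} s_v$, which vanishes by the third equality.

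Adding these three zeros gives $\sum_{v \in V(T_B)} E(v) = 0$. The proof is purely a bookkeeping exercise, so there is no real obstacle: the only potentially error-prone step is the reindexing of the merged ``odd'' sum into the form demanded by Lemma~\ref{breakupeq}, in particular making sure that the $+2$ vs.\ $+1$ distinction between the two odd cases produces exactly the $-1$ over odd vertices with odd parents that appears in the third equality.
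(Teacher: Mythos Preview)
Your proposal is correct and is essentially the same approach as the paper's: the paper's proof simply states that the sum of the left-hand sides of the three equalities in Lemma~\ref{breakupeq} equals $\sum_{v \in V(T_B)} E(v)$, and your write-up is exactly the verification of this claim, with the merging of the two odd cases producing the $-1$ over odd vertices with odd parents. No issues.
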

\begin{proof}
The sum of the left hand sides of the three equalities in Lemma~\ref{breakupeq} equals  $\sum_{v \in V(T_B)} E(v)$, which is therefore $0$.
\end{proof}

For an odd $v \in V(T_B)$ such that $\wt_v > 2$, let $L_v = \{ w \in C(v) \ | \ \wt_{w} = 2  \}$. 
Define a new function $D''$ on $V(T_B)$ as follows:
\[ D''(v) = \left\{ 
\begin{array}{ll}
D'(v)-2 \ &\textup{if $v$ is an odd leaf and } \wt_v = 2 \\
D'(v) \ &\textup{if  $v$ is odd, not a leaf, and } \wt_v = 2 \\
D'(v)+2\# L_v \ &\textup{if $v$ is odd, and $\wt_v > 2$} \\
D'(v) \ &\textup{if $v$ is even}.
\end{array}
\right. \]

\begin{lemma}\label{newnewsum}
\[ \sum_{v \in V(T_B)} D''(v) = \sum_{v \in V(T_B)} D'(v) .\]  
\end{lemma}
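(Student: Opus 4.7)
The plan is to compute $\sum_{v \in V(T_B)} \bigl(D''(v) - D'(v)\bigr)$ directly from the piecewise definition of $D''$ and show it vanishes. That difference equals $-2$ on odd leaves with $\wt_v=2$, equals $+2|L_v|$ on odd vertices with $\wt_v > 2$, and is $0$ elsewhere. So the claim reduces to the combinatorial equality $N_{\mathrm{leaf}} = N_{\mathrm{pair}}$, where
\[ N_{\mathrm{leaf}} = \#\{v \in V(T_B) : v \text{ is an odd leaf},\ \wt_v = 2\}, \qquad N_{\mathrm{pair}} = \sum_{\substack{v \text{ odd} \\ \wt_v > 2}} |L_v|. \]

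The core input is a structural lemma on weight-$2$ subtrees of $T_B$: if $w \in V(T_B)$ satisfies $\wt_w = 2$, then $T_w$ is a simple path $w = u_0, u_1, \ldots, u_k$ in which each $u_i$ has weight $2$, each non-terminal $u_i$ has unique child $u_{i+1}$, the endpoint $u_k$ is a leaf of $T_B$, and all $u_i$ share a common parity. For the path claim, weights are monotone non-increasing along descending chains and bounded below by $2$ (Lemma~\ref{weight}), so every vertex in $T_w$ has weight exactly $2$; writing
\[ \wt_w = l'_w + \sum_{w_i \in C(w)} \wt_{w_i} \]
(the horizontal divisors meeting $T_w$ split as those attached to $\Gamma_w$ plus those attached to the subtrees $T_{w_i}$), and combining with $\wt_{w_i} \geq 2$, one sees that $w$ has at most one child. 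For the parity claim, Lemma~\ref{oddeven} applied with $\wt_{u_{i+1}} = 2$ (even) shows that a weight-$2$ child always has the same parity as its parent, so parity propagates along the path.

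With this in hand, I would partition the weight-$2$ vertices of $T_B$ into maximal weight-$2$ paths. Each such path has a top vertex $v_0$ whose parent $p_{v_0}$ exists and satisfies $\wt_{p_{v_0}} > 2$ (the root has weight $2g+2 \geq 6$). By the parity claim applied at the top of the path, every vertex of the path — in particular the leaf $u_k$ — has the same parity as $p_{v_0}$. Hence a single path contributes $1$ to $N_{\mathrm{pair}}$ exactly when $p_{v_0}$ is odd, and contributes $1$ to $N_{\mathrm{leaf}}$ exactly when $u_k$ is odd; these conditions coincide. Since every pair counted by $N_{\mathrm{pair}}$ is the top of a unique maximal weight-$2$ path and every odd weight-$2$ leaf is the terminus of a unique such path, summing over paths yields $N_{\mathrm{leaf}} = N_{\mathrm{pair}}$, as desired.

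The main obstacle is verifying the structural lemma — specifically, justifying the splitting $\wt_w = l'_w + \sum_i \wt_{w_i}$, which rests on each of the $2g+2$ horizontal divisors intersecting $(\mathrm{Bl}_n(\P^1_R))_s$ at a single point lying on a unique component, and exploiting that children of a vertex arise from blow-ups at points through which at least two horizontal divisors pass. Once the path structure and parity-preservation are established, the counting identity $N_{\mathrm{leaf}} = N_{\mathrm{pair}}$ is a one-line per-path check.
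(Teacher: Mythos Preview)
Your proposal is correct and follows essentially the same approach as the paper: both arguments reduce to matching odd weight-$2$ leaves with the elements of the sets $L_v$ via the observation that a weight-$2$ subtree $T_w$ is a chain of weight-$2$ vertices all sharing the parity of $p_w$ (using Lemma~\ref{weight} for the lower bound and Lemma~\ref{oddeven} for parity preservation). The paper organizes this as a bijection $\kappa\colon L_v \to M_v$ for each odd $v$ with $\wt_v > 2$, while you partition by maximal weight-$2$ paths and count per path; these are the same idea, and the splitting $\wt_w = l'_w + \sum_{w_i \in C(w)} \wt_{w_i}$ you flag as the main obstacle is already used (and justified by the definitions) in the proof of Lemma~\ref{weightls}.
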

\begin{proof}
For an odd leaf $v \in V(T_B)$ such that $\wt_v = 2$, let $q_v$ denote the least ancestor of $v$ such that $\wt_{q(v)} \geq 3$ (here least ancestor means the ancestor farthest away from the root); such an ancestor exists as the root has weight $2g+2 \geq 3$. If $v \in V(T_B)$ is odd and $\wt_v = 2$, then $p_v$ must also be odd by Lemma~\ref{oddeven}. A repeated application of this fact tells us that if $v$ is an odd leaf such that $\wt_v = 2$, then $q_v$ is odd. 

For any vertex $v \in V(T_B)$, let $T_v$ denote the complete subtree of $T_B$ with root $v$ (see section 8 for the definition of complete subtree). Suppose $v$ is an odd vertex such that $\wt_v > 2$. We will now prove the following three claims.
\begin{itemize}
 \item If $w \in L_v$ and $u \in T_w$, then $u$ is odd and $\wt_u = 2$.
 \item If $w \in L_v$, then $T_w$ is a chain (that is, every vertex in $T_w$ has at most one child). 
 \item If $v' \in V(T_B)$ is an odd leaf such that $\wt_{v'} = 2$ and $q_{v'} = v$, then there exists a unique $w \in L_v$ such that $v' \in V(T_w)$. 
\end{itemize}

Suppose $w \in L_v$ and $u \in T_w$. Since $u \in T_w$, the definition of the function $\wt$ tells us that $\wt_u \leq \wt_w = 2$. On the other hand, Lemma~\ref{weight} tells us that $\wt_u \geq 2$. Therefore, $\wt_u = 2$. A repeated application of Lemma~\ref{oddeven} along the path from $v$ to $u$ tells us that $u$ is odd. This proves the first claim.

Suppose $w \in L_v$ and $u \in T_w$. Suppose $u_1,u_2 \in C(u)$ are distinct. The first claim shows $\wt_{u_1} = \wt_{u_2} = 2$. The definition of $\wt$ then tells us that $\wt_w \geq \wt_u \geq \wt_{u_1}+\wt_{u_2}$. Since $\wt_w = 2$ and $\wt_{u_1}+\wt_{u_2} = 4$, this is a contradiction. Therefore every vertex in $T_v$ has at most one child, and this proves the second claim.

Suppose $v' \in V(T_B)$ is an odd leaf such that $\wt_{v'} = 2$ and $q_{v'} = v$. Let $w$ be the greatest ancestor of $v'$ such that $\wt_w = 2$ (here greatest ancestor means the ancestor closest to the root). Then, $\wt_{p_w} > 2$. The definition of $q$ then implies $p_w = q_{v'} = v$. This implies that $w \in L_v$. If $w_1,w_2 \in L_v$, then $T_{w_1}$ and $T_{w_2}$ have no vertices in common. This proves that every $v' \in M_v$ can belong to $V(T_w)$ for at most one $w \in L_v$. This finishes the proof of the third claim.

Let $M_v = \{ v' \in V(T_B) \ | \ \textup{$v'$ is an odd leaf},\ \wt_{v'} = 2,\ q_{v'} = v  \}$. We will now use the claims above to show that there is a bijection $\kappa \colon L_v \rightarrow M_v$. Let $w \in L_v$. Let $v'$ be the unique leaf in the chain $T_w$. Then $v'$ is an odd leaf and $\wt_{v'} = 2$. Furthermore, $w$ is an ancestor of $v'$ such that $\wt_{w} = 2$ and $\wt_v= \wt_{p_w} > 2$, which shows $q_{v'} = v$. Set $\kappa(w) = v'$. The third claim shows that $\kappa$ is a bijection. Therefore $\# M_v = \# L_v$.

This implies that
\[ \sum_{\substack{v' \ \mathrm{is \ an \ odd \ leaf} \\ \wt_{v'} = 2}} 2 = \sum_{\substack{v \ \mathrm{odd} \\ \wt_v > 2}} \sum_{v' \in M_v} 2 = \sum_{\substack{v \ \mathrm{odd} \\ \wt_v > 2}} 2 \ \#M_v = \sum_{\substack{v \ \mathrm{odd} \\ \wt_v > 2}} 2 \#L_v .\]
This tells us that
\[ \sum_{v \in V(T_B)}( D''(v) - D'(v) ) = \sum_{\substack{v \in V(T_B) \\ v \ \mathrm{odd \ leaf} \\ \wt_v = 2}} -2 + \sum_{\substack{v \in V(T_B) \\ v \ \mathrm{odd} \\ \wt_v > 2}} 2\# L_v = 0. \qedhere \]
\end{proof}

\begin{lemma}\label{weightls} \hfill
\begin{enumerate}[\upshape (a)]
 \item If $v \in V(T_B)$, then \[ \wt_v \geq l'_v + 3r_v + 2s_v \geq l_v+2s_v.\]
 \item If $r_v = s_v = 0$, then $\wt_v = l'_v$.
\end{enumerate} 
\end{lemma}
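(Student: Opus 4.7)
My plan is to deduce both parts from the additivity of $\wt$ over the tree $T_B$, using that horizontal divisors are essentially sections.

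\textbf{Key observation.} Every irreducible horizontal divisor $H \neq \overline{\{\infty\}}$ appearing in $(f)$ on $\mathrm{Bl}_n(\P^1_R)$ is the strict transform of some $(x-b_i)$ on $\P^1_R$. Since $\mathrm{Bl}_n(\P^1_R) \to \P^1_R$ is a composition of blow-ups at closed points of the special fiber, the map restricts to an isomorphism above the generic point and above all but finitely many closed points of the target special fiber; consequently each such $H$ is a section of $\mathrm{Bl}_n(\P^1_R) \to S$ and meets the special fiber at exactly one closed point, which lies on exactly one irreducible component of $(\mathrm{Bl}_n(\P^1_R))_s$.

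\textbf{Additivity of $\wt$.} The vertex set of $T_v$ is the disjoint union $\{v\} \sqcup \bigsqcup_{w \in C(v)} V(T_w)$. By the key observation, the horizontal divisors (other than $\overline{\{\infty\}}$) appearing in $(f)$ are partitioned according to which component of $(\mathrm{Bl}_n(\P^1_R))_s$ they meet. Counting those that meet components indexed by $V(T_v)$ gives
\[
\wt_v \;=\; l'_v \;+\; \sum_{w \in C(v)} \wt_w.
\]

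\textbf{Proof of (a) and (b).} For each $w \in C(v)$, Lemma~\ref{weight} gives $\wt_w \geq 2$, and of course $\wt_w \geq 3$ whenever $\wt_w$ is odd. Splitting the sum into odd-weight and even-weight children and using $\#\{w \in C(v): \wt_w \text{ odd}\} = r_v$, $\#\{w \in C(v) : \wt_w \text{ even}\} = s_v$ yields
\[
\wt_v \;\geq\; l'_v + 3r_v + 2s_v \;=\; l_v + 2r_v + 2s_v \;\geq\; l_v + 2s_v,
\]
which is (a). For (b), if $r_v = s_v = 0$ then $C(v) = \emptyset$ and the additivity formula immediately gives $\wt_v = l'_v$.

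I expect no substantive obstacle: the only point that deserves care is justifying that horizontal divisors in $(f)$ meet the special fiber in a single point on a single component, which follows formally from the construction of $\mathrm{Bl}_n(\P^1_R)$ as an iterated blow-up of $\P^1_R$ at closed points.
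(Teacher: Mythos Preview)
Your proof is correct and follows essentially the same route as the paper: both establish the additivity identity $\wt_v = l'_v + \sum_{w \in C(v)} \wt_w$, then bound each summand using Lemma~\ref{weight} and the parity observation $\wt_w \geq 3$ when $\wt_w$ is odd. The only difference is that you spell out why each horizontal divisor meets a unique component of the special fiber, whereas the paper simply cites ``the definitions of $l'_v$ and $\wt$'' for the additivity formula.
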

\begin{proof} \hfill
\begin{enumerate}
 \item Suppose $u \in C(v)$. Lemma~\ref{weight} tells us that $\wt_u \geq 2$. If $u$ is of odd weight, then $\wt_u \geq 3$.   Therefore
\begin{align*}
 \wt_v &= l'_v + \sum_{u \in C(v)} \wt_u \ \ \ \ (\textup{by the definitions of $l'_v$ and $\wt$}) \\
 &\geq l'_v + {\sum_{\substack{u \in C(v) \\ \wt_u \ \textup{is odd}}}} 3 + {\sum_{\substack{u \in C(v) \\ \wt_u \ \textup{is even}}}} 2 \\
 &\geq l'_v + 3r_v + 2s_v \\
 &= l_v+2r_v+2s_v \\
 &\geq l_v+2s_v. 
\end{align*}
\item If $r_v = s_v = 0$, then $C(v) = \emptyset$ and therefore $\wt_v = l'_v + \sum_{u \in C(v)} \wt_u = l'(v)$. \qedhere
\end{enumerate}
\end{proof}

We are now ready to compare the two discriminants. We first compare the local contributions.
\begin{lemma}\label{localinequality}
If $v \in V(T_B)$, then $D''(v) \leq d(v)$. If $v$ is even, then $D''(v) = d(v)$ if and only if every even child of $v$ has weight $2$. If $v$ is odd, then $D''(v) = d(v)$ if and only if either $\wt_v = 2$ or $\wt_v = 3$ and $v$ has no even children.
\end{lemma}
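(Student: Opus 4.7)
The plan is to substitute the formulae for $d(v)$ (Lemma~\ref{formulaford}) and $D'(v)$ (Equation~(\ref{formulaforD'})) into $d(v) - D''(v)$ and bound the result, using Lemma~\ref{weightls} and the trivial identity $x(x-1) \ge 2x$ for $x \ge 3$. In the even case, the contribution of the weight-odd children cancels between $d(v)$ and $D'(v) = D''(v)$, leaving
\[ d(v) - D''(v) \;=\; \sum_{\substack{w \in C(v) \\ \wt_w \text{ even}}} \bigl(\wt_w(\wt_w - 1) - 2\bigr). \]
By Lemma~\ref{weight} each weight-even child has weight $\ge 2$, so each summand is non-negative with equality iff $\wt_w = 2$, and by Lemma~\ref{oddeven} the weight-even children of an even vertex are precisely the vertex-even children; this yields the inequality and equality criterion in the even case.

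For odd $v$ with $\wt_v \ge 3$, the analogous cancellation together with $l_v = l'_v + r_v$ produces
\[ d(v) - D''(v) \;=\; \wt_v(\wt_v - 1) + \sum_{\substack{w \in C(v) \\ \wt_w \text{ odd}}} \wt_w(\wt_w - 1) \;-\; 2\bigl(l_v + s_v + \#L_v\bigr). \]
I then plan to chain the inequalities
\[ l_v + s_v + \#L_v \;\le\; l'_v + r_v + 2s_v \;\le\; l'_v + 3r_v + 2s_v \;\le\; \wt_v, \]
where the first step uses $\#L_v \le s_v$ (immediate from the definition of $L_v$, since a weight-$2$ child is weight-even) and the last is Lemma~\ref{weightls}(a). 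Combined with $\wt_v(\wt_v-1) \ge 2\wt_v$, this gives $\wt_v(\wt_v-1) \ge 2(l_v + s_v + \#L_v)$, and dropping the non-negative weight-odd sum yields $d(v) - D''(v) \ge 0$.

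It remains to treat the odd case $\wt_v = 2$ separately: Lemma~\ref{weightls}(a) forces $r_v = 0$ and $l'_v + 2s_v \le 2$, so the only possibilities are $(l'_v, s_v) = (2, 0)$ with $v$ an odd leaf and $(l'_v, s_v) = (0, 1)$ with a single weight-$2$ child, and a direct substitution using the piecewise definition of $D''$ gives $d(v) = D''(v)$ in both cases. For equality in the $\wt_v \ge 3$ case, $\wt_v(\wt_v-1) = 2\wt_v$ forces $\wt_v = 3$, the vanishing of the weight-odd sum forces $r_v = 0$ (``no even children'' by Lemma~\ref{oddeven}), and the remaining inequalities in the chain then collapse automatically: the identity $\wt_v = l'_v + \sum_{w \in C(v)} \wt_w$ together with $r_v = 0$ and each $\wt_w \ge 2$ forces $s_v \le 1$ and the unique weight-even child (if any) to have weight $2$, yielding $\#L_v = s_v$. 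The main obstacle I anticipate is bookkeeping: Lemma~\ref{oddeven} swaps vertex-parity and weight-parity when $v$ is odd, so care is required in identifying which children the symbols $r_v$, $s_v$, and $L_v$ count, and the piecewise definition of $D''$ at $\wt_v = 2$ must be handled outside the main chain of inequalities.
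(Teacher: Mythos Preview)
Your proposal is correct and follows essentially the same approach as the paper's proof. Both arguments compute $d(v)-D''(v)$ using Lemma~\ref{formulaford} and Equation~(\ref{formulaforD'}), cancel the contribution of the odd (vertex-parity) children, and in the odd case with $\wt_v\ge 3$ bound the remainder via the chain $\#L_v\le s_v$, $l_v+2s_v\le\wt_v$ (Lemma~\ref{weightls}(a)), and $2\wt_v\le\wt_v(\wt_v-1)$; the only cosmetic difference is that you index sums by the weight-parity of children while the paper uses vertex-parity (the two are related by Lemma~\ref{oddeven}). One small point to tighten in your $\wt_v=2$ case: the inequality $l'_v+2s_v\le 2$ alone does not exclude $(l'_v,s_v)\in\{(0,0),(1,0)\}$; you need the exact identity $\wt_v=l'_v+\sum_{w\in C(v)}\wt_w$ (or equivalently Lemma~\ref{weightls}(b)), which you already invoke later in the $\wt_v=3$ equality analysis.
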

\begin{proof}

If $v \in V(T_B)$ is even, then
\begin{align*}
D''(v) - d(v) &= D'(v) - d(v) \\ 
&= 2s_v + \sum_{\substack{v' \in C(v) \\ v' \ \textrm{odd}}} \wt_{v'}(\wt_{v'}-1) - \sum_{v' \in C(v)} \wt_{v'}(\wt_{v'}-1) \\
& \pushright{(\textup{by Lemma~\ref{formulaford} and Equation~\ref{formulaforD'} })} \\
&= \sum_{\substack{v' \in C(v) \\ v' \ \textrm{even}}} \left( 2 - \wt_{v'}(\wt_{v'}-1)  \right) \ \ \ \ \ \ \ \ \ (\mathrm{by \ Lemma \ } {\ref{oddeven}}) \\
&\leq 0 \ \ \ \ (\textup{by Lemma~\ref{weight} }).
\end{align*}

From this, it follows that if $v$ is even, then $D''(v) = d(v)$ if and only if the inequality above is actually an equality, that is, if and only if every even child of $v$ has weight $2$.

From now on assume $v \in V(T_B)$ is odd. Then
\begin{align}\label{D'minusd}
D'(v) - d(v) 
&= 2 \big(l_v+s_v \big) -  \wt_v(\wt_v-1) + \sum_{\substack{v' \in C(v) \\ v' \ \textrm{odd}}} \wt_{v'}(\wt_{v'}-1) - \sum_{v' \in C(v)} \wt_{v'}(\wt_{v'}-1) \\
&= 2 \big(l_v+s_v \big) -  \wt_v(\wt_v-1) - \sum_{\substack{v' \in C(v) \\ v' \ \textrm{even}}} \wt_{v'}(\wt_{v'}-1) \nonumber,  
\end{align}
where the first equality follows from Lemma~\ref{formulaford} and Equation~\ref{formulaforD'}. Lemma~\ref{weight} tells us that $\wt_v \geq 2$. We will handle vertices with $\wt_v = 2$ and with $\wt_v \geq 3$ separately.

Suppose $\wt_v = 2$. Lemma~\ref{weightls}(a) implies that $l'_v + 3r_v + 2s_v \leq \wt_v = 2 $. This implies that $r_v = 0$. Lemma~\ref{weightls}(b) implies that either 
\begin{enumerate}[(i)]
 \item $l'_v = 2$ and $s_v = 0$, or,
 \item $l'_v = 0$ and $s_v = 1$.
\end{enumerate}
In both cases, since $r_v = 0$ and $v$ is odd, Lemma~\ref{oddeven} tells us that 
\[ \sum_{\substack{v' \in C(v) \\ v' \ \textrm{even}}} \wt_{v'}(\wt_{v'}-1) = 0 .\]
In case (i), we have that $v$ is an odd leaf of weight $2$ and 
\begin{align*}
 D''(v) - d(v) &= D'(v) - d(v) - 2 \\
 &= 2 \big(l_v+s_v \big) - \wt_v(\wt_v-1) - \sum_{\substack{v' \in C(v) \\ v' \ \textrm{even}}} \wt_{v'}(\wt_{v'}-1) - 2 \\
 &= 2(2+0)-2(2-1)+0-2 \\
 &= 0. 
\end{align*}
In case (ii), we have that $v$ is not a leaf and $\wt_v = 2$ and 
\begin{align*}
D''(v) - d(v) &= D'(v)-d(v) \\
&= 2 \big(l_v+s_v \big) - \wt_v(\wt_v-1) - \sum_{\substack{v' \in C(v) \\ v' \ \textrm{even}}} \wt_{v'}(\wt_{v'}-1) \\
&= 2(0+1)-2(2-1)-0 \\
&= 0. 
\end{align*}

Now suppose $\wt_v \geq 3$. By definition, $\# L_v \leq s_v$. 
\begin{align}\label{weight3}
 2\# L_v + 2 \big(l_v+s_v \big) - \wt_v(\wt_v-1)  &\leq 2 \big(l_v+2s_v \big) -  \wt_v(\wt_v-1)  \\
 &\leq 2\wt_v-\wt_v(\wt_v-1) \ \ \ \ (\textup{by Lemma~\ref{weightls}(a)}) \nonumber \\
 &= \wt_v(3-\wt_v) \nonumber \\
 &\leq 0 \nonumber .
\end{align}
 
This implies that
\begin{align*}
 D''(v) - d(v) &= D'(v)-d(v)+2\# L_v \\
 &= 2 \big(l_v+s_v \big) -  \wt_v(\wt_v-1)  - \left( \sum_{\substack{v' \in C(v) \\ v' \ \textrm{even}}} \wt_{v'}(\wt_{v'}-1) \right) + 2\# L_v \\
 & \pushright{(\textup{by Equation~\ref{D'minusd}} )} \\
 &\leq  - \sum_{\substack{v' \in C(v) \\ v' \ \textrm{even}}} \wt_{v'}(\wt_{v'}-1) \ \ \ \ (\textup{by Equation~\ref{weight3}} )  \\
 &\leq 0 \ \ \ \ (\textup{by  Lemma  }\ref{weight}). 
\end{align*}

If $v$ is odd and $D''(v) = d(v)$, then either $\wt_v = 2$ or $\wt_v = 3$ and $r_v = 0$ and $\# L_v = s_v$. By Lemma~\ref{oddeven}, $r_v = 0$ if and only if $v$ has no even children. Since every child of $v$ has weight aleast $2$ and has weight bounded above by $\wt_v = 3$, Lemma~\ref{oddeven} tells us that $\#L_v = s_v$.
\end{proof}


We are now ready to prove the main theorem.
\begin{proof}[Proof of Theorem~\ref{main}]
Construct the proper regular model $X$ as above. Let $n(X)$ denote the number of irreducible components of the special fiber of $X$ and let $n$ be the number of components of the special fiber of the minimal proper regular model $\mathcal{X}$ of $C$ . 

To prove $-\Art(X/S) \leq \nu(\Delta)$, sum the inequality of Lemma~\ref{localinequality} over all vertices of $T_B$ and use Lemmas~\ref{newsum} ~\ref{newnewsum}.

We have the equalities 
\[ -\Art(X/S) = n(X)-1+\tilde{f} \]
\[ -\Art(\mathcal{X}/S) = n-1+\tilde{f} \]
where $\tilde{f}$ is the conductor of the $\ell$-adic representation $\Gal (\overline{K}/K) \rightarrow \Aut_{\Q_\ell} \ (H^1_{\mathrm{et}}(X_{\overline{\eta}}, \Q_\ell))$ \cite[p.53, Proposition 1]{liup}. 
The minimal proper regular model can be obtained by blowing down some subset (possibly empty) of irreducible components of the special fiber of $X_s$, so $n \leq n(X)$. 

Putting everything together, we get
\[ -\Art(\mathcal{X}/S) \leq -\Art(X/S) \leq \nu(\Delta) .\]
 
\end{proof}

\begin{remark}
 \textup{Lemma~\ref{localinequality} and the proof of Theorem~\ref{main} tell us that $-\Art(\mathcal{X}/S) = \nu(\Delta)$ if and only if the model $X$ is already minimal and the tree $T_B$ satisfies certain strict conditions. Call a subset $S$ of vertices of $T_B$ a connecting chain if 
 \begin{itemize}
  \item for any $v \in V(T_B)$, if $v$ lies in the path between two vertices of $S$, then $v \in S$, and,
  \item every vertex in $S$ has exactly two neighbours in $T_B$.
 \end{itemize}                                                                                                                                                                                                                                                                              
 If $-\Art(X/S) = \nu(\Delta)$, then the conditions on the tree $T_B$ tell us that if we replace every connecting chain of $3$ or more vertices with a chain of $2$ vertices (or equivalently, disregard the length of the chains in $T_B$ and just consider the underlying topological space of $T_B$), then the tree $T_B$ has height at most $2$ (that is, the path from any vertex to the root has at most one other vertex), and all children of the root have at most $3$ neighbours. The model $X$ is not minimal if and only if it has contractible $-1$ curves, and this happens if and only if the tree $T_B$ has an odd vertex $v$ such that $l'_v = 0$, $v$ has an even parent, and $v$ has exactly one child, and that child is even.}
\end{remark}

\begin{corollary}\label{number}
Let $n$ be the number of components of the special fiber of the minimal proper regular model of $C$ over $R$. Then,
 \[ n \leq \nu(\Delta) + 1 .\] 
\end{corollary}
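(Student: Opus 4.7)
The plan is to deduce this corollary directly from Theorem~\ref{main} together with the decomposition of the Artin conductor recalled in the proof of the main theorem. Recall that for the minimal proper regular model $\mathcal{X}$ we have the identity
\[ -\Art(\mathcal{X}/S) = n - 1 + \tilde{f}, \]
where $n$ is the number of irreducible components of the special fiber of $\mathcal{X}$ and $\tilde{f}$ is the conductor of the $\ell$-adic Galois representation on $H^1_{\mathrm{et}}(\mathcal{X}_{\overline{\eta}}, \mathbb{Q}_\ell)$. The key observation is that $\tilde{f}$ is a non-negative integer, being the sum of the tame part (the codimension of inertia invariants, which is non-negative) and the Swan conductor (which is itself non-negative).

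First I would invoke Theorem~\ref{main} to obtain the inequality $-\Art(\mathcal{X}/S) \leq \nu(\Delta)$. Next I would substitute the formula above to rewrite this as
\[ n - 1 + \tilde{f} \leq \nu(\Delta). \]
Rearranging gives $n \leq \nu(\Delta) + 1 - \tilde{f}$, and since $\tilde{f} \geq 0$ we conclude $n \leq \nu(\Delta) + 1$.

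There is essentially no obstacle here: the corollary is a formal consequence of the main theorem and the non-negativity of $\tilde{f}$. The only small point worth spelling out is the non-negativity of $\tilde{f}$, which follows because $\tilde{f}$ decomposes as a sum of a dimension count and a Swan conductor, both of which are non-negative integers; this is standard and is implicitly used in the proof of Theorem~\ref{main} where the two displayed equalities for $-\Art(X/S)$ and $-\Art(\mathcal{X}/S)$ involve the same $\tilde{f}$. No further combinatorial analysis of the tree $T_B$ or of the model $X$ is needed.
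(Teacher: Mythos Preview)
Your argument is correct and is essentially identical to the paper's own proof: both use the identity $-\Art(\mathcal{X}/S)=n-1+\tilde{f}$, invoke Theorem~\ref{main}, and then drop the nonnegative term $\tilde{f}$. The only difference is that you spell out why $\tilde{f}\geq 0$, whereas the paper simply asserts it.
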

\begin{proof}
Since the conductor $\tilde{f}$ is a nonnegative integer, $n-1 \leq n-1+\tilde{f} \leq \nu(\Delta)$.
\end{proof}

\begin{remark}
 \textup{The equality $n=\nu(\Delta)+1$ holds if and only if $\tilde{f} = 0$ in addition to all the conditions for $-\Art(\mathcal{X}/S) = \nu(\Delta)$ to hold. By the N\'{e}ron-Ogg-Shafarevich criterion, $\tilde{f} = 0$ if and only if the Jacobian of $C$ has good reduction.}
\end{remark}

\section*{Acknowledgements}

I would like to thank my advisor, Bjorn Poonen, for suggesting this problem to me. I would also like to thank him for several helpful conversations, for his careful reading of the manuscript and for all the suggestions for improving the exposition.

\begin{bibdiv}
\begin{biblist}


\bib{blr}{book}{
   author={Bosch, Siegfried},
   author={L{\"u}tkebohmert, Werner},
   author={Raynaud, Michel},
   title={N\'eron models},
   series={Ergebnisse der Mathematik und ihrer Grenzgebiete (3) [Results in
   Mathematics and Related Areas (3)]},
   volume={21},
   publisher={Springer-Verlag, Berlin},
   date={1990},
   pages={x+325},
   isbn={3-540-50587-3},
   review={\MR{1045822 (91i:14034)}},
   doi={10.1007/978-3-642-51438-8},
}

\bib{kausz}{article}{
   author={Kausz, Ivan},
   title={A discriminant and an upper bound for $\omega^2$ for
   hyperelliptic arithmetic surfaces},
   journal={Compositio Math.},
   volume={115},
   date={1999},
   number={1},
   pages={37--69},
   issn={0010-437X},
   review={\MR{1671741 (2000e:14033)}},
   doi={10.1023/A:1000580901251},
}

\bib{liu}{book}{
   author={Liu, Qing},
   title={Algebraic geometry and arithmetic curves},
   series={Oxford Graduate Texts in Mathematics},
   volume={6},
   note={Translated from the French by Reinie Ern\'e;
   Oxford Science Publications},
   publisher={Oxford University Press, Oxford},
   date={2002},
   pages={xvi+576},
   isbn={0-19-850284-2},
   review={\MR{1917232 (2003g:14001)}},
}

\bib{liup}{article}{
   author={Liu, Qing},
   title={Conducteur et discriminant minimal de courbes de genre $2$},
   language={French},
   journal={Compositio Math.},
   volume={94},
   date={1994},
   number={1},
   pages={51--79},
   issn={0010-437X},
   review={\MR{1302311 (96b:14038)}},
}

\bib{liulor}{article}{
   author={Liu, Qing},
   author={Lorenzini, Dino},
   title={Models of curves and finite covers},
   journal={Compositio Math.},
   volume={118},
   date={1999},
   number={1},
   pages={61--102},
   issn={0010-437X},
   review={\MR{1705977 (2000f:14033)}},
   doi={10.1023/A:1001141725199},
}

\bib{lor}{article}{
   author={Lorenzini, Dino J.},
   title={Groups of components of N\'eron models of Jacobians},
   journal={Compositio Math.},
   volume={73},
   date={1990},
   number={2},
   pages={145--160},
   issn={0010-437X},
   review={\MR{1046735 (92d:14019)}},
}
		
\bib{namiueno}{article}{
   author={Namikawa, Yukihiko},
   author={Ueno, Kenji},
   title={The complete classification of fibres in pencils of curves of
   genus two},
   journal={Manuscripta Math.},
   volume={9},
   date={1973},
   pages={143--186},
   issn={0025-2611},
   review={\MR{0369362 (51 \#5595)}},
}

\bib{poonenstoll}{article}{
   author={Poonen, Bjorn},
   author={Stoll, Michael},
   title={Most odd degree hyperelliptic curves have only one rational point},
   journal={Ann. of Math. (2)},
   volume={180},
   date={2014},
   number={3},
   pages={1137--1166},
   issn={0003-486X},
   review={\MR{3245014}},
   doi={10.4007/annals.2014.180.3.7},
}

\bib{saito1}{article}{
   author={Saito, Takeshi},
   title={Vanishing cycles and geometry of curves over a discrete valuation
   ring},
   journal={Amer. J. Math.},
   volume={109},
   date={1987},
   number={6},
   pages={1043--1085},
   issn={0002-9327},
   review={\MR{919003 (88h:14036)}},
   doi={10.2307/2374585},
}
		
\bib{saito2}{article}{
   author={Saito, Takeshi},
   title={Conductor, discriminant, and the Noether formula of arithmetic
   surfaces},
   journal={Duke Math. J.},
   volume={57},
   date={1988},
   number={1},
   pages={151--173},
   issn={0012-7094},
   review={\MR{952229 (89f:14024)}},
   doi={10.1215/S0012-7094-88-05706-7},
}

\end{biblist}
\end{bibdiv}

\end{document}